\tikzset{>=latex}
\newcommand{\A}{\mathbb{A}}
\theoremstyle{plain} \newtheorem{thm}{Theorem}[section]
\newtheorem{prop}[thm]{Proposition}
\newtheorem{lem}[thm]{Lemma}
\newtheorem{cor}[thm]{Corollary}
\theoremstyle{definition} \newtheorem{defn}[thm]{Definition}
\theoremstyle{remark} \newtheorem*{rk}{Remark}
\theoremstyle{plain} \newtheorem{claim}{Claim}
\theoremstyle{plain} 
\newenvironment{claimproof} {
  \begin{proof}[Proof of claim]
  
  } {
  \end{proof}
  }
\DeclareMathOperator{\Sg}{Sg}
\DeclareMathOperator{\Con}{Con}
\DeclareMathOperator{\Clo}{Clo}
\numberwithin{equation}{section}  % number equations within sections
\renewcommand{\phi}{\varphi}
\renewcommand{\epsilon}{\varepsilon}
\theoremstyle{definition} 
\theoremstyle{remark} 
\DeclareMathOperator{\cube}{cube}
\DeclareMathOperator{\Line}{line}
\DeclareMathOperator{\faces}{Faces}
\DeclareMathOperator{\glue}{Glue}
\DeclareMathOperator{\tc}{TC}
\DeclareMathOperator{\cut}{Cut}
\DeclareMathOperator{\lines}{Lines}
\DeclareMathOperator{\squares}{Squares}
\DeclareMathOperator{\rect}{Rect}
\DeclareMathOperator{\refl}{Refl}
\DeclareMathOperator{\sym}{Sym}
\DeclareMathOperator{\tol}{Tol}
\DeclareMathOperator{\com}{Com}
\DeclareMathOperator{\fin}{fin}
\DeclareMathOperator{\SD}{SD}
\DeclareMathOperator{\CuBe}{Cube}
\newcommand{\rrot}[3]{{}^{{#1}}{\mathrm{rot}^{#2}_{#3}}}
\newcommand{\ccom}[2]{{\mathrm{cmp}^{#1}_{#2}}}
\def\nat{\mathbb{N}}
\def\A{\mathbb{A}}
\def\var{\mathcal{V}}
\def\Meet{\bigwedge}
\def\Union{\bigcup}
\def\meet{\wedge}
\def\join{\vee}
\def\union{\cup}
\def\finsub{\subseteq_{\fin}}
\def\sd{\SD\text(\meet)}
\tikzset{myStyle/.style={baseline=(center.base), font=\small,
    every node/.style={inner sep=0.25em} }}
\NewDocumentCommand{\LinePic}{ O{} O{} O{1} }{ % {{{
  \begin{tikzpicture}[myStyle, scale=#3*1 ]
    \node (center) at (0,0.5) {\phantom{$\cdot$}}; % black magic
    \path (0,0)  node (s) {$#1$}
        ++(0,1)  node (n) {$#2$};
    \draw (n) -- (s);
  \end{tikzpicture}
}  % }}}
\newcommand{\SquareUnwrapped}[4]{ % {{{
  \node (center) at (0.5,-0.5) {\phantom{$\cdot$}}; % black magic
  \path (0,0)  node (nw) {$#2$}
      ++(1,0)  node (ne) {$#4$}
      ++(0,-1) node (se) {$#3$}
      ++(-1,0) node (sw) {$#1$};
  \draw (nw) -- (ne) -- (se) -- (sw) -- (nw);
}  % }}}
\NewDocumentCommand{\SquareXY}{ O{} O{} O{} O{} O{1} O{1} }{ % {{{
  \begin{tikzpicture}[myStyle, xscale=#5*1, yscale=#6*1 ]
    \SquareUnwrapped{#1}{#2}{#3}{#4}
  \end{tikzpicture}
}  % }}}
\NewDocumentCommand{\Square}{ O{} O{} O{} O{} O{1} }{ % {{{
  \SquareXY[#1][#2][#3][#4][#5][#5]
}  % }}}
\NewDocumentCommand{\SquareAxes}{ O{} O{} O{1} O{0} O{1} }{  % {{{
  \begin{tikzpicture}[myStyle, scale=#3*0.8] % scale to align with squares
    \node (center) at (0.5,0.5) {\phantom{$\cdot$}}; % black magic
    \draw (0,0) -- node[above]{$#1$} (1,0) node[right]{$#4$}
      (0,0) -- node[left]{$#2$} (0,1) node[above]{$#5$};
  \end{tikzpicture}
}  % }}}
\NewDocumentCommand{\CubeAxes}{ O{} O{} O{} O{1} O{0} O{1} O{2} }{  % {{{
  \begin{tikzpicture}[myStyle, scale=#4*0.85] % scale to align with cubes
    \node (center) at (0.5,0.75) {\phantom{$\cdot$}}; % black magic
    \draw (0,0) -- node[above]{$#1$} (1,0) node[right]{$#5$}
      (0,0) -- node[left]{$#2$} (0,1) node[above]{$#6$}
      (0,0) -- node[below left=-0.25em]{$#3$} (0.5,-0.5) node[below right=-0.2em]{$#7$};
  \end{tikzpicture}
}  % }}}
\newcommand{\CubeNodes}[8]{  % {{{
  \node at (0.75,-0.75) (center) {\phantom{$\cdot$}}; % black magic
  \path (0,0)  node (back_nw)      {$#2$}
      ++(1,0)  node (back_ne)      {$#4$}
      ++(0,-1) node (back_se)      {$#3$}
      ++(-1,0) node (back_sw)      {$#1$}
        (0.5,-0.5) node (front_nw) {$#6$}
      ++(1,0)  node (front_ne)     {$#8$}
      ++(0,-1) node (front_se)     {$#7$}
      ++(-1,0) node (front_sw)     {$#5$};
}  % }}}
\newcommand{\CubeUnwrapped}[8]{ % {{{
  \CubeNodes{#1}{#2}{#3}{#4}{#5}{#6}{#7}{#8}
  \draw (back_nw) -- (back_ne) -- (back_se) -- (back_sw) -- (back_nw)
    (front_nw) -- (front_ne) -- (front_se) -- (front_sw) -- (front_nw)
    (back_nw) -- (front_nw)
    (back_ne) -- (front_ne)
    (back_se) -- (front_se)
    (back_sw) -- (front_sw);
}  % }}}
\newcommand{\CubeDUnwrapped}[8]{ % {{{
  \CubeNodes{#1}{#2}{#3}{#4}{#5}{#6}{#7}{#8}
  \draw (front_nw) -- (front_ne) -- (front_se) -- (front_sw) -- (front_nw)
    (back_nw) -- (back_ne) (back_sw) -- (back_nw)
    (back_nw) -- (front_nw)
    (back_ne) -- (front_ne)
    (back_sw) -- (front_sw);
  \draw[densely dotted] (back_ne) -- (back_se) -- (back_sw)
    (back_se) -- (front_se);
}  % }}}
\NewDocumentCommand{\Cube}{ O{} O{} O{} O{} O{} O{} O{} O{} O{1} }{  % {{{
  \begin{tikzpicture}[myStyle, scale=#9*1 ]
    \CubeUnwrapped{#1}{#2}{#3}{#4}{#5}{#6}{#7}{#8}
  \end{tikzpicture}
}  % }}}
\NewDocumentCommand{\CubeDeep}{ O{} O{} O{} O{} O{} O{} O{} O{} O{1}  }{  % {{{
  \begin{tikzpicture}[myStyle, xscale=#9*1, yscale=1.5 ]
    \CubeUnwrapped{#1}{#2}{#3}{#4}{#5}{#6}{#7}{#8}
  \end{tikzpicture}
}  % }}}
\NewDocumentCommand{\CubeD}{ O{} O{} O{} O{} O{} O{} O{} O{} O{1} }{  % {{{
  \begin{tikzpicture}[myStyle, scale=#9*1 ]
    \CubeDUnwrapped{#1}{#2}{#3}{#4}{#5}{#6}{#7}{#8}
  \end{tikzpicture}
}  % }}}
\NewDocumentCommand{\DeltaZeroCubeD}{ O{} O{} O{} O{} O{} O{} O{} O{} O{1} }{  % {{{
  \begin{tikzpicture}[myStyle, scale=#9*1]
    \CubeDUnwrapped{#1}{#2}{#3}{#4}{#5}{#6}{#7}{#8}
    \draw (back_sw)  to[out=30,in=180-30] (back_se)
      (back_nw)  to[out=30,in=180-30] node[auto]{$\delta$} (back_ne)
      (front_sw) to[out=30,in=180-30] (front_se);
    \draw[dashed] (front_nw) to[out=30,in=180-30] (front_ne);
  \end{tikzpicture}
}  % }}}
\NewDocumentCommand{\DeltaOneCubeD}{ O{} O{} O{} O{} O{} O{} O{} O{} O{1} }{  % {{{
  \begin{tikzpicture}[myStyle, scale=#9*1]
    \CubeDUnwrapped{#1}{#2}{#3}{#4}{#5}{#6}{#7}{#8}
    \draw (back_sw)  to[out=120,in=240]node[left]{$\delta$} (back_nw)
      (back_se)  to[out=120,in=240]  (back_ne)
      (front_sw) to[out=120,in=240] (front_nw);
  \end{tikzpicture}
}  % }}}
\NewDocumentCommand{\DeltaTwoCubeD}{ O{} O{} O{} O{} O{} O{} O{} O{} O{1} }{  % {{{
  \begin{tikzpicture}[myStyle, scale=#9*1]
    \CubeDUnwrapped{#1}{#2}{#3}{#4}{#5}{#6}{#7}{#8}
    \draw (back_sw) to[out=180+30,in=180] node[auto,swap]{$\delta$} (front_sw)
      (back_se) to[out=0,in=30] (front_se)
      (back_nw) to[out=180+30,in=180] (front_nw);
    \draw[dashed] (back_ne) to[out=0,in=30] (front_ne);
  \end{tikzpicture}
}  % }}}
\DeclareMathOperator{\M}{\text{M}}  % for matrix relations
\begin{document}
% title and abstract {{{1
\title{Supernilpotent Taylor Algebras are Nilpotent}
\author{ Andrew Moorhead}

\address[Andrew Moorhead]{
  Department of Mathematics;
  Vanderbilt University;
  Nashville, TN;
  U.S.A.}
\email[Andrew Moorhead]{andrew.p.moorhead@vanderbilt.edu}

\thanks{ This work was supported by the National Science
  Foundation grant no.\ DMS 1500254 and the Austrian Science Fund (FWF):P29931}

\begin{abstract}
We develop the theory of the higher commutator for Taylor varieties. A new higher commutator operation called the hypercommutator is defined using a type of invariant relation called a higher dimensional congruence. The hypercommutator is shown to be symmetric and satisfy an inequality relating nested terms. For a Taylor algebra the term condition higher commutator and the hypercommutator are equal when evaluated at a constant tuple, and it follows that every supernilpotent Taylor algebra is nilpotent. We end with a characterization of  congruence meet-semidistributive varieties in terms of the  neutrality of the higher commutator.

\end{abstract} 
\maketitle % }}}1

\section{Introduction} \label{sec:intro} % {{{1

In this article we study centrality conditions for general algebras. Our goal is to further develop the theory of a congruence lattice operation called the higher commutator, which is a higher arity generalization of the binary commutator. Higher commutator operations are significant because they are used to detect structure that cannot be described with nested binary commutators. An important example is the distinction between nilpotence, which is a condition that is defined using nested binary commutators, and supernilpotence, which is a condition defined using the higher commutator. Until recently, it was not known if supernilpotent algebras are necessarily nilpotent. The answer in general is no \cite{mooremoorhead}. However, we prove here that if a supernilpotent algebra satisfies a nontrivial idempotent equational condition, then the answer is yes. An interesting byproduct of the proof we give is an elementary theory of what we call a higher dimensional congruence.

% In the seventies, Smith discovered a language independent congruence lattice operation for Mal'cev varieties of algebras that interprets as the classical commutator in each of many well known classes, e.g.\ groups, rings, and Lie algebras \cite{jdhsmith}. Smith defined the commutator using the language of category theory. An aspect of the structure enforced by a Mal'cev operation is that any `reasonable' definition of the commutator will reproduce Smith's commutator. Hagemann and Herrmann extended these ideas to the context of modular varieties. Their development was purely algebraic \cite{hh} and led to the definition of what is now called the \emph{term condition} commutator. 
 
We begin with a broad outline of the ideas underlying the results of this paper. In 1954 Mal'cev \cite{malcevterm} observed that a variety of algebras $\var$ has permuting congruences exactly when there is a $\var$-term $q$ satisfying the identities
\[
q(x,x,y) \approx q(y,x,x) \approx x.
\]
A term satisfying these identities is called a Mal'cev operation. His discovery initiated a continuing line of research into the relationship between algebraic structure and equational conditions. Indeed, many important structural features (e.g.,\ congruence modularity, congruence distributivity, etc.) are now known to be enforced by equational laws. The strength of a particular condition may be measured by its position in what is called the lattice of interpretability types of varieties \cite{garciataylor}. The collection of all idempotent equational conditions forms a sublattice of the interpretability lattice. Taylor observed \cite{taylortermhomlaws} that any idempotent variety that does not interpret into the bottom element of this sublattice must have a term satisfying a certain generic package of identities (see the beginning of Section \ref{sec:higherarities}). Such a term is called a Taylor term. 

Taylor terms have received a lot of attention recently because of their connection to the Constraint Satisfaction Problem. The CSP Dichotomy Conjecture has been independently confirmed by Bulatov \cite{bulatovdich} and Zhuk \cite{zhukdich}. Roughly, each proof demonstrates that if the algebra of operations that preserve a set of finitary relations $\mathcal{R}$ has a Taylor term, then there is a polynomial time algorithm that decides the CSP for $\mathcal{R}$. Using some of algebraic theory that came out of investigating the CSP, Ol\v{s}\'ak recently proved that any package of Taylor identities force the existence of a particular $6$-ary Taylor term. The results of this article establish that the condition of having a Taylor term has strong consequences for the behavior of higher commutators.

The commutator establishes a useful connection between the possible configurations of an algebra's invariant relations and its clone of polynomial operations. Smith was the first to articulate such a connection. Using the language of category theory, he developed a signature independent commutator for Mal'cev varieties that interprets as the classical commutator in each of many well known classes, e.g.,\ groups, rings, and Lie algebras \cite{jdhsmith}. Smith's idea is a particularly nice example of the kind of insight a study of general algebra provides. The basic operations of an algebra can be thought of as instructions for building structure, and the same structure can be produced in different ways (e.g.,\ a group can be specified in the standard way or as an algebra with a single division operation.) The invariant relations of an algebra are indifferent to the manner in which they are generated, and therefore are the natural place to look for a structural definition of centrality. The language specific definitions of abelianness, solvability, and nilpotence for a particular variety are then consequences of this definition. The success of this viewpoint is demonstrated by Herrmann's celebrated classification of the abelian algebras belonging to a modular variety as exactly those algebras that are polynomially equivalent to a module \cite{heraffine}.

Hagemann and Herrmann were the first to extend Smith's commutator beyond the domain of Mal'cev varieties. Their development avoided category theory \cite{hh} and led to the definition of what is now called the \emph{term condition} commutator. While the term condition is independent of signature, it is nevertheless a syntactic condition. Freese and McKenzie study commutators for modular varieties in \cite{fm}. One of their early conclusions is that all `reasonable' definitions of a commutator for a modular variety are equivalent, and the remainder of the theory developed in the text favors the term condition commutator. A contrasting development of the modular commutator is found in Gumm's book \cite{gumm}, where the development of the modular commutator is guided by geometrical intuition. 

The term condition commutator for a variety that is not modular need not be symmetric, and it follows that two different centrality conditions that are equivalent in the modular case are not equivalent in general. Much is known in spite of this difficulty. In \cite{kearnesszendreirel}, Kearnes and Szendrei prove that the symmetric term condition commutator is equal to the linear commutator for a Taylor variety, and they use this equivalence to prove that any abelian Taylor algebra is polynomially equivalent to a subalgebra of a reduct of a module. We refer the reader to the monograph of Kearnes and Kiss \cite{kearneskiss} for a thorough treatment of the nonmodular binary commutator. 

Bulatov defines a higher arity generalization of the term condition commutator in \cite{buldef}. While for groups and rings Bulatov's higher commutator is term definable from the binary commutator, for other Mal'cev algebras it is not (for example, different expansions of a group may share congruences and binary commutators, but have different higher commutator operations). In
\cite{aichmud}, Aichinger and Mudrinksi develop analogues of those
properties shown to be essential for the binary commutator for the higher
commutator in a Mal'cev variety. In the same paper the higher commutator is used to define a special subclass of nilpotent Mal'cev algebras, which they call \emph{supernilpotent} Mal'cev algebras. Using earlier results of Kearnes \cite{smallfreespec}, they go on to show that the finite members of this class are exactly those algebras that are the product of prime power order nilpotent Mal'cev algebras. Supernilpotence has important connections to the free spectrum of an algebra (see for example \cite{aichfreespec}) and the equation solvability problem (see \cite{IdziakKrz} and\cite{KompatscherSupNil}.)  Equation solvability and related problems emphasize
the need to understand the differences between nilpotence and
supernilpotence.

In \cite{orsalrel}, Opr{\v s}al develops properties of the higher commutator in Mal'cev varieties by establishing a connection between the term condition and certain invariant relations. The theory of the higher commutator has been recently extended to varieties
that are not Mal'cev. In \cite{moorheadHC}, the author extends most
of the theory of the higher commutator to congruence modular varieties.  In
\cite{delta3v}, the author develops a relational description of the
modular ternary commutator and uses this to show that $(2)$-step
supernilpotence implies $(2)$-step nilpotence in a congruence modular
variety. In Wires \cite{Wires}, several properties of higher commutators are
developed outside of the context of congruence modularity. Implicit in the
results of Wires is that supernilpotence implies nilpotence for congruence
modular varieties. More recently, Kearnes and Szendrei have announced that
any \emph{finite} supernilpotent algebra is nilpotent, which is to appear in \cite{finitesupnil}.

In the context of current research into the properties of supernilpotent algebras, the main contribution of this article is indicated by its title. However, the machinery that is developed contributes something to the discussion of what a `good' notion of centrality is. In view of the approach to commutator theory taken here, the term condition can be thought of as a local method to check a global condition corresponding to the hypercommutator. This can be compared to the relationship between a tolerance and a congruence. In a Mal'cev variety, these two kinds of relations are the same, but in general one must take the transitive closure of a tolerance to produce a congruence. This is the $(1)$-dimensional instance of the main idea in this article, which is to extend the notion of transitive closure to a relation that is coordinatized by a hypercube. The success of this local to global principle is determined by the identities in the variety to which it is applied.

The paper is structured as follows. In Section \ref{sec:notation} we state some basic definitions and develop enough machinery to define two commutators, which are
\begin{enumerate}
\item the \emph{term condition commutator}, which is written as
$[\cdot, \dots, \cdot]_{TC} $, and 
\item the \emph{ hypercommutator}, which is written as
$[\cdot, \dots, \cdot]_H.$
\end{enumerate}
In Section \ref{sec:higherarities}, we prove the two main components of the proof that supernilpotent Taylor algebras are nilpotent. We call these components
\begin{enumerate}[leftmargin=2cm]
\item[H=TC:] 
$
[ \theta, \dots, \theta]_H = [\theta, \dots, \theta]_{TC},
$
 where $\theta$ is a congruence of a Taylor algebra $\A$, and 
 \item[HHC8:] for any algebra $\A$, $[\theta_0, \dots, \theta_{m-1}[\theta_m, \dots, \theta_{n-1}]_H]_H \leq [\theta_0, \dots, \theta_{n-1}]_H$, where $(\theta_0, \dots, \theta_{n-1})\in \Con(\A)^n$ (cf.\ HC8 in \cite{aichmud}.)
\end{enumerate}
Section \ref{sec:lowaritycase} is included to illustrate the proof method for few dimensions, and Section \ref{sec:sdmeet} examines the behavior of the hypercommutator in a congruence meet-semidistributive variety.

\section{Basic Concepts}\label{sec:notation}

\subsection{Notation}

We use the following basic notations. It is convenient for us to always think of the natural numbers as the set of all finite ordinals ordered by set membership. This means we will usually write $ i \in n$ instead of $i < n$ and $n$ instead of $\{0, \dots, n-1\}$. We will usually use the notation $f \in B^A$ to indicate that $f \subseteq A \times B$ is a function. We will often (but not always) use subscript notation to indicate images of functions:

\[ \langle a, b \rangle \in f \iff f_a =b.\]
If $Q \subseteq A $, then $f\vert_{Q}$ is the notation we use to restrict $f$ to $Q$.
In case the domain of a function is an interval of natural numbers $\{m, m+1, \dots, n-1\}$ we will also write a function $f \in A^{n\setminus m}$ as the tuple $(f_m, f_{m+1}, \dots, f_{n-1})  $.
\subsection{Cubes}

Let $n \geq 0$. One of the basic objects we study here are relations of arity $2^n$. Such relations inherit the structure of an $n$-dimensional cube. This viewpoint allows us to articulate structural properties that would otherwise remain obscure if we considered relations of arity $2^n$ as unstructured tuples.  

More generally, let $S \finsub \nat$ be a finite set of cardinality $n \geq 0$. An \textbf{$(n)$-dimensional cube} is the graph with vertices belonging to the set of functions $2^S$, with two functions $f,g \in 2^S$ connected by an edge when there is exactly one $i\in S$ such that $f(i) \neq g(i)$. So, a $(0)$-dimensional cube is a single vertex, a $(1)$-dimensional cube is two vertices connected by an edge, and so on. 

%Now, $2^S$ is naturally identified with the universe of the boolean algebra of subsets of $S$. We will make use of the symmetric difference of two subsets, which when identified with indicator functions is defined as 
%\begin{align*}
%+: (2^S)^2 &\to 2^S\\
%(f,g) &\mapsto \{ \langle i, f_i +_2 g_i \rangle : i \in S \}.
%\end{align*}
We name some constants of $2^S$. Denote by $\textbf{i}$ the indicator function that takes value one for $i \in S$ and zero elsewhere. Also, denote by $\textbf{1}$ the function that takes constant value $1$ and $\textbf{0}$ the function that takes constant value $0$. It should be clear what the domain is for these constants from the context in which they are used. 

Now let $A$ be a nonempty set. Formally, every $\gamma \in A^{2^S}$ is a collection of pairs and this collection of pairs inherits the structure of a $(|S|)$-dimensional cube. That is, let

$$ G(\gamma) = \langle \{ \langle f, \gamma_f \rangle  : f\in 2^S \} ; E \rangle $$
be the graph with vertex set $\gamma$, where $ \langle f, \gamma_f \rangle $ is connected by an edge to $\langle g, \gamma_g \rangle $ if and only if $f$ and $g$ are connected in $2^S$. We will call such a graph a \textbf{labeled $(|S|)$-dimensional cube}. The $(|S|)$-dimensional cube $2^S$ is a \textbf{coordinate system} for $\gamma$ and the value $\gamma_f$ is called the \textbf{label} of the function $f \in 2^S$. We will usually not be so formal and refer to $\gamma$ instead of $G(\gamma)$. We denote by \textbf{$\gamma$-pivot} the vertex label $\gamma_{\textbf{1}}$. All other vertex labels are called \textbf{$\gamma$-supporting}. Sometimes we call the $\gamma$-supporting vertex label $\gamma_\textbf{0}$ the \textbf{$\gamma$-antipivot}.

By elementary properties of exponents we may decompose any vertex labeled $(|S|)$-dimensional cube into a cube of cubes. That is, let $Q \subseteq S$ and define the map 

\begin{align*} 
\cut_Q: A^{2^S} &\to(A^{2^{S\setminus Q} } )^{2^Q}\\
\gamma &\mapsto \bigg\{ \big\langle f , \{\langle g , \gamma_{ g \cup f} \rangle : g \in 2^{S\setminus Q} \} \big\rangle  : f \in 2^Q \bigg\}.
\end{align*}
So, $\cut_Q(\gamma)$ is a labeled $(|Q|)$-dimensional cube, where each vertex is labeled by a labeled $(|S \setminus Q|)$-dimensional cube which is called a \textbf{$Q$-cut} of $\gamma$.

It is easy to see that $\cut_Q$ has an inverse, which is defined as 
\begin{align*} 
\glue_Q:(A^{2^{S\setminus Q} } )^{2^Q} &\to A^{2^S} \\
\gamma &\mapsto \bigg\{ \big\langle f , (\gamma_{f\vert_Q})_{f\vert_{S\setminus Q}} \big\rangle  : f \in 2^S \bigg\}. 
\end{align*}
Therefore, every labeled $(n)$-dimensional cube may be represented as a labeled cube of lower dimension, where the vertices of this lower dimensional cube are vertex labeled cubes, and every such cube of cubes may be `glued' back together. It is illustrative to draw pictures of these different representations and we provide some in Figure \ref{fig:4cube}. Note that the labels of some of the vertices are missing to improve readability.  

The $\cut_Q$ with $Q$ such that $|Q| = 1$ or $|S \setminus Q | = 1,2$ are used often enough to merit names: 

\begin{enumerate}
\item $\cut_{\{i\}} $ is called $\faces_i$,
\item $\cut_{S \setminus \{i\} } $ is called $\lines_i$, and
\item $\cut_{S \setminus \{i,j\}}$ is called $\squares_{i,j}$.

\end{enumerate}

\begin{figure}
\centering
\includegraphics[scale=.8]{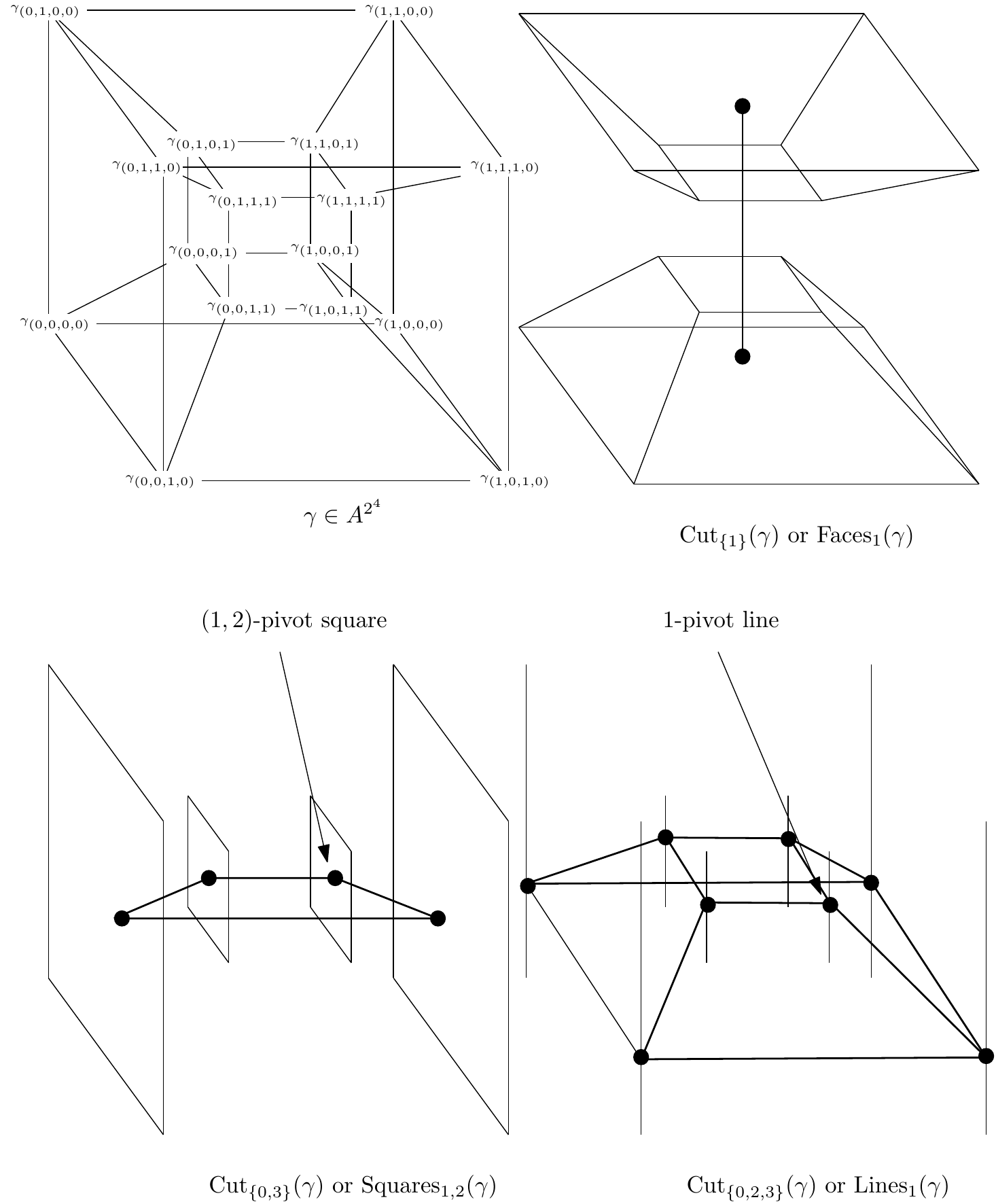}
\caption{Equivalent representations of a labeled $4$-dimensional cube.}\label{fig:4cube}
\end{figure}

Now, let $A$ be a nonempty set and let $R \subseteq A^{2^S}$. In this situation we say that $R$ is a \textbf{$(|S|)$-dimensional relation}. The $(|S|)$-dimensional cube is a coordinate system for $R$ and we think of the elements belonging to $R$ as labeled $(|S|)$-dimensional cubes. If $f \in 2^S$, we use $R_f$ to denote the set $\{\gamma_f: \gamma \in R\}$. 

To make the notation less cumbersome, we adopt the following convention. If $S,T \finsub \nat$ with $|S| = |T|$, then $2^S$ and $2^T$ are isomorphic coordinate systems in the sense  any bijection from $S$ onto $T$ lifts to a graph isomorphism from $2^S$ onto $2^T$. We will often make use of this fact without mentioning it explicitly. 

For example, for $i \in S$ let

$$ \faces_i(R) = \{ \faces_i(\gamma) : \gamma \in R \}$$
be the image of $R$ under $\faces_i$. Now, $\faces_i(R) \subseteq (A^{2^{S \setminus \{i \}}})^{2^{\{i\}}}$ and there is an obvious bijection between $2^{\{i\}}$ and $2$. Therefore, we informally treat $\faces_i(R)$ as a binary relation on the set $A^{2^{S \setminus \{i \}}}$. In this case we will use a superscript to specify a face, i.e.\
\[
\faces_i^0(\gamma) = \faces_i(\gamma)_{\langle i, 0 \rangle}
\qquad \text{and} \qquad
\faces_i^1(\gamma) = \faces_i(\gamma)_{\langle i,1 \rangle}.
\]

Similarly, let $ \gamma \in R$. Because $\lines_i(\gamma) \in (A^{2^{\{i\}}})^{2^{ S\setminus \{i \}}}$, we informally treat it as a $(|S|-1)$-dimensional cube with vertices labeled by elements of $A^2$. We will sometimes refer to the vertex labels of $\lines_i(\gamma)$ as \textbf{$(i)$-cross section lines}. In this case we call $\lines_i(\gamma)_\textbf{1}$ the \textbf{$(i)$-pivot line}, we call $\lines_i(\gamma)_\textbf{0}$ the \textbf{$(i)$-antipivot line}, and we call any $\lines_i(\gamma)_f$ an \textbf{$(i)$-supporting line} when $f \neq \textbf{1}$. 

Continuing along these lines, every $\squares_{i,j}(\gamma) \in (A^{2^{\{i,j\}}})^{2^{S\setminus \{i, j \}}}$ may be treated as a $(|S|-2)$-dimensional cube with vertices labeled by elements of $A^{2^2}$.  We will sometimes refer to the vertex labels of $\squares_{i,j}(\gamma)$ as \textbf{$(i,j)$-cross section squares}. We call $\squares_{i,j}(\gamma)_\textbf{1}$-pivot the \textbf{$(i,j)$-pivot square} of $\gamma$, we call $\squares_{i,j}(\gamma)_\textbf{0}$ the \textbf{$(i,j)$-antipivot square}, and we call $\squares_{i,j}(\gamma)_f$ an \textbf{$(i,j)$-supporting square} when $f\neq \textbf{1}$. 

\textbf{Important convention:} Whenever we draw a square belonging to $\squares_{i,j}(R)$, it is always oriented like this picture of $2^2$
$
\Square[(0,0)][(0,1)][(1,0)][(1,1)][1.3],
$
along with the convention that $i$ corresponds to $ 0 \in 2$ and $j$ corresponds to $1 \in 2$. According to this scheme, a picture of an element in $\squares_{i,j}(R)$ is the transpose of a picture of an element in $\squares_{j,i}(R)$.

\subsection{Higher Dimensional Congruence Relations}

\begin{defn}
Let $B$ be a nonempty set and let $R \subseteq B^2$ be a binary relation on $B$. We say that $R$ is a quasiequivalence relation on $B$ provided that each of the following conditions hold:
\begin{enumerate}
\item $\langle a, b \rangle \in R$ implies $\langle a,a \rangle, \langle b,b \rangle \in R$ (quasireflexivity),
\item $\langle a,b \rangle \in R$ if and only if $\langle b,a \rangle \in R$. (symmetry), and
\item $\langle a,b \rangle, \langle b,c \rangle \in R$ imply that $\langle a,c \rangle \in R$ (transitivity).
\end{enumerate} 
\end{defn}

\begin{defn}\label{def:highercon}
Let $\A $ be an algebra with underlying set $A$ and let $R \subseteq A^{2^S}$ be a $(|S|)$-dimensional relation for some $S\finsub \nat$.
\begin{enumerate}
\item  $R$ is said to be \textbf{$(S)$-reflexive}, \textbf{$(S)$-symmetric}, or \textbf{$(S)$-transitive} if \newline $\faces_i(R)$ is respectively quasireflexive, symmetric, or transitive on $A^{2^{S\setminus \{i\}}}$ for each $i \in S$.

\item $R$ is said to be a \textbf{$(|S|)$-dimensional equivalence relation} provided \newline $\faces_i(R)$ is a quasiequivalence relation on $A^{2^{S\setminus \{i\}}}$ for each $i \in S$. 
\item $R$ is said to be a \textbf{$(|S|)$-dimensional congruence} of $\A$ if it is a $(|S|)$-dimensional equivalence that is also compatible with the basic operation of $\A$.
\item $R$ is said to be a \textbf{$(|S|)$-dimensional tolerance} of $\A$ if it is $(S)$-reflexive, $(S)$-symmetric, and compatible with the basic operations of $\A$.

\end{enumerate}

\end{defn}

The higher dimensional versions of reflexivity and symmetry can be described in terms of certain unary operations. Let $A$ be a nonempty set, $S \finsub \nat$, and $j\in 2$. For each $i \in S$, we define the maps $\refl_i^j : A^{2^S} \to A^{2^S}$ and $\sym_i^j: A^{2^S} \to A^{2^S}$
by 
\begin{align*}
\refl_i^j(h) &= \glue_{\{i\}}(\langle \faces_i^j, \faces_i^j \rangle ) \text{ and }\\
\sym_i(h) &= \glue_{\{i\}}(\langle \faces_i^1, \faces_i^0 \rangle).
\end{align*}
The following lemma is an easy consequence of the definitions. 

\begin{lem}\label{lem:reflsymclosure}
Let $A$ be a nonempty set and $S \finsub \nat$. Let $R \subseteq A^{2^S}$ be a $|S|$-dimensional relation. The following hold:
\begin{enumerate}
\item $R$ is $(S)$-reflexive if and only if $R$ is closed under $\refl_i^j$ for all $(i,j) \in S \times 2$, and
\item $R$ is $(S)$-symmetric if and only if $R$ is closed under $\sym_i$ for every $i \in S$.

\end{enumerate}

\end{lem}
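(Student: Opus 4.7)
The plan is to unwind the definitions using the fact established before the statement that $\glue_{\{i\}}$ and $\cut_{\{i\}} = \faces_i$ are mutually inverse bijections. In both parts, the key observation is that every element of $\faces_i(R)$ has the form $\langle \faces_i^0(\gamma), \faces_i^1(\gamma)\rangle$ for some $\gamma \in R$, and conversely every $\gamma \in R$ produces such a pair; moreover, a pair $\langle a, b\rangle$ belongs to $\faces_i(R)$ precisely when $\glue_{\{i\}}(\langle a,b\rangle) \in R$.

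For part (1), I would argue that $(S)$-reflexivity is equivalent to the following statement: for every $\gamma \in R$, every $i \in S$, and every $j \in 2$, the diagonal pair $\langle \faces_i^j(\gamma), \faces_i^j(\gamma)\rangle$ lies in $\faces_i(R)$. Forward, this is just unfolding quasireflexivity of $\faces_i(R)$ applied to the pair $\langle \faces_i^0(\gamma), \faces_i^1(\gamma)\rangle$; backward, every element of $\faces_i(R)$ arises from some $\gamma \in R$, so the stated condition yields quasireflexivity. Now $\langle \faces_i^j(\gamma), \faces_i^j(\gamma)\rangle \in \faces_i(R)$ says exactly that there is some $\delta \in R$ with $\faces_i(\delta) = \langle \faces_i^j(\gamma), \faces_i^j(\gamma)\rangle$, and applying $\glue_{\{i\}}$ to both sides (using $\glue_{\{i\}} \circ \faces_i = \mathrm{id}$) forces $\delta = \glue_{\{i\}}(\langle \faces_i^j(\gamma), \faces_i^j(\gamma)\rangle) = \refl_i^j(\gamma)$. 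Hence the condition is equivalent to $\refl_i^j(\gamma) \in R$ for all such $\gamma, i, j$, which is the closure statement.

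For part (2), the argument is formally identical but with the diagonal pair replaced by the swap. $(S)$-symmetry unfolds to: for every $\gamma \in R$ and every $i \in S$, the pair $\langle \faces_i^1(\gamma), \faces_i^0(\gamma)\rangle$ lies in $\faces_i(R)$. By the same inverse-bijection argument, this pair lies in $\faces_i(R)$ if and only if $\glue_{\{i\}}(\langle \faces_i^1(\gamma), \faces_i^0(\gamma)\rangle) = \sym_i(\gamma)$ belongs to $R$, giving closure under $\sym_i$.

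There is no genuine obstacle here; the lemma really is a direct translation between a condition phrased in terms of pairs in $\faces_i(R)$ and the same condition phrased in terms of unary operations on $R$, and the translation is mediated entirely by the bijection between $A^{2^S}$ and $(A^{2^{S \setminus \{i\}}})^{2^{\{i\}}}$. The only point worth being explicit about is that quasireflexivity is stated as a one-way implication, so one must verify that every pair in $\faces_i(R)$ is witnessed by some $\gamma \in R$ before concluding — which is automatic from the definition of $\faces_i(R)$ as a pointwise image.
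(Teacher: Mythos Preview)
Your proposal is correct and is exactly the definitional unwinding the paper has in mind; indeed the paper omits the proof entirely, remarking only that the lemma ``is an easy consequence of the definitions.'' Your explicit use of the bijection $\glue_{\{i\}}\circ\faces_i = \mathrm{id}$ to translate between membership of pairs in $\faces_i(R)$ and membership of their glued cubes in $R$ is precisely the intended argument.
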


 We observed earlier that any vertex labeled $(|S|)$-dimensional cube can be interpreted as a cube of cubes. Such an interpretation may be used to formulate weaker versions of higher dimensional symmetry, reflexivity and transitivity. The following lemma makes this precise. The proof, which involves a direct application of the definitions, is left to the reader. 
 
 \begin{lem}\label{lem:higherconofhighercon}
Let $A$ be a nonempty set and $S\finsub\nat$. Let $R \subseteq A^{2^S}$ and suppose $Q \subseteq S$. Each of the following implications holds.

\begin{enumerate}
\item If $R$ is $(S)$-symmetric, then $\cut_Q(R)$ is $(Q)$-symmetric.
\item If $ R$ is $(S)$-reflexive, then $\cut_Q(R)$ is $(Q)$-reflexive.
\item If $R$ is $(S)$-transitive, then $\cut_Q(R)$ is $(Q)$-transitive. 
\end{enumerate}
\end{lem}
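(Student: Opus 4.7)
The plan is to reduce all three statements to a single computational identity, namely that for $i \in Q$ the operation $\faces_i^j$ and the cut $\cut_Q$ commute up to a further cut: for every $\gamma \in A^{2^S}$ and $j \in 2$,
\[
\faces_i^j(\cut_Q(\gamma)) = \cut_{Q\setminus\{i\}}\bigl(\faces_i^j(\gamma)\bigr).
\]
This is a direct unpacking of definitions, using only that $i \in Q$ so that the splitting of coordinates into $Q$ and $S\setminus Q$ keeps the $i$-coordinate on the $Q$-side. Once this identity is in hand, $\faces_i(\cut_Q(R))$ is the image of $\faces_i(R)$ under the bijection $\cut_{Q\setminus\{i\}}$ applied componentwise to each side of each pair. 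In particular, the properties of being reflexive, symmetric, or transitive as binary relations on their respective ambient sets transfer under this bijection.

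For parts (1) and (2), I would in fact use the slicker route provided by Lemma \ref{lem:reflsymclosure}. First check that the unary operations $\refl_i^j$ and $\sym_i$ commute with $\cut_Q$ when $i \in Q$: for $f \in 2^Q$, the value of $\sym_i(\cut_Q(\gamma))$ at $f$ is the $(S\setminus Q)$-cut at the coordinate obtained by swapping the $i$-th entry of $f$, while the value of $\cut_Q(\sym_i(\gamma))$ at $f$ is the map sending $g \in 2^{S\setminus Q}$ to $\gamma_{\sigma_i(g\cup f)}$; since $i \in Q$, $\sigma_i$ does not touch $g$, so the two agree. Exactly the same calculation works for $\refl_i^j$. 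Hence if $R$ is closed under $\sym_i$ (respectively $\refl_i^j$) for every $i \in S$, then in particular this holds for $i \in Q$, and so $\cut_Q(R)$ is closed under the corresponding unary operations on $(A^{2^{S\setminus Q}})^{2^Q}$ for every $i \in Q$. Lemma \ref{lem:reflsymclosure} applied to $\cut_Q(R)$ then gives $(Q)$-symmetry and $(Q)$-reflexivity.

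For part (3), I would argue directly using the commutation identity above. Fix $i \in Q$ and suppose $(\alpha,\beta),(\beta,\gamma) \in \faces_i(\cut_Q(R))$ with $\alpha,\beta,\gamma \in (A^{2^{S\setminus Q}})^{2^{Q\setminus\{i\}}}$. Choose witnesses $\eta_1,\eta_2 \in R$ with $\faces_i^0(\cut_Q(\eta_1))=\alpha$, $\faces_i^1(\cut_Q(\eta_1))=\beta$, $\faces_i^0(\cut_Q(\eta_2))=\beta$, $\faces_i^1(\cut_Q(\eta_2))=\gamma$. Applying the identity, and then the inverse bijection $\glue_{Q\setminus\{i\}}$, we obtain elements $\alpha',\beta',\gamma' \in A^{2^{S\setminus\{i\}}}$ with $(\alpha',\beta'),(\beta',\gamma') \in \faces_i(R)$. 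By $(S)$-transitivity of $R$, there is $\eta_3 \in R$ with $\faces_i^0(\eta_3)=\alpha'$ and $\faces_i^1(\eta_3)=\gamma'$, and then $\cut_Q(\eta_3)$ witnesses $(\alpha,\gamma) \in \faces_i(\cut_Q(R))$.

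The only place there is even a minor obstacle is in articulating the canonical identification $(A^{2^{S\setminus Q}})^{2^{Q\setminus\{i\}}} \cong A^{2^{S\setminus\{i\}}}$ cleanly, which is the content of the commutation identity and is handled by the same bookkeeping used to define $\cut_Q$ and $\glue_Q$. Beyond that, the proof is routine and is the reason the author defers it to the reader.
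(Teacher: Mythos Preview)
Your proposal is correct and is precisely the routine verification the paper has in mind: the author writes only that ``the proof, which involves a direct application of the definitions, is left to the reader.'' Your key commutation identity $\faces_i^j(\cut_Q(\gamma)) = \cut_{Q\setminus\{i\}}(\faces_i^j(\gamma))$ for $i\in Q$ is exactly the bookkeeping needed, and once established it makes $\faces_i(\cut_Q(R))$ the image of $\faces_i(R)$ under a bijection applied coordinatewise, from which all three parts follow immediately; your alternative route for (1) and (2) via Lemma~\ref{lem:reflsymclosure} is equally valid and perhaps slightly more in the spirit of the paper's later arguments.
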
 
 
\begin{cor}\label{lem:hyperreflexive}
Let $A$ be a nonempty set, $S \finsub \nat$, and $R \subseteq A^{2^S}$ be a $(|S|)$-dimensional relation. Let $\gamma \in R$, $Q \subseteq S$ and $f \in 2^Q$. Let $\alpha \in  A^{2^S}$ be defined by
$
\cut_Q(\alpha)_g = \cut_Q(\gamma)_f,
$
for every $g \in 2^Q$.
If $R$ is $(S)$-reflexive, then $\alpha \in R$.
\end{cor}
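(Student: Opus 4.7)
The plan is to realize $\alpha$ as the image of $\gamma$ under a composition of reflection maps that preserve $R$. By Lemma \ref{lem:reflsymclosure}, $(S)$-reflexivity of $R$ says exactly that $R$ is closed under every $\refl_i^j$ with $(i,j) \in S \times 2$, so it suffices to exhibit a sequence of such reflections carrying $\gamma$ to $\alpha$. The directions to reflect along are the elements of $Q$, and for each $i \in Q$ the choice of face $j \in 2$ is dictated by $f$.

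Enumerate $Q = \{i_1, \dots, i_k\}$ and define $\gamma_0 := \gamma$ together with $\gamma_t := \refl_{i_t}^{f(i_t)}(\gamma_{t-1})$ for $t = 1, \dots, k$. Each $\gamma_t$ lies in $R$ by closure, so it remains to identify $\gamma_k$ with $\alpha$. To that end I would unwind the definition of $\refl_i^j$: because $\refl_i^j(h) = \glue_{\{i\}}(\langle \faces_i^j(h), \faces_i^j(h) \rangle)$, the cube $\refl_i^j(h)$ is constant in direction $i$, and its value at any vertex $e \in 2^S$ equals the value of $h$ at the vertex obtained from $e$ by overwriting the $i$-th coordinate with $j$. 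A straightforward induction on $t$ then yields $(\gamma_t)_e = \gamma_{e'}$, where $e' \in 2^S$ agrees with $f$ on $\{i_1, \dots, i_t\}$ and with $e$ on the complement.

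Taking $t = k$ gives $(\gamma_k)_e = \gamma_{(e\vert_{S\setminus Q}) \cup f}$, which depends on $e$ only through $e\vert_{S \setminus Q}$, and matching this against the definitions of $\cut_Q$ and $\glue_Q$ identifies $\gamma_k$ with $\alpha$. The only genuine work is this last bookkeeping step; there is no substantive obstacle, since the successive reflections act along pairwise distinct coordinates and therefore commute, so the order in which $Q$ is enumerated is immaterial.
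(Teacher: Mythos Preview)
Your proof is correct and uses essentially the same idea as the paper: realize $\alpha$ as a composition $\refl_{i_k}^{f(i_k)}\circ\cdots\circ\refl_{i_1}^{f(i_1)}$ applied to $\gamma$, with one reflection for each $i\in Q$. The paper organizes this slightly differently---first handling the special case $Q=S$ and then reducing the general case to it via Lemma~\ref{lem:higherconofhighercon} applied to $\cut_Q(R)$---but your direct coordinate computation is equivalent and in fact avoids the appeal to Lemma~\ref{lem:higherconofhighercon}.
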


\begin{proof}
Suppose $S = \{i_0, \dots, i_{n-1} \}$ for $n = |S|$. We first prove the lemma in the special case when $Q = S$. In this case $\cut_Q(R) = R$. Let $f \in 2^S$. The lemma is asserting that $\alpha \in R$, where $\alpha \in 2^S$ is defined by $\alpha_g = \gamma_f$ for all $g \in 2^S$. Indeed, it is clear that 
\[
\alpha = \refl_{i_0}^{f_{i_0}}(\refl_{i_1}^{f_{i_1}}(\dots (\refl_{i_{n-1}}^{f_{i_{n-1}}}(\gamma)\dots)).
\]
Because $R$ is assumed to be $(S)$-reflexive, it follows from Lemma \ref{lem:reflsymclosure} that $\alpha \in R$. 

For the general case we apply the special case we just handled to the situation where $A' =  A^{2^{S\setminus Q}}$,  $S' = Q$, $R' = \cut_Q(R)$, and $\gamma ' = \cut_Q(\gamma)$.  Now let $\alpha' \in (A')^{2^Q}$ be defined by $(\alpha')_g = (\gamma')_f$. We suppose that $R$ is $(S)$-reflexive, so Lemma \ref{lem:higherconofhighercon} shows that $R'$ is $(Q)$-reflexive. All of the assumptions we made in the special case are satisfied, so we conclude that $\alpha ' \in R'$. Because $\alpha' = \cut_Q(\alpha)$, we have shown that $\cut_Q(\alpha) \in \cut_Q(R)$, or equivalently, that $\alpha \in R$.

\end{proof}

The properties of $(S)$-symmetry, reflexivity, and transitivity are each preserved by projecting onto a set of coordinates that determines a lower dimensional cube. This feature, which is made precise in the next lemma, is in a sense dual to the situation described in Lemma \ref{lem:higherconofhighercon}. 

\begin{lem}\label{lem:projectingontosubcubes}
Let $A$ be a nonempty set and $S\finsub\nat$. Let $R \subseteq A^{2^S}$ and suppose $Q \subseteq S$. Take $f \in 2^Q$. Each of the following implications holds.
\begin{enumerate}
\item If $R$ is $(S)$-symmetric, then $\cut_Q(R)_f$ is $(S\setminus Q)$-symmetric.
\item If $ R$ is $(S)$-reflexive, then $\cut_Q(R)_f$ is $(S \setminus Q)$-reflexive.
\item If $R$ is $(S)$-transitive and $(S)$-reflexive, then $\cut_Q(R)_f$ is $(S \setminus Q)$-transitive. 
\end{enumerate}

\end{lem}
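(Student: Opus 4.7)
The plan is to reduce each implication to the corresponding structural property of $R$ itself, exploiting the fact that when $i \in S \setminus Q$ the operations $\sym_i$, $\refl_i^j$, and $\faces_i^j$ commute with the map $\cut_Q(\cdot)_f$. This commutation is a direct consequence of the definitions: each of $\sym_i$, $\refl_i^j$, $\faces_i^j$ only touches the $i$-coordinate, while $\cut_Q(\cdot)_f$ only touches the $Q$-coordinates, and $\{i\}$ is disjoint from $Q$.

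For (1) and (2), fix $i \in S \setminus Q$ and let $\alpha \in \cut_Q(R)_f$ with witness $\tilde\alpha \in R$ satisfying $\cut_Q(\tilde\alpha)_f = \alpha$. By Lemma \ref{lem:reflsymclosure}, both $\sym_i(\tilde\alpha)$ and $\refl_i^j(\tilde\alpha)$ lie in $R$, and by the commutation just noted their $f$-indexed $Q$-cuts equal $\sym_i(\alpha)$ and $\refl_i^j(\alpha)$ respectively. Hence $\cut_Q(R)_f$ is closed under each $\sym_i$ and $\refl_i^j$ with $i \in S\setminus Q$, and another invocation of Lemma \ref{lem:reflsymclosure} yields $(S\setminus Q)$-symmetry and $(S\setminus Q)$-reflexivity.

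For (3), fix $i \in S\setminus Q$ and suppose $\alpha, \beta \in \cut_Q(R)_f$ satisfy $\faces_i^1(\alpha) = \faces_i^0(\beta)$; pick witnesses $\tilde\alpha, \tilde\beta \in R$. The main obstacle is that the upstairs faces $\faces_i^1(\tilde\alpha)$ and $\faces_i^0(\tilde\beta)$ need not agree, so $(S)$-transitivity of $R$ cannot be applied to $\tilde\alpha$ and $\tilde\beta$ directly. The remedy is to use $(S)$-reflexivity to flatten each witness across the $Q$-direction: by Corollary \ref{lem:hyperreflexive} there exist $\alpha^*, \beta^* \in R$ whose every $Q$-cut equals $\alpha$ and $\beta$ respectively. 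Because $i \notin Q$, this flattening forces $\faces_i^j(\alpha^*)$ to be the constant extension of $\faces_i^j(\alpha)$ along the $Q$-coordinates, and similarly for $\beta^*$. Consequently the downstairs identity $\faces_i^1(\alpha) = \faces_i^0(\beta)$ lifts to $\faces_i^1(\alpha^*) = \faces_i^0(\beta^*)$, and now $(S)$-transitivity of $R$ produces some $\delta^* \in R$ with $\faces_i(\delta^*) = \langle \faces_i^0(\alpha^*), \faces_i^1(\beta^*)\rangle$. Setting $\delta = \cut_Q(\delta^*)_f$, the commutation of $\faces_i$ and $\cut_Q(\cdot)_f$ shows that $\delta \in \cut_Q(R)_f$ satisfies $\faces_i(\delta) = \langle \faces_i^0(\alpha), \faces_i^1(\beta)\rangle$, as needed.

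The only computational content is the verification that $\cut_Q(\cdot)_f$ commutes with $\sym_i$, $\refl_i^j$, and $\faces_i^j$ when $i \in S\setminus Q$, which is immediate from the definitions of $\cut_Q$ and $\glue_Q$. The conceptual step is the use of Corollary \ref{lem:hyperreflexive} in part (3) to promote a local agreement of $Q$-cuts into a global agreement of $i$-faces, which is precisely where the hypothesis of $(S)$-reflexivity is consumed.
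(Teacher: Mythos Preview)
Your proof is correct and follows essentially the same approach as the paper. The paper leaves (1) and (2) to the reader and, for (3), constructs exactly your ``flattened'' witnesses via Corollary~\ref{lem:hyperreflexive} (defining $\alpha,\beta\in R$ with $\cut_Q(\alpha)_g=\gamma$ and $\cut_Q(\beta)_g=\lambda$ for all $g\in 2^Q$), then verifies $\faces_i^1(\alpha)=\faces_i^0(\beta)$ by an explicit coordinate computation rather than invoking the commutation of $\faces_i$ with $\cut_Q(\cdot)_f$ as a general principle.
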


\begin{proof}
The proof of \emph{(1)} and \emph{(2)} is left to the reader. We prove \emph{(3)}. Suppose the conditions of the lemma and \emph{(3)} hold and let $\gamma, \lambda \in \cut_Q(R)_f$ be such that $\faces_i^1(\gamma) = \faces_i^0(\lambda)$ for some $i \in S \setminus Q$. We show that $\glue_{\{i\}}(\langle \faces_i^0(\gamma), \faces_i^1(\lambda) \rangle ) \in \cut_Q(R)_f$. Let $\alpha, \beta \in A^{2^S}$ be defined by
$
\cut_Q(\alpha)_g = \gamma
$ and  
$
\cut_Q(\beta)_g = \lambda
$, for all $g \in 2^{Q}$.
Applying Corollary \ref{lem:hyperreflexive} to this situation shows that $ \alpha, \beta \in R$. 

We claim that $\faces_i^1(\alpha) = \faces_i^0(\beta)$. Indeed, let $h \in 2^{S\setminus \{i\}}$. We can decompose $h$ as the union of two partial functions $h' \in 2^Q$ and $h'' \in 2^{S \setminus Q \union \{i\}}$. The computation  

\begin{align*}
\faces_i^1(\alpha)_h 
&= \alpha_{h' \union h'' \union \langle i, 1\rangle }\\
&= (\cut_Q(\alpha)_{h'})_{h^{''} \union \langle i, 1\rangle }\\
&= \gamma_{h^{''} \union \langle i, 1\rangle }\\
&= \lambda_{h^{''} \union \langle i, 0\rangle }\\
&= (\cut_Q(\beta)_{h'})_{h^{''} \union \langle i, 0\rangle }\\
&= \beta_{h' \union h'' \union \langle i, 0\rangle }\\
&= \faces_i^0(\beta)_h 
\end{align*}
establishes our claim. 
Therefore, 
$\eta = \glue_{\{i\}}(\langle \faces_i^0(\alpha) , \faces_i^1(\beta)\rangle ) \in R$ and so $\cut_Q(\eta)_f \in \cut_Q(R)_f$. A computation similar to the one above shows that $\cut_Q(\eta)_f = \glue_{\{i\}}(\langle \faces_i^0(\gamma), \faces_i^1(\lambda) \rangle )$, as desired. 

\end{proof}

\begin{cor}\label{cor:deltaedgesarecongruences}
Let $\A$ be an algebra and $S \finsub \nat$. Let $R \leq \A^{2^S}$ be a $(|S|)$-dimensional tolerance of $\A$. For every $Q \subseteq S$, 
\begin{enumerate}
\item $\cut_Q(R)_f$ is a $(|S \setminus Q|)$-dimensional tolerance of $\A$, and 
\item $\cut_Q(R)_f = \cut_Q(R)_g$ for all $f, g \in 2^Q$.
\end{enumerate} 
Additionally, the same statement holds if the word `tolerance' is replaced by `congruence'.
\end{cor}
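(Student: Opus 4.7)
My plan is to assemble the corollary from the three pieces already available: Lemma \ref{lem:projectingontosubcubes} supplies the higher-dimensional relational properties of $\cut_Q(R)_f$, Corollary \ref{lem:hyperreflexive} supplies the independence on $f\in 2^Q$, and compatibility with the basic operations will follow directly from the fact that $\cut_Q$ is a bijection that merely reindexes coordinates.

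For part \emph{(1)}, I would first observe that $\cut_Q(R)_f$ is a subalgebra of $\A^{2^{S\setminus Q}}$. The operations on $\A^{2^S}$ act componentwise on the $2^S$ coordinates, and the bijection $\cut_Q$ just splits these coordinates into $2^Q$ outer and $2^{S\setminus Q}$ inner coordinates; hence operations on $(\A^{2^{S\setminus Q}})^{2^Q}$ also act componentwise. Evaluating at the single outer coordinate $f\in 2^Q$ is then a projection homomorphism, so the image of $R$ under $\gamma\mapsto \cut_Q(\gamma)_f$ is a subalgebra of $\A^{2^{S\setminus Q}}$. The remaining requirements $(S\setminus Q)$-reflexivity and $(S\setminus Q)$-symmetry are precisely parts \emph{(2)} and \emph{(1)} of Lemma \ref{lem:projectingontosubcubes}. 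The congruence strengthening then requires $(S\setminus Q)$-transitivity of $\cut_Q(R)_f$, which is Lemma \ref{lem:projectingontosubcubes}\emph{(3)}, noting that congruences are reflexive so its hypothesis is satisfied.

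For part \emph{(2)}, the idea is to use $(S)$-reflexivity to ``fatten'' a single $Q$-cut into a constant cube of $Q$-cuts. Let $\gamma\in\cut_Q(R)_f$, and pick $\tilde{\gamma}\in R$ with $\cut_Q(\tilde{\gamma})_f=\gamma$. Applying Corollary \ref{lem:hyperreflexive} with this $\tilde{\gamma}$, the subset $Q$, and the element $f\in 2^Q$, I obtain $\alpha\in R$ such that $\cut_Q(\alpha)_h=\gamma$ for every $h\in 2^Q$. In particular $\cut_Q(\alpha)_g=\gamma$, so $\gamma\in\cut_Q(R)_g$. The reverse inclusion follows by the same argument with $f$ and $g$ interchanged, giving equality. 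I do not anticipate any serious obstacle; the only real work is notational, namely keeping straight the correspondence between coordinate subsets and their complements when invoking each lemma, so everything lines up with the hypotheses as stated.
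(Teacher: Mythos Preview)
Your proposal is correct and follows essentially the same route as the paper: Lemma~\ref{lem:projectingontosubcubes} for the relational properties of $\cut_Q(R)_f$ and Corollary~\ref{lem:hyperreflexive} for the independence from $f$. Your explicit verification that $\cut_Q(R)_f$ is compatible (a subalgebra) is a detail the paper leaves implicit, but otherwise the arguments coincide.
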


\begin{proof}
The first item \emph{(1)} of the lemma follows from \emph{(1)} and \emph{(2)} of Lemma \ref{lem:projectingontosubcubes}. To show \emph{(2)}, suppose $f, g \in 2^Q$ and take $\gamma \in \cut_Q(R)_f$. By Corollary \ref{lem:hyperreflexive}, there exists $\alpha \in R$ so that $\cut_Q(\alpha)_g = \gamma$. Therefore, $\cut_Q(R)_f \subseteq \cut_Q(R)_g$. The same argument shows that $\cut_Q(R)_g \subseteq \cut_Q(R)_g$.

If $R$ is assumed to be a $(|S|)$-dimensional congruence, then \emph{(3)} of Lemma \ref{lem:projectingontosubcubes} indicates that $\cut_Q(R)_f$ is also $(|S\setminus Q|)$-transitive for every $f \in 2^Q$. This establishes the final statement of the lemma. 
\end{proof}

\begin{defn}\label{def:higherconlattice}
Let $\A$ be an algebra. For each $S \finsub \nat$, set

\[ \Con_S(\A) \coloneqq \langle \{R \subseteq A^{2^S}: R \text{ is an $(|S|)$-dimensional congruence} \}; \join, \meet, 0, 1 \rangle
, \text{ where }\]
\begin{enumerate}

\item $R\meet T \coloneqq R \cap T$ 
 \item $R \join T \coloneqq \Meet \{Z: R \cup T \subseteq Z \}$.
 
\end{enumerate}

\end{defn}

It is an easy exercise to show that each of these lattices is algebraic. The definition we give contains many redundancies, because $\Con_Q(\A)$ and $\Con_S(\A)$ encode exactly the same information whenever $|Q| = |S|$. The reader may wonder why we do not instead use the canonical choice of coordinates which produces the following sequence of lattices:
\[
\Con_0(\A), \Con_1(\A), \dots, \Con_n(\A), \dots 
\]
Our choice is motivated by a wish to avoid changing coordinate systems when we consider nested commutator expressions. 

We remark that $\Con_1(\A)$ is different from $\Con(\A)$, because we require only quasireflexivity of our relations. This relaxation of reflexivity has the consequence that  $\Con_1(\A)$ contains all congruences of subalgebras of $\A$. The ordinary congruence lattice of $\A$ is isomorphic to the interval above the full diagonal relation in $\Con_1(\A)$. We also remark that $\Con_0(\A)$ is the lattice of subuniverses of $\A$ and that all of these lattices may have the empty relation as the least element in the event that $\A$ has no smallest subalgebra. There are some appealing extensions of classical results pertaining to congruences to higher dimensional congruences. Most notably, an $(n)$-dimensional equivalence relation of an algebra $\A$ is a compatible relation if and only if it is compatible with those $n$-ary polynomials of the subalgebra determined by its intersection with the diagonal of $A$ in $A^{2^n}$. These ideas will be presented in a companion article.

We now describe the generation of higher dimensional congruences. Take $S\finsub \nat$ (the case $|S| =0$ is generation of a subalgebra), and let $X\subseteq A^{2^S}$. We respectively define the $(|S|)$-dimensional congruence and $(|S|)$-dimensional tolerance of $\A$ generated by $X$ as

\begin{align*}
\Theta_S(X) &= \Meet \{R: R \text{ is a $(|S|)$-dimensional congruence and } X\subseteq R \}\\
\tol_S(X) &= \Meet \{R: R \text{ is a $(|S|)$-dimensional tolerance and } X\subseteq R \}.
\end{align*}

The notion of a transitive closure of a binary relation generalizes to higher dimensions in the obvious way. Suppose $S = \{i_0, \dots, i_{n-1}\} \finsub \nat$ for some $n \in \nat$. Let $Y \subseteq A^{2^S}$ be a $(|S|)$-dimensional relation. For $i \in S$ set 

\[ Y^{\circ_i } =  \glue_{\{i\}}( \faces_i(Y)^{\circ}),\]
where $\faces_i(Y)^{\circ}$ is the transitive closure of $\faces_i(Y)$ when interpreted as a binary relation. We recursively define

\begin{enumerate}
\item $\tc_{0}(Y) = Y^{\circ_{i_0}}$, and 
\item $\tc_{j}(Y) = \left(\tc_{j-1}(Y)\right)^{\circ_{i_{j\mod n}}}$, for $j > 0$.

\end{enumerate}
Finally, set $\tc(Y) = \Union_{j \in \nat} \tc_{i_j}(Y)$. The proof of the following proposition is left to the reader.

\begin{lem}\label{prop:hcongenerate} 
Let $\A$ be an algebra and $S \finsub \nat$. The following hold.
\begin{enumerate}
\item If $R$ is a $(|S|)$-dimensional tolerance of $\A$, then $R^{\circ_i}$ is a $(|S|)$-dimensional tolerance of $\A$ and $R \subseteq R^{\circ_i}$.
\item
$\Theta_S(X) = \tc(\tol_S(X))$, for all $X \subseteq A^{2^S}$.
\item 
$\cut_Q(\tc(X))_f \subseteq \tc(\cut_Q(X)_f)$, for all $X \subseteq A^{2^S}$, $Q \subseteq S$, and $f \in 2^Q$.
\end{enumerate}

\end{lem}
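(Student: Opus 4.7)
My plan is to handle the three items in order, with item (3) being the most delicate.

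For item (1), the inclusion $R \subseteq R^{\circ_i}$ holds because $\faces_i(R) \subseteq \faces_i(R)^\circ$. A typical element of $R^{\circ_i}$ has the form $\eta = \glue_{\{i\}}(\langle a,b\rangle)$ witnessed by a chain $a = c_0, \ldots, c_\ell = b$ with each $\glue_{\{i\}}(\langle c_p, c_{p+1}\rangle) \in R$. Symmetry of $R^{\circ_i}$ at coordinate $i$ comes from reversing the chain and invoking $(S)$-symmetry of $R$, while symmetry and reflexivity at any $k \in S \setminus \{i\}$ come from applying $\sym_k$ or $\refl_k^j$ termwise along the chain, noting that these unary operations commute with $\glue_{\{i\}}$. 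For compatibility with a $k$-ary basic operation $t$, pad the needed chains to a common length using reflexivity of $R$, then apply $t$ coordinate-wise along the chain and invoke compatibility of $R$ with $t$.

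For item (2), the inclusion $\Theta_S(X) \supseteq \tc(\tol_S(X))$ is immediate: $\Theta_S(X) \supseteq \tol_S(X)$, and as a congruence $\Theta_S(X)$ is already closed under every $\tc_j$ step. For the reverse, I check that $\tc(\tol_S(X))$ is a $(|S|)$-dimensional congruence. By item (1) applied iteratively, each $\tc_j(\tol_S(X))$ is a tolerance, and the ascending union $\tc(\tol_S(X))$ inherits this. For transitivity at a given $i \in S$: given $\gamma, \lambda \in \tc_K(\tol_S(X))$ with $\faces_i^1(\gamma) = \faces_i^0(\lambda)$, choose $K' \geq K$ with $i_{K' \bmod n} = i$, which exists because the enumeration cycles; then $\glue_{\{i\}}(\langle \faces_i^0(\gamma), \faces_i^1(\lambda)\rangle) \in \tc_{K'}(\tol_S(X))$ follows directly from the definition of the closure step at $i$.

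For item (3), I induct on the smallest $K \geq -1$ (setting $\tc_{-1}(X) := X$) such that some lift $\alpha$ of $\gamma = \cut_Q(\alpha)_f$ lies in $\tc_K(X)$. The base case $\alpha \in X$ gives $\gamma \in \cut_Q(X)_f \subseteq \tc(\cut_Q(X)_f)$. In the inductive step $K \geq 0$, write $\alpha = \glue_{\{j\}}(\langle a, b \rangle)$ with $j = i_{K \bmod n}$ and a chain $a = c_0, \ldots, c_\ell = b$ with each $\glue_{\{j\}}(\langle c_p, c_{p+1}\rangle) \in \tc_{K-1}(X)$. The argument then splits on whether $j \in Q$. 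If $j \in Q$, the slice at $f$ kills the $j$-coordinate, so $\cut_Q(\alpha)_f = \cut_Q(\glue_{\{j\}}(\langle c_p, c_{p+1}\rangle))_f$ for the single link $p$ selected by $f(j)$; this lies in $\cut_Q(\tc_{K-1}(X))_f \subseteq \tc(\cut_Q(X)_f)$ by the inductive hypothesis. If $j \notin Q$, then $\cut_Q(\cdot)_f$ commutes with $\glue_{\{j\}}$, so setting $d_p := \cut_Q(c_p)_f$ yields $\glue_{\{j\}}(\langle d_p, d_{p+1}\rangle) \in \tc(\cut_Q(X)_f)$ for each $p$ by induction. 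Transitivity of $\tc(\cut_Q(X)_f)$ at coordinate $j \in S \setminus Q$ (by the same cyclic argument as in item (2)) then collapses the chain, giving $\gamma = \glue_{\{j\}}(\langle d_0, d_\ell \rangle) \in \tc(\cut_Q(X)_f)$.

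The principal obstacle is the case analysis in item (3): one must carefully track how $\cut_Q(\cdot)_f$ distributes over $\glue_{\{j\}}$ depending on whether $j \in Q$, and the $j \notin Q$ case depends on transitivity of the right-hand side at $j$, which invokes the cyclic argument from item (2) applied in the smaller coordinate system $S \setminus Q$.
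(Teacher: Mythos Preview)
Your argument is correct. The paper leaves this lemma to the reader, so there is no proof to compare against; your write-up constitutes a valid solution. Two small remarks: in item~(1) the padding you invoke uses only \emph{quasi}reflexivity of $\faces_i(R)$ (which is what $(S)$-reflexivity provides), and in item~(3) your appeal to transitivity of $\tc(\cut_Q(X)_f)$ at $j \in S\setminus Q$ is legitimate because the chain $\tc_0(Y)\subseteq \tc_1(Y)\subseteq\cdots$ is increasing for \emph{any} $Y$ (since $R\subseteq R^{\circ_i}$ holds unconditionally), so the cyclic argument from item~(2) does not require $Y$ to be a tolerance.
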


\subsection{Centrality Conditions}

We now use this machinery to develop the commutator theory for the congruences of an algebra. It is interesting to note that the scope of this theory could be enlarged to include all higher dimensional congruences. It is unclear if such a broad generalization of commutator theory has any practical application, so we limit our development to congruences.

In this section we define two centralizer conditions that are used to define two distinct higher arity commutators. The first is due to Bulatov and is a natural extension of the so-called term condition. The second is a new condition and is used to define what we call the hypercommutator. 

The definition of the $(n)$-ary commutator as formulated by Bulatov in \cite{buldef} can be restated as a condition on a certain $(n)$-dimensional invariant relation, elements of which are often referred to as matrices. We do not state the original definition here, but refer the reader to \cite{moorheadHC} for details on the equivalence between our definition of the term condition higher commutator and that given by Bulatov. 

Let $\A$ be an algebra, $S \finsub \nat$, and $ m \in |S|$. Corollary \ref{cor:deltaedgesarecongruences} associates to any $(|S|)$-dimensional congruence $R$ a collection of $(|S|-m)$-dimensional congruences indexed by the subsets of $S$ of cardinality $m$, i.e.\ the set
\[
P_m(R)=
\{ \cut_Q(R)_\textbf{1} : Q \in S^{[m]}\}.
\] 
For any such indexed set of higher dimensional congruences, i.e.
\[
\{T_Q \in \Con_{S \setminus Q}(\A) \}_{Q \in S^{[m]}},
\]
there exists (as can be easily verified) a maximal $(|S|)$-dimensional congruence
\[
\rect\left(\{T_Q \in \Con_{S \setminus Q}(\A) \}_{Q \in S^{[m]}} \right) \in \Con_S(\A)
\]
that satisfies 
\[
P_m\left
(\rect\left(\{T_Q \in \Con_{S \setminus Q}(\A) \}_{Q \in S^{[m]}} \right)
\right) = \{T_Q \in \Con_{S \setminus Q}(\A) \}_{Q \in S^{[m]}} .
\]

We call this maximal relation the \textbf{$\left(\{T_Q \in \Con_{S \setminus Q}(\A) \}_{Q \in S^{[m]}}\right)$-rectangles}.
In the special case that $m = |S|-1$, we have that 
$
P_{|S|-1}(R) = \{\lines_i(R)_\textbf{1}: i \in S \}
$
is an $S$-indexed family of $(1)$-dimensional congruences, and  $\rect(\left(\{\lines_i(R)_\textbf{1}: i \in S \}\right)$ is the $(|S|)$-dimensional congruence consisting of all those $\gamma \in A^{2^S}$ satisfying $\lines_i(\gamma)_f \in \lines_i(R)_\textbf{1}$, for all $i \in S$ and $f \in 2^{S\setminus \{i\}}$. If it is also the case that $S = \{l, \dots, n-1\} = n\setminus l$, we use the notation
\begin{align*}
(\theta_l, \dots, \theta_{n-1}) &\text{ instead of } 
P_{n-l-1}(R) \text{, and}\\
\rect(\theta_l, \dots, \theta_{n-1}) &\text{ instead of }
\rect(P_{n-l-1}(R)).
\end{align*}

We are still in the situation where $\A$ is an algebra and $S \finsub \nat$. Assume also that $|S|\geq 1$. For each $i \in S$ define 
$
\cube_i: A^2 \to A^{2^S}
$
by 
\[
\cube_i(x,y)_f = 
\begin{cases}
x & \text{if } f_i = 0 \text{, and}\\
y & \text{if } f_i =1.
\end{cases}
\]
From the context it should be clear what the dimension of $\cube_i(x,y)$ is.

\begin{defn}\label{def:tolmatdeltadefinition}
Let $\A$ be an algebra and $S \finsub \nat$ with $|S|\geq 1$. Let $\{\theta_i\}_{i \in S} \subseteq \Con(\A)$ be an $S$-indexed set of congruences. Set

\begin{align*}
M(\{\theta_i\}_{i \in S}) &= \tol_S\bigg( \Union_{i\in S} \cube_i(\theta_i)
 \bigg) \text{, and}\\
\Delta(\{\theta_i\}_{i \in S}) &= \Theta_S\bigg( \Union_{i\in S} \cube_i(\theta_i) \bigg).
\end{align*}
Notice that if $|S| =1$, then 
$
M(\{\theta_i\}_{i \in S}) = \Delta(\{\theta_i\}_{i \in S}),
$
and this relation is equal to $\theta_i$ (up to a trivial change of coordinates).
In case $S = \{m, \dots, n-1\} = n\setminus m$, we will use the notation $(\theta_m, \dots, \theta_{n-1})$, $M(\theta_m, \dots, \theta_{n-1})$, and $\Delta(\theta_m, \dots, \theta_{n-1})$ for $\{\theta_i\}_{i \in S}$, $M(\{\theta_i\}_{i \in S})$, and $\Delta(\{\theta_i\}_{i \in S})$, respectively.
\end{defn}
 
\begin{figure}[!ht]
\includegraphics[scale=1]{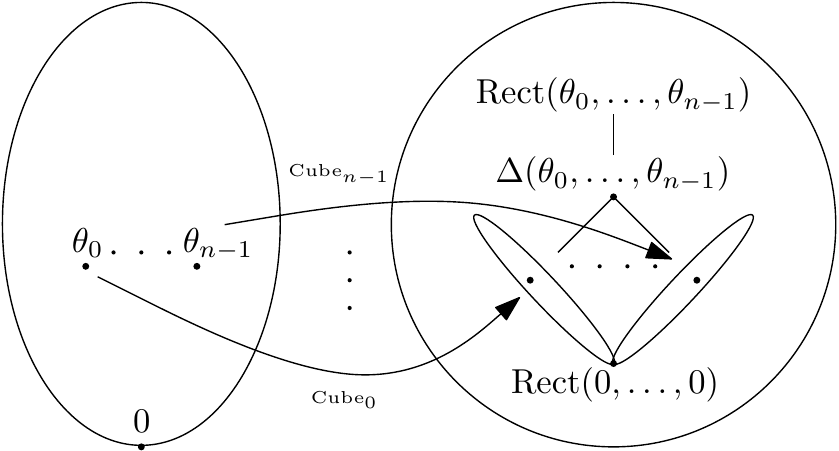}
\caption{Embeddings of $\Con(\A)$ into $\Con_n(\A)$}\label{fig:deltapicture}
\end{figure}

\begin{rk}
Let $n \geq 2$. For each $i \in n$, the map 
\begin{align*}
\CuBe_i: \mathcal{P}(A^{2}) &\to \mathcal{P}(A^{2^n})\\
R &\mapsto \cube_i(R)
\end{align*}
when restricted to $\Con(\A)$ is a lattice embedding into $\Con_n(\A)$. Denote the least congruence of $\A$ by $0$. Any two distinct such embeddings intersect only at their shared bottom element, which is $\rect(0, \dots, 0)$. See Figure \ref{fig:deltapicture} for a picture that shows the relationship between these embeddings and Definition \ref{def:tolmatdeltadefinition}.
\end{rk}

For historical reasons, we call $M(\{\theta_i\}_{i \in S})$ the \textbf{algebra of $\{\theta_i\}_{i \in S}$ matrices}. The following lemmas establish some basic properties of these two relations. Each statement is referring to the situation established in Definition \ref{def:tolmatdeltadefinition}.

\begin{lem}\label{lem:basicdelta0}
$ M(\{\theta_i\}_{i \in S}) \leq \Delta(\{\theta_i\}_{i \in S}) \leq \rect(\{\theta_i\}_{i \in S}).
$
\end{lem}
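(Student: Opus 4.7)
The plan is to prove the two inequalities separately, both by short generation arguments. For the first inequality $M(\{\theta_i\}_{i \in S}) \leq \Delta(\{\theta_i\}_{i \in S})$, I would simply observe that every $(|S|)$-dimensional congruence is, in particular, a $(|S|)$-dimensional tolerance. Therefore the intersection
\[
\Delta(\{\theta_i\}_{i \in S}) = \Meet\{R : R \text{ is a } (|S|)\text{-dim.\ congruence containing } \Union_{i\in S}\cube_i(\theta_i)\}
\]
is taken over a subfamily of the family defining
\[
M(\{\theta_i\}_{i \in S}) = \Meet\{R : R \text{ is a } (|S|)\text{-dim.\ tolerance containing } \Union_{i\in S}\cube_i(\theta_i)\},
\]
so $M(\{\theta_i\}_{i \in S}) \subseteq \Delta(\{\theta_i\}_{i \in S})$.

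For the second inequality $\Delta(\{\theta_i\}_{i \in S}) \leq \rect(\{\theta_i\}_{i \in S})$, it suffices to show that $\rect(\{\theta_i\}_{i \in S})$ is itself a $(|S|)$-dimensional congruence of $\A$ containing $\cube_i(\theta_i)$ for each $i \in S$, since $\Delta$ is defined as the least such relation. That $\rect(\{\theta_i\}_{i \in S})$ is a $(|S|)$-dimensional congruence is part of its very definition as given earlier in this section. So the only content is to check the inclusion $\cube_i(\theta_i) \subseteq \rect(\{\theta_i\}_{i \in S})$ for each $i \in S$. By definition of $\rect$, this amounts to verifying that for every $(x,y) \in \theta_i$, the cube $\gamma = \cube_i(x,y)$ satisfies $\lines_j(\gamma)_f \in \theta_j$ for every $j \in S$ and every $f \in 2^{S\setminus\{j\}}$.

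I would dispatch the verification by two cases. If $j = i$, then by the definition of $\cube_i$ the value of $\gamma_h$ depends only on $h_i$, so every $i$-line of $\gamma$ is precisely the pair $(x,y) \in \theta_i$. If $j \neq i$, then $f_i$ is defined (since $i \in S\setminus\{j\}$), and both endpoints of the $j$-line indexed by $f$ agree with $\gamma_{f \cup \langle j,0\rangle}$ and $\gamma_{f \cup \langle j,1\rangle}$, which share the same value of the $i$-coordinate and hence are equal; the resulting diagonal pair lies in $\theta_j$ by reflexivity. This establishes $\cube_i(\theta_i) \subseteq \rect(\{\theta_i\}_{i \in S})$ and completes the argument.

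There is no real obstacle: the whole statement is a structural consequence of how $M$, $\Delta$, and $\rect$ are defined, together with the observation that the generators $\cube_i(\theta_i)$ are built to have the ``right'' lines in each direction. The only mildly subtle step is the bookkeeping for the $j \neq i$ case, where one has to correctly identify that the two endpoints of a $j$-line in $\cube_i(x,y)$ coincide because the $i$-coordinate is held fixed along that line.
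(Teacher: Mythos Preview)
Your proposal is correct and follows exactly the paper's approach: the first containment from the fact that every $(|S|)$-dimensional congruence is a tolerance, and the second from the observation that $\cube_i(\theta_i) \leq \rect(\{\theta_i\}_{i \in S})$ together with $\rect(\{\theta_i\}_{i \in S}) \in \Con_S(\A)$. You have simply spelled out the line-by-line verification of $\cube_i(\theta_i) \subseteq \rect(\{\theta_i\}_{i \in S})$ that the paper leaves implicit.
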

\begin{proof}
The first containment follows from the fact that any $(|S|)$-dimensional congruence is also a $(|S|)$-dimensional tolerance. The second containment follows from the observation that $\cube_i(\theta_i) \leq \rect(\{\theta_i\}_{i \in S})$ and that $\rect(\{\theta_i\}_{i \in S}) \in \Con_S(\A)$. 
\end{proof}

\begin{lem}\label{lem:basicdelta1}
$M(\{\theta_i\}_{i \in S}) = \Sg_{A^{2^S}}\bigg( \Union_{i\in S} \cube_i(\theta_i) \bigg).
$
\end{lem}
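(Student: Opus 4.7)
Let $X = \Union_{i\in S} \cube_i(\theta_i)$. The inclusion $\Sg_{\A^{2^S}}(X) \subseteq \tol_S(X) = M(\{\theta_i\}_{i\in S})$ is immediate, since every $(|S|)$-dimensional tolerance of $\A$ is, by definition, a subalgebra of $\A^{2^S}$ containing $X$. The real work is the reverse inclusion, for which it suffices, by Lemma \ref{lem:reflsymclosure}, to show that $\Sg_{\A^{2^S}}(X)$ is closed under $\refl_i^j$ for every $(i,j)\in S\times 2$ and under $\sym_i$ for every $i\in S$; once this is known, $\Sg_{\A^{2^S}}(X)$ is an $(S)$-reflexive, $(S)$-symmetric subalgebra of $\A^{2^S}$ containing $X$, and hence contains $\tol_S(X)$.

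The key observation is that $\refl_i^j$ and $\sym_i$ are both induced by maps $2^S\to 2^S$ acting on the coordinate system. Explicitly, if $\rho_i^j,\sigma_i\colon 2^S\to 2^S$ are defined by $\rho_i^j(f)= (f\setminus\{\langle i,f_i\rangle\})\cup\{\langle i,j\rangle\}$ and $\sigma_i(f)=(f\setminus\{\langle i,f_i\rangle\})\cup\{\langle i,1-f_i\rangle\}$, then $\refl_i^j(\gamma)=\gamma\circ\rho_i^j$ and $\sym_i(\gamma)=\gamma\circ\sigma_i$ as functions $2^S\to A$. Because any term operation $t$ of $\A^{2^S}$ acts coordinatewise, composition with a fixed map $\tau\colon 2^S\to 2^S$ commutes with $t$; that is, $t(\gamma_1,\dots,\gamma_k)\circ\tau = t(\gamma_1\circ\tau,\dots,\gamma_k\circ\tau)$. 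Consequently, if a set $Y\subseteq A^{2^S}$ is closed under $\gamma\mapsto\gamma\circ\tau$, then so is $\Sg_{\A^{2^S}}(Y)$, by a routine induction on the construction of term values.

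It therefore remains to check that the generating set $X$ itself is closed under each $\refl_i^j$ and each $\sym_i$. Fix a generator $\cube_k(x,y)\in\cube_k(\theta_k)$. If $k\neq i$, then $\rho_i^j$ and $\sigma_i$ do not disturb the $k$-th coordinate of any $f\in 2^S$, so $\refl_i^j(\cube_k(x,y))=\cube_k(x,y)$ and $\sym_i(\cube_k(x,y))=\cube_k(x,y)$. If $k=i$, then $\refl_i^0(\cube_i(x,y))=\cube_i(x,x)$ and $\refl_i^1(\cube_i(x,y))=\cube_i(y,y)$, both of which lie in $\cube_i(\theta_i)$ by reflexivity of $\theta_i$; and $\sym_i(\cube_i(x,y))=\cube_i(y,x)\in\cube_i(\theta_i)$ by symmetry of $\theta_i$. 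Combining this with the previous paragraph, $\Sg_{\A^{2^S}}(X)$ is closed under all $\refl_i^j$ and $\sym_i$, completing the proof.

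The only place where anything could plausibly go wrong is the bookkeeping in the first paragraph's reduction and in verifying that the coordinate maps $\rho_i^j,\sigma_i$ really do capture $\refl_i^j$ and $\sym_i$ on the nose; both are mechanical checks against Definition \ref{def:highercon} and the formulas preceding Lemma \ref{lem:reflsymclosure}, so no genuine obstacle is anticipated.
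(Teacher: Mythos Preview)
Your proof is correct and follows essentially the same approach as the paper's: both observe that the generating set $X=\bigcup_{i\in S}\cube_i(\theta_i)$ is already $(S)$-reflexive and $(S)$-symmetric (using that each $\theta_i$ is a congruence), and hence that $\Sg_{\A^{2^S}}(X)$ is an $(|S|)$-dimensional tolerance, forcing equality with $\tol_S(X)$. The paper asserts this in two sentences, while you spell out the implicit step---that closure of $X$ under $\refl_i^j$ and $\sym_i$ propagates to $\Sg_{\A^{2^S}}(X)$ because these maps are pullbacks along coordinate functions $2^S\to 2^S$ and thus commute with term operations---but this is the expected unpacking of the paper's ``it follows that'', not a different route.
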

\begin{proof}
Because $\theta_i$ is a congruence for each $i \in S$, the relation $\Union_{i\in S} \cube_i(\theta_i)$ is both $(S)$-symmetric and $(S)$-reflexive. It follows that 
\[
\Sg_{A^{2^S}}\bigg( \Union_{i\in S} \{\cube_i(x,y) : \langle x,y \rangle \in \theta_i \} \bigg)
\]
is already an $(|S|)$-dimensional tolerance, and is therefore equal to $M(\{\theta_i\}_{i \in S})$.
\end{proof}

\begin{lem}\label{lem:basicdelta2}
 $\Delta(\{\theta_i\}_{i \in S}) = \tc(M(\{\theta_i\}_{i \in S}))$.

\end{lem}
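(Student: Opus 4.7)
The plan is to obtain this as an immediate instance of Lemma \ref{prop:hcongenerate}(2). That lemma records the general identity $\Theta_S(X) = \tc(\tol_S(X))$ for every $X \subseteq A^{2^S}$. Specializing it to the generating set $X = \Union_{i \in S} \cube_i(\theta_i)$ and then unraveling the definitions of $\Delta(\{\theta_i\}_{i \in S})$ and $M(\{\theta_i\}_{i \in S})$ produces the claimed equality $\Delta = \tc(M)$ in a single line.

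Since Lemma \ref{prop:hcongenerate} is stated without proof, I would take a moment to verify both inclusions in the present instance. The containment $\tc(M) \subseteq \Delta$ is effortless: by Lemma \ref{lem:basicdelta0} we have $M \subseteq \Delta$, and because $\Delta$ is an $(|S|)$-dimensional congruence it is $(S)$-transitive, so $R \mapsto R^{\circ_i}$ fixes $\Delta$ for every $i \in S$ and $\tc(\Delta) = \Delta$; monotonicity of $\tc$ then yields $\tc(M) \subseteq \tc(\Delta) = \Delta$.

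For the reverse inclusion it suffices to check that $\tc(M)$ is itself an $(|S|)$-dimensional congruence of $\A$ that contains $X$, since $\Delta$ is the smallest such. Containment of $X$ is immediate from $X \subseteq M \subseteq \tc(M)$. By Lemma \ref{prop:hcongenerate}(1) each operation $R \mapsto R^{\circ_i}$ preserves the property of being an $(|S|)$-dimensional tolerance, so $(S)$-reflexivity, $(S)$-symmetry, and compatibility with the basic operations of $\A$ propagate through the chain $M \subseteq \tc_0(M) \subseteq \tc_1(M) \subseteq \cdots$ and survive its directed union.

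The only point that requires any real work is $(S)$-transitivity of $\tc(M)$, and I expect this to be the main obstacle. The argument is a direct adaptation of the standard fact for binary transitive closures: the definition of $\tc$ cycles through the indices of $S$ infinitely often, so given $\gamma, \delta \in \tc(M)$ with $\faces_i^1(\gamma) = \faces_i^0(\delta)$, both elements lie in some common $\tc_j(M)$ with $j \equiv i \pmod{|S|}$, and the glued cube $\glue_{\{i\}}(\langle \faces_i^0(\gamma), \faces_i^1(\delta) \rangle)$ then lands in the next iterate indexed by $i$, namely $\tc_{j+|S|}(M) \subseteq \tc(M)$. This bookkeeping on indices is the only nontrivial ingredient; everything else is a mechanical application of the definitions.
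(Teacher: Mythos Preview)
Your proposal is correct and takes essentially the same approach as the paper: the paper's entire proof is the single sentence ``This is an immediate consequence of Lemma~\ref{prop:hcongenerate},'' and you identify precisely the relevant part, namely item~(2) with $X = \Union_{i\in S}\cube_i(\theta_i)$. Your additional verification of both inclusions (which the paper omits, having left Lemma~\ref{prop:hcongenerate} to the reader) is sound; the only quibble is a harmless index slip in the transitivity bookkeeping---once $\gamma,\delta$ lie in a common $\tc_j(M)$ with $j$ congruent to the index of $i$ modulo $|S|$, that stage is already transitive in direction $i$, so the glued cube lands in $\tc_j(M)$ itself rather than $\tc_{j+|S|}(M)$.
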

\begin{proof}
This is an immediate consequence of Lemma \ref{prop:hcongenerate}.
\end{proof}

\begin{lem}\label{lem:deltaprojectstodelta}
For every $Q \subseteq S$ and $f \in 2^Q$,
\begin{enumerate}
\item 
$\cut_Q(M(\{\theta_i\}_{i\in S}))_f = M(\{\theta_i\}_{i \in S \setminus Q })$, and 
\item 
$\cut_Q(\Delta(\{\theta_i\}_{i\in S}))_f = \Delta(\{\theta_i\}_{i \in S \setminus Q })$.
\end{enumerate}
\end{lem}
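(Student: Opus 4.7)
The plan is to introduce two natural algebra homomorphisms between $\A^{2^S}$ and $\A^{2^{S\setminus Q}}$ and play them off one another. Let the projection $\pi = \pi_{Q,f}: \A^{2^S} \to \A^{2^{S\setminus Q}}$ be given by $\pi(\gamma) = \cut_Q(\gamma)_f$; as a coordinate projection it is a surjective algebra homomorphism. Dually, let $h: \A^{2^{S\setminus Q}} \to \A^{2^S}$ be the lift $h(\delta)_g = \delta_{g|_{S\setminus Q}}$, i.e., the diagonal embedding in the $Q$-directions; it is also a homomorphism, and satisfies $\pi \circ h = \mathrm{id}$. Writing $\cube_i^{(T)}$ for the cube map into $A^{2^T}$, the two computations on generators that drive everything are: for each $i \in S\setminus Q$, $\pi(\cube_i^{(S)}(x,y)) = \cube_i^{(S\setminus Q)}(x,y)$ and $h(\cube_i^{(S\setminus Q)}(x,y)) = \cube_i^{(S)}(x,y)$; while for each $i \in Q$, $\pi(\cube_i^{(S)}(x,y))$ is a constant cube at some $z \in \{x,y\}$, which agrees with $\cube_j^{(S\setminus Q)}(z,z)$ for any $j \in S\setminus Q$ and hence lies in $M(\{\theta_i\}_{i\in S\setminus Q})$ by reflexivity of $\theta_j$.

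Part~(1) now falls out of Lemma~\ref{lem:basicdelta1}: $\pi$ sends each generator of $M(\{\theta_i\}_{i\in S})$ into $M(\{\theta_i\}_{i\in S\setminus Q})$, giving $\subseteq$, while $h$ sends each generator of $M(\{\theta_i\}_{i\in S\setminus Q})$ to a generator of $M(\{\theta_i\}_{i\in S})$ and satisfies $\pi \circ h = \mathrm{id}$, giving $\supseteq$. The $\subseteq$ direction of part~(2) is then immediate from Lemmas~\ref{lem:basicdelta2} and~\ref{prop:hcongenerate}(3):
\[
\cut_Q(\Delta(\{\theta_i\}_{i\in S}))_f = \cut_Q(\tc(M(\{\theta_i\}_{i\in S})))_f \subseteq \tc(\cut_Q(M(\{\theta_i\}_{i\in S}))_f) = \Delta(\{\theta_i\}_{i\in S\setminus Q}),
\]
where the last equality uses part~(1).

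For the remaining direction $\supseteq$ of part~(2), the idea is to show that $h^{-1}(\Delta(\{\theta_i\}_{i\in S}))$ is itself a $(|S\setminus Q|)$-dimensional congruence of $\A$. Being the $h$-preimage of a subalgebra, it is automatically a compatible subuniverse of $\A^{2^{S\setminus Q}}$; quasireflexivity, symmetry, and transitivity then reduce to checking that, for each $i \in S\setminus Q$, $h$ commutes with $\refl_i^j$, $\sym_i$, and with the glue operation $(\alpha,\beta) \mapsto \glue_{\{i\}}(\langle \faces_i^0(\alpha), \faces_i^1(\beta)\rangle)$. All three commutations follow because $h$ merely copies the $(S\setminus Q)$-slice across the $Q$-directions while these operations act solely on the $i$-th coordinate, with $i \in S\setminus Q$. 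Granted this, $h^{-1}(\Delta(\{\theta_i\}_{i\in S}))$ contains each generator $\cube_i^{(S\setminus Q)}(x,y)$ of $\Delta(\{\theta_i\}_{i\in S\setminus Q})$ (because its $h$-image $\cube_i^{(S)}(x,y)$ lies in $\Delta(\{\theta_i\}_{i\in S})$), hence all of $\Delta(\{\theta_i\}_{i\in S\setminus Q})$. Applying $\pi$ to $h(\Delta(\{\theta_i\}_{i\in S\setminus Q})) \subseteq \Delta(\{\theta_i\}_{i\in S})$ yields the desired containment. The main obstacle is simply the coordinate bookkeeping needed to verify the three commutation identities; once written out in terms of $g|_{S\setminus Q}$ and $g|_Q$ the identities are tautological, but care is required to keep the $Q$- and $(S\setminus Q)$-coordinates straight throughout.
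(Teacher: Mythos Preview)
Your proof is correct. Part~(1) and the $\subseteq$ direction of part~(2) match the paper's argument exactly, just phrased in terms of the explicit homomorphisms $\pi$ and $h$. For the $\supseteq$ direction of part~(2) you take a slightly different route: the paper observes (via Corollary~\ref{cor:deltaedgesarecongruences}) that $\cut_Q(\Delta(\{\theta_i\}_{i\in S}))_f$ is already a $(|S\setminus Q|)$-dimensional congruence containing $M(\{\theta_i\}_{i\in S\setminus Q})$, hence contains its transitive closure $\Delta(\{\theta_i\}_{i\in S\setminus Q})$; you instead verify directly that $h^{-1}(\Delta(\{\theta_i\}_{i\in S}))$ is a $(|S\setminus Q|)$-dimensional congruence by checking that $h$ commutes with $\refl_i^j$, $\sym_i$, and the gluing operation. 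These two approaches are dual---in fact, $(S)$-reflexivity of $\Delta(\{\theta_i\}_{i\in S})$ (Corollary~\ref{lem:hyperreflexive}) forces $h^{-1}(\Delta(\{\theta_i\}_{i\in S})) = \cut_Q(\Delta(\{\theta_i\}_{i\in S}))_f$, so you are proving the same containment by a hands-on commutation check rather than by invoking the earlier corollary. The paper's route is shorter because the relevant closure properties were already packaged; yours is more self-contained.
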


\begin{proof}
We first notice that $\cut_Q(\text{ })_f$ commutes with the term operations of $\A$. That is,
$
\cut_Q(s(\gamma_0, \dots, \gamma_{\sigma(s)-1}))_f 
= s(\cut_Q(\gamma_0)_f, \dots, \cut_Q(\gamma_{\sigma(s)-1})_f)
$
for every term $s \in \Clo(\A)$ and $\gamma_0, \dots, \gamma_{\sigma(s)-1} \in \Delta(\{\theta_i\}_{i \in S})$. Furthermore, we compute 
\[
\cut_Q(\cube_i(x,y))_f =
\begin{cases}
\cube_i(x,y) & \text{ if $i \in S\setminus Q$,}\\
\text{constant cube with value $x$}       & \text{ if $i \in Q$ and $f_i =0$, and}\\
\text{constant cube with value $y$}        & \text{ if $i \in Q$ and $f_i =1$}.
\end{cases}
\]
To establish \emph{(1)}, we apply Lemma \ref{lem:basicdelta1} and conclude that 
\begin{align*}
\cut_Q(M(\{\theta_i\}_{i \in S}))_f &=
\cut_Q \left(\Sg_{A^{2^S}}\bigg( \Union_{i\in S} \cube_i(\theta_i) \bigg)
\right)_f\\
&= \Sg_{A^{2^{S\setminus Q}}}
\bigg( \Union_{i\in S} \cut_Q(\cube_i(\theta_i))_f \bigg)\\
&= \Sg_{A^{2^{S\setminus Q}}}
\bigg( \Union_{i\in S\setminus Q} \cube_i(\theta_i) \bigg)\\
&= M(\{\theta_i\}_{i \in S \setminus Q}).
\end{align*}

To establish \emph{(2)}, we show that each of the two relations contains the other. Suppose that $\gamma \in \Delta(\{\theta_i\}_{i \in S})$. It follows from Lemma \ref{lem:basicdelta2} that $\gamma \in \tc(M(\{\theta_i\}_{i \in S}))$. We now apply \emph{(3)} of Lemma \ref{prop:hcongenerate} and conclude that 

\begin{align*}
\cut_Q(\gamma)_f &\in 
\cut_Q(\tc(M(\{ \theta_i \}_{i \in S}))_f\\
&\subseteq \tc(M(\{\theta_i\}_{i \in S \setminus Q}))\\
& = \Delta(\{\theta_i\}_{i \in S\setminus Q}).
\end{align*}
Therefore, $\cut_Q(\Delta(\{\theta_i\}_{i\in S}))_f \subseteq \Delta(\{\theta_i\}_{i \in S \setminus Q })$. 
For the other containment, we first note that 
$
M(\{\theta_i\}_{i \in S\setminus Q}) = \cut_Q(M(\{\theta_i\}_{i \in S}))_f \subseteq 
\cut_Q(\Delta(\{\theta_i\}_{i \in S}))_f 
$, so 
\[
\Delta(\{\theta_i\}_{i \in S\setminus Q}) = 
\tc(M(\{\theta_i\}_{i \in S\setminus Q})) \subseteq 
\tc(\cut_Q(\Delta(\{\theta_i\}_{i \in S}))_f).
\]
Corollary \ref{cor:deltaedgesarecongruences} indicates that $\cut_Q(\Delta(\{\theta_i\}_{i\in S})_f$ is a $(|S \setminus Q|)$-dimensional congruence of $\A$. Therefore, $\Delta(\{\theta_i\}_{i \in S \setminus Q }) \subseteq \cut_Q(\Delta(\{\theta_i\}_{i\in S}))_f .$

\end{proof}

%Now we will say a little about basic coordinate transformations. The following lemma suffices for this article.  
%\begin{lem}\label{lem:coordinatetransform}
%Let $\A$ be an algebra, $n \geq 2 $, and $(\theta_0, \dots, \theta_{n-1}) \in \Con(\A)^n$. Let $ Q = \{i_0, \dots, i_{|Q|-1} \}\subseteq n$ and let $f \in 2^{n \setminus Q}$. Suppose $\phi: |Q| \to Q$ is defined by $\phi(j) = i_j$. Let $\hat{\phi}: A^{2^{|Q|}} \to A^{2^Q}$ be defined so that $(\hat{\phi}(\gamma))_g = \gamma_{g\circ\phi}$, for every $g \in 2^Q$. Then, $\hat{\phi}$ is an isomorphism such that
%\begin{enumerate}
%\item $\hat{\phi}(\refl_i(\gamma)) = \refl_{\phi(i)}(\hat{\phi}(\gamma))$, and 
%\item $\hat{\phi}(\sym_i(\gamma)) = \sym_{\phi(i)}(\hat{\phi}(\gamma))$, for every $\gamma \in A^{2^{|Q|}}$ and $i \in |Q|$.
%\end{enumerate}
%The following also hold:
%\begin{enumerate}

%\item[(3)]  $\hat{\phi}: M(\theta_{i_0}, \dots, \theta_{i_{|Q|-1}}) \simeq \cut_{n \setminus Q}(M(\theta_0, \dots, \theta_{n-1}))_f $, and 
%\item[(4)] $\hat{\phi}: \Delta(\theta_{i_0}, \dots, \theta_{i_{|Q|-1}}) \simeq \cut_{n \setminus Q}(\Delta(\theta_0, \dots, \theta_{n-1}))_f $.
%end{enumerate}
%end{lem}
%\begin{proof}
%TODO:SKETCH A PROOF, revise if necessary!!!!!!!!!!!
%\end{proof}

\begin{defn}\label{def:centrality}
Let $\A$ be an algebra, $\delta \in \Con(\A)$, $S\finsub \nat$ with $|S| \geq 2$, and $i\in S$. We say that a $(|S|)$-dimensional relation $R$ on $A$ has  \textbf{$(\delta,i)$-centrality} if there is no $\gamma\in R$ such that exactly $2^{|S|-1}-1$ many vertices of $\Line_i(\gamma)$ are labeled by $\delta$-pairs.
\end{defn}

The relations that we consider here are usually $(S)$-symmetric. In this situation, the following lemma provides a useful method to check centrality. The proof is left to the reader.

\begin{lem}\label{lem:checkcentralitywithpivot}
Let $\A$ be an algebra, $\delta \in \Con(\A)$, $S\finsub \nat$ with $|S| \geq 2$, and $i\in S$. Suppose that a $(|S|)$-dimensional relation $R \subseteq A^{2^S}$ is $(S)$-symmetric. Then $R$ has $(\delta, i)$-centrality  if and only if the following condition holds:
\begin{enumerate}
\item[*] If $\gamma \in R$ is such that every $(i)$-supporting line of $\gamma$ is a $\delta$-pair, then the $(i)$-pivot line of $\gamma$ is also a $\delta$-pair.
\end{enumerate}

\end{lem}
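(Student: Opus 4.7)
I plan to prove the equivalence in two directions, with the backward implication $((\delta, i)\text{-centrality} \Rightarrow (*))$ reducing to a counting argument and the forward implication being the substantive content, where I exploit $(S)$-symmetry to move an arbitrary ``bad'' line of $\gamma$ into the pivot position.

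For the easy direction, I assume $(\delta, i)$-centrality and take some $\gamma \in R$ all of whose $(i)$-supporting lines are $\delta$-pairs. The cube $\lines_i(\gamma)$ has $2^{|S|-1}$ vertex labels, of which one is the pivot and $2^{|S|-1}-1$ are supporting. If the pivot line were not a $\delta$-pair, then exactly $2^{|S|-1}-1$ lines of $\lines_i(\gamma)$ would be $\delta$-pairs, contradicting centrality. Hence the pivot line is also a $\delta$-pair, and $(*)$ holds.

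For the forward direction I assume $(*)$ and suppose toward contradiction that some $\gamma \in R$ violates centrality, so there is a unique ``bad'' position $f \in 2^{S \setminus \{i\}}$ with $\lines_i(\gamma)_f \notin \delta$ while every other line is a $\delta$-pair. If $f = \textbf{1}$ then the pivot is bad and all supporting lines are $\delta$-pairs, immediately contradicting $(*)$. Otherwise $f \neq \textbf{1}$, and I use $(S)$-symmetry to transport the bad line to the pivot. Unwinding the definition of $\sym_j = \glue_{\{j\}}(\langle \faces_j^1, \faces_j^0 \rangle)$ gives $\sym_j(\gamma)_h = \gamma_{h \oplus \textbf{j}}$; for $j \in S \setminus \{i\}$ the flip only affects the non-$i$ coordinate, so
\[
\lines_i(\sym_j(\gamma))_g = \lines_i(\gamma)_{g \oplus \textbf{j}}.
\]
In other words, each $\sym_j$ with $j \neq i$ permutes the $(i)$-lines by flipping their $j$-th index, preserving $\delta$-pair status line by line. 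Composing one $\sym_j$ for each coordinate $j \in S \setminus \{i\}$ where $f_j = 0$ therefore produces, via Lemma \ref{lem:reflsymclosure}, some $\gamma^* \in R$ whose bad line sits at position $\textbf{1}$. Then $\gamma^*$ has every $(i)$-supporting line a $\delta$-pair but non-$\delta$-pair pivot, contradicting $(*)$.

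The main obstacle is purely the coordinate bookkeeping for the forward direction: confirming that $\sym_j$ acts on line-indices by flipping coordinate $j$ (so preserving lines in direction $i$), and that the iterated composition drives $f$ to $\textbf{1}$. Both reductions are routine unwindings of the $\glue$/$\faces$ definitions and require no additional machinery.
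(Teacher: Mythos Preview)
Your proof is correct. The paper leaves this lemma to the reader, so there is no explicit argument to compare against; your approach---a counting observation for one direction and, for the other, using the $\sym_j$ with $j\neq i$ to carry the unique non-$\delta$ line to the pivot position---is exactly the intended routine verification.
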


We now define two commutators. They share one essential feature: both are defined with respect to a centrality condition that is quantified over an $(n)$-dimensional relation for some $n\geq 2$.

\begin{defn}\label{def:tccomm}
Let $\A$ be an algebra and $S \finsub \nat$ with $|S|\geq 2$. Let $\{\theta_i\}_{i \in S} \subseteq \Con(\A)$ be an $S$-indexed set of congruences. Let $k$ be the greatest element of $S$. We define

\begin{align*}
[\{\theta_i\}_{i \in S}]_{TC} &= 
\Meet \{\delta: M(\{\theta_i\}_{i \in S}) \text{ has } (\delta, k)\text{-centrality} \} \\
[\{\theta_i\}_{i \in S}]_{H} &= \Meet \{\delta: \Delta(\{\theta_i\}_{i \in S}) \text{ has } (\delta, k)\text{-centrality} \}.
\end{align*}
We call these operations the $(|S|)$-ary \textbf{term condition commutator} and \textbf{hypercommutator}, respectively. In case $S = \{m, \dots, n-1\} =n\setminus m$, we use the notation
$[\theta_m, \dots, \theta_{n-1}]_{TC}$ and $[\theta_m, \dots, \theta_{n-1}]_H$ for these operations.

\end{defn}

\begin{thm}\label{thm:basicpropertiescommutator}
Let $\A$ be an algebra, $n\geq 2$, and $\theta_0, \dots, \theta_{n-1}, \gamma_0, \dots, \gamma_{n-1} \in \Con(\A)$ with $\theta_0 \leq \gamma_0, \dots, \theta_{n-1} \leq \gamma_{n-1}$. The following hold for both the term condition commutator and the hypercommutator: 
\begin{enumerate}
\item $[\theta_0, \dots, \theta_{n-1}] \leq \Meet_{i\in n} \theta_i$,
\item $[\theta_0, \dots, \theta_{n-1}] \leq [\gamma_0, \dots, \gamma_{n-1}] $, and
\item $[\theta_0, \dots, \theta_{n-1}] \leq [\theta_1, \dots, \theta_{n-1}].$
\end{enumerate}
The following also holds: 
\begin{enumerate}
\item[(4)] $[\theta_0, \dots, \theta_{n-1}]_{TC} \leq [\theta_0, \dots, \theta_{n-1}]_H$
\end{enumerate}
\end{thm}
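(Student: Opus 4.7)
The strategy rests on a general monotonicity principle: if $R \leq R'$ in $\Con_S(\A)$, then any $\delta$ yielding $(\delta,k)$-centrality to $R'$ also yields it to $R$, because a cube in $R$ witnessing a violation would witness one in $R'$ as well. Consequently, the commutator computed from $R$ is bounded above by the one computed from $R'$. Items (2) and (4) follow at once: $\theta_i \leq \gamma_i$ gives $\cube_i(\theta_i) \subseteq \cube_i(\gamma_i)$, hence $M(\{\theta_i\}) \leq M(\{\gamma_i\})$ and $\Delta(\{\theta_i\}) \leq \Delta(\{\gamma_i\})$; and Lemma \ref{lem:basicdelta0} gives $M(\{\theta_i\}) \leq \Delta(\{\theta_i\})$. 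For item (3), the two relations $M(\theta_0, \dots, \theta_{n-1})$ and $M(\theta_1, \dots, \theta_{n-1})$ live in different cube arities, so I would instead invoke Lemma \ref{lem:deltaprojectstodelta}, which (for $Q=\{0\}$ and $f = \langle 0,1\rangle$) identifies $\faces_0^1(M(\theta_0, \dots, \theta_{n-1})) = M(\theta_1, \dots, \theta_{n-1})$, and likewise for $\Delta$. Since $M$ and $\Delta$ are $(S)$-symmetric, Lemma \ref{lem:checkcentralitywithpivot} allows centrality to be checked via the pivot condition. The $(n-1)$-pivot line of any $\gamma$ is indexed by $\mathbf{1}_{\{0,\dots,n-2\}}$, whose coordinate at position $0$ equals $1$, so it coincides with the $(n-1)$-pivot line of $\faces_0^1(\gamma)$, and the $(n-1)$-supporting lines of $\faces_0^1(\gamma)$ form a subfamily of those of $\gamma$. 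Any violation of $(\delta,n-1)$-centrality by $\gamma$ therefore restricts to a violation by $\faces_0^1(\gamma)$, and (3) follows by the same monotonicity reasoning.

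For item (1), set $k = n-1$. The containment $[\cdots] \leq \theta_k$ is vacuous: $R \leq \rect(\theta_0, \dots, \theta_{n-1})$ forces every $(k)$-line to be a $\theta_k$-pair, so no cube has exactly $2^{n-1}-1$ many $\theta_k$-pair lines among its $(k)$-lines. For $i < k$, I would verify $(\theta_i, k)$-centrality via Lemma \ref{lem:checkcentralitywithpivot}: assume every $(k)$-supporting line of some $\gamma \in R$ is a $\theta_i$-pair. Let $a, b, c \in 2^S$ be obtained from $\mathbf{1}$ by flipping position $k$, position $i$, and both $i$ and $k$, respectively. Then $(\gamma_b, \gamma_\mathbf{1})$ is the $(i)$-pivot line and $(\gamma_c, \gamma_a)$ is an $(i)$-supporting line, both $\theta_i$-related since $R \leq \rect$; and $(\gamma_c, \gamma_b)$ is a $(k)$-supporting line (its index $b|_{S\setminus\{k\}}$ differs from $\mathbf{1}$ at $i$), hence a $\theta_i$-pair by hypothesis. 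Transitivity of $\theta_i$ through $\gamma_b$ and $\gamma_c$ produces $(\gamma_a, \gamma_\mathbf{1}) \in \theta_i$, which is precisely the $(k)$-pivot line.

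The main obstacle is keeping cube coordinates straight in (1) and (3); once the right vertices and indices are identified, each argument reduces to a short transitivity chain or a face restriction. No Taylor-term hypothesis or deeper theory of higher congruences is invoked.
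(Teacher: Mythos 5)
Your proposal is correct and follows essentially the same route as the paper: the same downward-closure/monotonicity principle for (2) and (4), the same face-restriction argument via Lemma \ref{lem:deltaprojectstodelta} for (3), and the same transitivity chain around the $(i,n-1)$-pivot square inside $\rect(\theta_0,\dots,\theta_{n-1})$ for (1). The only (harmless) differences are that the paper simply cites \cite{aichmud} for the term-condition versions of (1)--(3) while your argument treats both commutators uniformly, and that you verify $(\theta_i,n-1)$-centrality coordinatewise rather than $(\Meet_i\theta_i,n-1)$-centrality all at once.
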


\begin{proof}
Properties \emph{(1)}-\emph{(3)} are already known to hold for the term condition commutator, see \cite{aichmud}. Let us establish that they hold for the hypercommutator. 

To show \emph{(1)}, set $\delta = \Meet_{i\in n} \theta_i$. We must verify that $\Delta(\theta_0, \dots, \theta_{n-1})$ has $(\delta, n-1)$-centrality, and will apply the criterion established by Lemma \ref{lem:checkcentralitywithpivot} to do so. Take \newline $\mu \in \Delta(\theta_0, \dots, \theta_{n-1})$ with the property that every $(n-1)$-supporting line is a $\delta$-pair. We want to show that the $(n-1)$-pivot line of $\mu$ is a $\theta_j$-pair, for every $j \in n$. This holds for $j = n-1$, because Lemma \ref{lem:basicdelta0} indicates that $\mu \in \rect(\theta_0, \dots, \theta_{n-1})$. For $j \neq n-1$, consider the $(j, n-1)$-pivot square
\[
\squares_{j, n-1}(\mu)_\textbf{1} = 
\begin{tikzpicture}
    [ baseline=(center.base), font=\small,
      every node/.style={inner sep=0.25em}, scale=1 ]
    \node at (0.5,-0.5) (center) {\phantom{$\cdot$}}; % phantom = black magic
    \path (0,0)   node (nw) {$d$}
      -- ++(1,0)  node (ne) {$c$}
      -- ++(0,-1) node (se) {$b$}
      -- ++(-1,0) node (sw) {$a$};
    \draw (nw) -- (ne) -- (se) -- (sw) -- (nw);
    
    \draw (sw) edge[bend left] node[left] {$\delta$} (nw);
    
    \end{tikzpicture}.
\]
The pair $\langle a,d \rangle $ is an $(n-1)$-supporting line of $\mu$ and is therefore a $\delta$-pair. We have indicated this with a curved line. The $(n-1)$-pivot line of $\mu$ is the pair $\langle b,c \rangle$. Because $\gamma \in \rect(\theta_0, \dots, \theta_{n-1})$, it follows that $\langle a,b \rangle, \langle c, d \rangle \in \theta_j$. Therefore, $\langle b, c \rangle \in \theta_j$. 

To show \emph{(2)} and \emph{(4)}, it is enough to note that 
\[M(\theta_0, \dots, \theta_{n-1}) \leq \Delta(\theta_0, \dots, \theta_{n-1}) \leq \Delta(\gamma_0, \dots, \gamma_{n-1}),
\] and that the set $\{ R \subseteq A^{2^n}: \text{$R$ has $(\delta, n-1)$-centrality} \}$ is downward closed, for every $\delta \in \Con(\A)$.  

To see that \emph{(3)} holds, suppose that $\delta \in \Con(\A)$ is such that $\Delta(\theta_1, \dots, \theta_{n-1})$ has $(\delta, n-1)$-centrality. Take $\gamma \in \Delta(\theta_0, \dots, \theta_{n-1})$ such that every $(n-1)$-supporting line of $\gamma$ is a $\delta$-pair. It follows that every $(n-1)$-supporting line of $\faces_0^1(\gamma)$ is a $\delta$-pair. Lemma \ref{lem:deltaprojectstodelta} indicates that $\faces_0^1(\Delta(\theta_0, \dots, \theta_{n-1})) = \Delta(\{\theta_i\}_{i \in n \setminus \{0\}}) = \Delta(\theta_1, \dots, \theta_{n-1})$. We apply the assumption that $\Delta(\theta_1, \dots, \theta_{n-1})$ has $(\delta, n-1)$-centrality and conclude that $\lines_{n-1}(\faces_0^1(\gamma))_\textbf{1} = \lines_{n-1}(\gamma)_\textbf{1}$ is a $\delta$-pair. We have shown that $\Delta(\theta_0, \dots, \theta_{n-1})$ also has $(\delta, n-1)$-centrality, so the proof is finished. 
\end{proof}

\subsection{Nilpotence and Supernilpotence}\label{subsec:nilpotenceandsupernilpotence}

Let $\A$ be an algebra and let $\theta \in \Con(\A)$. Recursively define
over $\nat$ the congruences $[\theta)_0 \coloneqq \theta$, and
\[
  (\theta]_{n+1} \coloneqq \big[ \theta, (\theta]_n \big]_{TC}
\]
to produce a descending chain called the \textbf{lower central series} of $\theta$: 
\[
  (\theta_0] \geq (\theta]_1 \geq \dots \geq (\theta]_n \geq \dots.
\]
If $(\theta]_n=0$, then $\theta$ is said to be
\textbf{$(n)$-step left nilpotent}.
A congruence $\theta$ of $\A$ is said to be \textbf{$(n)$-step
supernilpotent} if it satisfies 
\[
  [\underbrace{\theta, \dots, \theta}_{(n-1)}]_{TC} = 0.
\]

\section{The binary and ternary cases}\label{sec:lowaritycase}

\subsection{Proof of H=TC for the binary and ternary cases}\label{subsec:biterH=TC}

Theorem \ref{thm:basicpropertiescommutator} indicates that the hypercommutator is always an upper bound for the term condition commutator of the same arity. In this section we will show that 

\[
[\theta, \theta]_H \leq [\theta, \theta]_{TC} 
\qquad \text{and} \qquad 
[\theta, \theta, \theta]_H \leq [\theta, \theta, \theta]_{TC}
\]
if $\theta$ is a congruence of a Taylor algebra (see the beginning of Section \ref{sec:higherarities}.) Indeed, we will demonstrate that $\Delta(\theta, \theta)$ has $(\delta, i)$-centrality for each $i \in 2$ whenever $M(\theta, \theta)$ has $(\delta, i)$-centrality for each $i \in 2$. The idea for the proof will generalize to any dimension. We want to point out that the key to the argument is inspired by Lemma 4.4 in \cite{kearnesszendreirel}.

\begin{lem}\label{lem:binaryh=tc}
Let $\var $ be a variety with Taylor term $t$. Let $\A \in \var$, $\theta, \delta \in \Con(\A)$, and $j \in 2$. Suppose $R$ is a $2$-dimensional tolerance of $\A$ such that $M(\theta, \theta) \leq R \leq \rect(\theta, \theta)$ and $R$ has $(\delta, i)$-centrality for each $i \in 2$. Then, $R^{\circ_j}$ has $(\delta, i)$-centrality for each $i\in 2$.

\end{lem}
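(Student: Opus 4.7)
The argument splits into two cases according to whether the centrality direction $i \in \{0,1\}$ coincides with the gluing direction $j$ or is orthogonal to it. In both cases a $\gamma \in R^{\circ_j}$ is witnessed by a finite chain $\alpha_0, \ldots, \alpha_{k-1} \in R$ satisfying $\faces_j^1(\alpha_l) = \faces_j^0(\alpha_{l+1})$ for $l < k-1$, together with $\faces_j^0(\gamma) = \faces_j^0(\alpha_0)$ and $\faces_j^1(\gamma) = \faces_j^1(\alpha_{k-1})$. The relation $R^{\circ_j}$ inherits $(S)$-symmetry from $R$, so by Lemma \ref{lem:checkcentralitywithpivot} it suffices to show that whenever the supporting $i$-line of $\gamma$ is a $\delta$-pair, so is the pivot $i$-line.

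For $i = 1-j$, the argument is by iterated application of the centrality of $R$. The supporting and pivot $i$-lines of $\gamma$ coincide with $\faces_j^0(\alpha_0)$ and $\faces_j^1(\alpha_{k-1})$ respectively. If the supporting $i$-line is a $\delta$-pair, then the $(\delta, i)$-centrality of $R$ applied to $\alpha_0$ shows that $\faces_j^1(\alpha_0) = \faces_j^0(\alpha_1)$ is a $\delta$-pair; iterating through the chain yields $\faces_j^1(\alpha_{k-1}) = \faces_j^1(\gamma) \in \delta$, as required.

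For $i = j$ the argument is more delicate and is where the Taylor hypothesis enters. It suffices to handle the length-two case $k = 2$: the class of relations satisfying the hypotheses is closed under one-step gluing in direction $j$ (gluing preserves both $M(\theta,\theta) \leq \cdot \leq \rect(\theta,\theta)$ and the tolerance structure), so a straightforward induction on chain length reduces the general case to length two. Writing $\alpha_0 = (a_0, a_1; c_0, c_1)$ and $\alpha_1 = (a_1, a_2; c_1, c_2)$ with direction $j$ running left-to-right, the supporting $j$-line of $\gamma$ is $(a_0, a_2)$ and the pivot $j$-line is $(c_0, c_2)$. We assume $(a_0, a_2) \in \delta$ and must show $(c_0, c_2) \in \delta$. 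The individual segments $(a_0, a_1)$ and $(a_1, a_2)$ carry no direct $\delta$-information, so the centrality of $R$ cannot be applied to $\alpha_0$ or $\alpha_1$ alone. Following the strategy of \cite[Lemma 4.4]{kearnesszendreirel}, I would apply the Taylor term $t$ componentwise to $\alpha_0$, $\alpha_1$, and a judiciously chosen family of auxiliary matrices belonging to $M(\theta,\theta) \leq R$ and built from the generators $\cube_0(\theta) \cup \cube_1(\theta)$, with corners drawn from the six-element set $\{a_0, a_1, a_2, c_0, c_1, c_2\}$. Choosing the auxiliary matrices so that the Taylor identities force every occurrence of the intermediate values $a_1$ and $c_1$ to collapse, one obtains a single element of $R$ whose supporting $j$-line is $(a_0, a_2)$ and whose pivot $j$-line is $(c_0, c_2)$; applying the $(\delta, j)$-centrality of $R$ to this matrix then gives $(c_0, c_2) \in \delta$.

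The main obstacle is the explicit construction in the previous paragraph. In a Mal'cev variety the classical ternary identity $m(x, y, y) = m(y, y, x) = x$ makes the collapse immediate---in fact $R^{\circ_j} = R$ in that setting---but the Taylor setting supplies only a weaker package. I expect that Ol\v{s}\'ak's six-variable term, with its rich system of coincidences among Boolean evaluations, will provide enough identities to carry out the collapse, though the precise choice of auxiliary matrices and matching Taylor identity is the technical heart of the proof.
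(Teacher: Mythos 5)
Your perpendicular case ($i = 1-j$) is exactly the paper's argument and is correct. The gap is in the parallel case ($i = j$), which is the entire content of the lemma. The mechanism you propose --- collapsing the intermediate values so as to obtain ``a single element of $R$ whose supporting $j$-line is $(a_0,a_2)$ and whose pivot $j$-line is $(c_0,c_2)$'' --- cannot work: for a $2$-dimensional cube those two lines exhaust all four vertices, so such an element would be $\gamma$ itself, and you would in effect be proving $R^{\circ_j}=R$. That is the Mal'cev conclusion and it fails already for semilattices, which are Taylor; no choice of auxiliary matrices and no stronger Taylor term (Ol\v{s}\'ak's included) can produce it.

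The paper's argument has a different structure and needs no reduction to chains of length two. Writing $\langle a,b\rangle$ for the pivot $(j)$-line of $\gamma\in R^{\circ_j}$, one connects $a$ to $b$ by the $\delta$-chain $t(a,\dots,a),\, t(b,a,\dots,a),\, \dots,\, t(b,\dots,b)$ afforded by the idempotence of $t$. Each link has the form $\langle t_e(a,\eta), t_e(b,\eta)\rangle$ with $\eta\in\{a,b\}^{\sigma(t)-1}$ and is shown to be a $\delta$-pair by applying the $(\delta,j)$-centrality of $R$ to a square lying in $M(\theta,\theta)\leq R$ whose supporting $(j)$-line is $\langle t_e(a,\phi_e(\cdot,\cdot)), t_e(b,\phi_e(\cdot,\cdot))\rangle$; that supporting line is in turn shown to be a $\delta$-pair by applying $t_e$ to $\gamma$ and to reflections and symmetries of $\gamma$ (all of which lie in the tolerance $R^{\circ_j}$), invoking the identity $t_e(x,\phi_e(x,y))\approx t_e(y,\psi_e(x,y))$ to make one $(1-j)$-line of the resulting square degenerate, and then using the $(\delta,1-j)$-centrality of $R^{\circ_j}$ that you already established in the perpendicular case. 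This interplay --- the parallel stage for $R^{\circ_j}$ feeding on the perpendicular stage for $R^{\circ_j}$ itself, not merely on properties of $R$ --- is the idea your sketch is missing, and it is what lets a bare idempotent Taylor term suffice.
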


\begin{proof}

We assume without loss assume that $j =0$. The proof will refer to the items listed in Figure \ref{fig:binTC=HC}. Before we begin, we remark that item $(0)$ shows the orientation of coordinates, and that any pair of elements that belongs to $\delta$ is connected with a curved line. A typical element of $R^{\circ_0}$ is shown in item $(1)$. Now assume that $\langle a,c \rangle \in \delta$, as shown in item $(2)$. An induction using that $R$ has $(\delta, 1)$-centrality is illustrated with dotted curved lines, and it follows that $\langle b,d \rangle \in \delta$. 
Therefore, $R^{\circ_0}$ has $(\delta, 1)$-centrality.

Next we show that $R^{\circ_0}$ has $(\delta, 0)$-centrality. Assume that $\langle c, d\rangle \in \delta$, as depicted on the left-hand side of the implication depicted in item $(3)$. Suppose that the Taylor identity that $t$ satisfies in its first coordinate is given by

$$ t(x, \phi(x, y)) \approx t(y, \psi(x,y)),$$
where $\phi(x,y)$ and $\psi(x,y)$ denote tuples in the variables $x,y$. It follows from the compatibility, $(2)$-reflexivity, and $(2)$-symmetry of $R$ that the right-hand side of the implication depicted in item $(3)$ belongs to $R^{\circ_0}$. We observed earlier that $R^{\circ_0}$ has $(\delta, 1)$-centrality, and this along with the equality $t(b, \phi(b,d)) = t(d, \psi(b,d))$ implies that $\langle t(a, \phi(b,d)), t(c, \psi(b,d)) \rangle \in \delta$. Therefore, all of the labels of this square belong to the same $\delta$-class. In particular, we conclude that $\langle t(a, \phi(b,d)) , t(b, \phi(b,d) \rangle $ is a $\delta$-pair.

Now, let $\eta(a,b) \in \{a,b\}^{\sigma(t)-1}$. Because $R \leq \rect(\theta, \theta)$, we know that $a,b,d$ all belong to the same $\theta$-class. We assume also that $M(\theta, \theta) \leq R$, hence the square shown in item \emph{(4)} belongs to $R$. Because $R$ is assumed to have $(\delta, 0)$-centrality, we conclude that $\langle t(a, \eta(a,b)), t(b, \eta(a,b)) \rangle $ is a $\delta$-pair.

This line of reasoning can be duplicated for each coordinate of the Taylor term $t$. Therefore, we construct a $\delta$-chain that connects $a$ to $b$, see item $(5)$. This demonstrates that $R^{\circ_0}$ has $(\delta,0)$-centrality.

\end{proof}

%\begin{prop}\label{prop:deltahastccenbinary}
%Let $\var$ be a Taylor variety and let $\A \in \var$. Let $\theta \in \Con(\A)$ and set $\delta = [\theta, \theta]_{TC}$. The relation $\Delta(\theta, \theta)$ has $(\delta, i)$-centrality for each $i\in 2$. 
%\end{prop}

%\begin{proof}
%By the definition of the term condition commutator, the relation $M(\theta, \theta)$ has $(\delta, i)$-centrality for each $i\in 2$. Furthermore, $M(\theta, \theta)$ is both $2$-reflexive and $2$-symmetric. It follows by an induction using Lemma \ref{lem:binaryh=tc} that $\tc_i(M(\theta, \theta))$ has these same properties, so it follows that 

%\[
%\Delta(\theta, \theta) = \Union_{i\in \omega} \tc_i(M(\theta, \theta))
%\]
%has $(\delta, i)$-centrality for each $i\in 2$.
%\end{proof}

\begin{thm}\label{thm:binarytc=h}
For $\var$ be a Taylor variety, $\A \in \var$, and $\theta \in \Con(\A)$,
\[
[\theta, \theta]_H = [\theta, \theta]_{TC}.
\]
\end{thm}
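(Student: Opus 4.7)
The plan is to establish the reverse inequality $[\theta,\theta]_H \leq [\theta,\theta]_{TC}$; the other direction is Theorem \ref{thm:basicpropertiescommutator}(4). The first step is to set $\delta = [\theta,\theta]_{TC}$, so that by definition $M(\theta,\theta)$ has $(\delta, 1)$-centrality. Because both arguments of $M(\theta,\theta)$ coincide, the generating set $\cube_0(\theta) \cup \cube_1(\theta)$ is preserved under the coordinate swap interchanging $0$ and $1$, and hence so is $M(\theta,\theta)$; since this swap also exchanges the two directions in which centrality is tested, $M(\theta,\theta)$ automatically has $(\delta, 0)$-centrality as well.

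The next step is to pass from $M(\theta,\theta)$ to $\Delta(\theta,\theta)$ via the transitive closure construction. By Lemma \ref{lem:basicdelta2}, $\Delta(\theta,\theta) = \tc(M(\theta,\theta)) = \bigcup_{j \in \nat} \tc_j(M(\theta,\theta))$, where the $\tc_j$ are produced by alternately applying the closures $\circ_0$ and $\circ_1$ to $M(\theta,\theta)$. A routine induction on $j$ using Lemma \ref{prop:hcongenerate}(1) shows that each $\tc_j(M(\theta,\theta))$ is a $2$-dimensional tolerance. Moreover, $\rect(\theta,\theta)$ is itself a $2$-dimensional congruence, so its faces are already transitively closed; therefore the entire chain of tolerances $\tc_j(M(\theta,\theta))$ sits inside $\rect(\theta,\theta)$.

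These observations place us inside the hypotheses of Lemma \ref{lem:binaryh=tc}. A second induction on $j$ then shows that each $\tc_j(M(\theta,\theta))$ has $(\delta, i)$-centrality for both $i \in 2$: the base case $j = 0$ is Lemma \ref{lem:binaryh=tc} applied to $R = M(\theta,\theta)$, and the inductive step applies the same lemma to $R = \tc_{j-1}(M(\theta,\theta))$. Since centrality is inherited by unions of relations, $\Delta(\theta,\theta)$ also has $(\delta, 1)$-centrality, and therefore $[\theta,\theta]_H \leq \delta = [\theta,\theta]_{TC}$.

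The substantive content has been packaged inside Lemma \ref{lem:binaryh=tc}, so the theorem itself is largely assembly. The one subtle point to watch is the initial symmetry argument, which uses that both arguments of $M(\theta,\theta)$ equal $\theta$ in order to lift $(\delta, 1)$-centrality to $(\delta, 0)$-centrality; this is the main obstacle to extending the present argument directly to an asymmetric commutator of the form $[\theta_0,\theta_1]_H$.
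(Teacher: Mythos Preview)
Your proof is correct and follows essentially the same route as the paper's own argument: set $\delta=[\theta,\theta]_{TC}$, run an induction on the stages $\tc_j(M(\theta,\theta))$ using Lemma~\ref{lem:binaryh=tc}, and conclude that $\Delta(\theta,\theta)$ has $(\delta,i)$-centrality for both $i$. You are in fact slightly more explicit than the paper about the base case: the paper simply asserts the induction goes through, while you spell out the coordinate-swap symmetry argument needed to get $(\delta,0)$-centrality of $M(\theta,\theta)$ from the defining $(\delta,1)$-centrality, and you correctly identify this as the point where the constant-tuple hypothesis is used.
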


\begin{proof} 
By Theorem \ref{thm:basicpropertiescommutator}, the binary hypercommutator always lies above the binary term condition commutator. We show that $[\theta, \theta]_H \leq [\theta, \theta]_{TC}$. Set $\delta = [\theta, \theta]_{TC}$. It suffices to check that $\Delta(\theta, \theta)$ has $(\delta, i)$-centrality, for each $i \in 2$. 

We proceed by induction. For each $j \in \nat$ set  $R_j = \tc_j(M(\theta, \theta))$. It follows inductively from \emph{(1)} of Lemma \ref{prop:hcongenerate} that each $R_j$ is a $(2)$-dimensional tolerance such that $M(\theta, \theta) \leq R \leq \rect(\theta, \theta)$. Using this, it follows inductively from Lemma \ref{lem:binaryh=tc} that each $R_j$ has $(\delta, i)$-centrality for all $i \in 2$. Because 
$
\Delta(\theta, \theta) = \Union_{j \in \nat} R_j,
$
the proof is finished. 
\end{proof}

The proof of Theorem \ref{thm:binarytc=h} has a structure which provides a template for the higher arity cases. The following is a list of the essential steps and their names.

\begin{enumerate}
 
\item \textbf{Inductive Assumption:} Assume that $R$ is an $(n)$-dimensional tolerance such that $\M(\theta, \dots, \theta) \leq R \leq \rect(\theta, \dots, \theta) \leq \A^{2^n}$ and $R$ has $(\delta, l)$-centrality for all $l\in n$.

\item \textbf{Perpendicular Stage:} Establish that $R^{\circ_j} $ has $(\delta, i)$-centrality for $i \neq j $.
\item  \textbf{Parallel Stage:} Establish that $R^{\circ_j}$ has $(\delta, j)$-centrality.

\end{enumerate}
Next, we illustrate this proof template in the $(3)$-dimensional case. 

\begin{lem}\label{lem:ternaryh=tc}
Let $\var $ be a variety with Taylor term $t$ and let $\A \in \var$. Let $\theta, \delta \in \Con(\A)$ and $j \in 3$. Let $R$ be a $(3)$-dimensional relation such that $ M(\theta, \theta, \theta) \leq R \leq \rect(\theta, \theta, \theta)$ and $R$ has $(\delta, i)$-centrality for each $i\in 3$. Then, the relation $R^{\circ_j}$ 
has $(\delta, i)$-centrality for each $i\in 3$.

\end{lem}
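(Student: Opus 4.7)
The plan is to follow the template established immediately after Theorem \ref{thm:binarytc=h}: fix $j \in 3$ (by the $(S)$-symmetry of $R$ we may assume $j = 0$), and separately handle the \textbf{Perpendicular Stage}, i.e., $R^{\circ_0}$ has $(\delta, i)$-centrality for $i \in \{1, 2\}$, and the \textbf{Parallel Stage}, i.e., $R^{\circ_0}$ has $(\delta, 0)$-centrality. For the Perpendicular Stage, fix $i \in \{1, 2\}$ and take $\eta \in R^{\circ_0}$ whose $(i)$-supporting lines are all $\delta$-pairs. Because $R^{\circ_0}$ is the ascending union of iterated $\circ_0$-compositions of $R$, it suffices to show that every composition $\eta = \gamma_0 \circ \cdots \circ \gamma_k$ (with each $\gamma_l \in R$ and $\faces_0^1(\gamma_l) = \faces_0^0(\gamma_{l+1})$) inherits $(\delta, i)$-centrality, which I would verify by induction on $k$. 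The base case $k = 0$ is immediate from $(\delta, i)$-centrality of $R$. In the inductive step, write $\eta = \gamma_0 \circ \eta'$ with $\eta'$ a $(k-1)$-fold composition; the strategy is to apply $(\delta, i)$-centrality of $R$ to $\gamma_0$ in order to propagate $\delta$-pairs into the shared interior face $Z = \faces_0^1(\gamma_0) = \faces_0^0(\eta')$, and then invoke the inductive hypothesis on $\eta'$.

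For the Parallel Stage I would emulate the binary argument with $t$ playing the same role as in Lemma \ref{lem:binaryh=tc}. Let $\eta \in R^{\circ_0}$ have all $(0)$-supporting lines being $\delta$-pairs and write a Taylor identity $t(x, \phi(x, y)) \approx t(y, \psi(x, y))$ satisfied by $t$ in its first argument. Using compatibility of $R^{\circ_0}$ with $t$ together with $(S)$-reflexivity and $(S)$-symmetry, construct an element of $R^{\circ_0}$ all of whose $(i)$-supporting lines (for a chosen perpendicular $i$) are $\delta$-pairs, so that the Perpendicular Stage forces the corresponding $(i)$-pivot line of this constructed cube to be a $\delta$-pair. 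The equation $t(b, \phi(b, d)) = t(d, \psi(b, d))$ evaluated at appropriate vertices then yields a $\delta$-pair between two specific Taylor-term expressions along the $(0)$-pivot line of $\eta$. Finally, using $M(\theta, \theta, \theta) \leq R$ to splice auxiliary matrices into $R^{\circ_0}$, and iterating the construction across each of $t$'s Taylor identities, one builds a $\delta$-chain from one endpoint of the $(0)$-pivot line of $\eta$ to the other.

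The main obstacle is the propagation of $\delta$-pairs into the interior face $Z$ during the Perpendicular Stage. In two dimensions this interior is a single $A^2$-pair, and one application of $(\delta, 1)$-centrality of $R$ to $\gamma_0$ suffices to pass the $\delta$-relation along the chain. In three dimensions $Z$ is a full $2$-cube with \emph{multiple} lines in direction $i$, so a single application of $(\delta, i)$-centrality to $\gamma_0$ cannot immediately put all of the relevant lines of $Z$ into $\delta$. Overcoming this will likely require a secondary induction that coordinates the different $(\delta, i)$-centralities of $R$ across directions, or a finer construction of auxiliary $3$-cubes via the Taylor term (in the spirit of the Parallel Stage) that derives the missing interior $\delta$-pairs before the inductive hypothesis on $\eta'$ is applied. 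This is the crucial technical step, and its resolution here should foreshadow the general $n$-dimensional argument.
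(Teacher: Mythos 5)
There is a genuine gap, and it sits exactly where you flag it: the perpendicular stage. Your plan is to induct on the length of the chain $\gamma_0 \circ \cdots \circ \gamma_k$, applying $(\delta,i)$-centrality of $R$ to $\gamma_0$ so as to push $\delta$-pairs into the interior face $Z = \faces_0^1(\gamma_0)$ and then invoking the inductive hypothesis on the tail. But that first application is already unavailable. For a three-dimensional link $\glue_{\{0\}}(\langle \mu_r, \mu_{r+1}\rangle)$, the $(1)$-supporting lines consist of \emph{both} $(1)$-cross-section lines of $\mu_r$ \emph{and} one line of $\mu_{r+1}$; the hypothesis on the element of $R^{\circ_0}$ only constrains lines lying in the two \emph{end} faces $\mu_0$ and $\mu_{s-1}$ and says nothing about the interior faces $\mu_1,\dots,\mu_{s-2}$. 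So the link-by-link propagation cannot even begin, let alone be completed. (In the binary case this issue is invisible because each link has a single supporting line, lying entirely in the preceding face.) You correctly diagnose the difficulty in your final paragraph but leave its resolution as something that will "likely require" a secondary induction or a finer Taylor-term construction; that unresolved step is the entire content of the lemma, and it is the one place the Taylor hypothesis is genuinely needed.

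The paper's resolution is to first \emph{rotate} the whole chain with the Taylor term: replacing each $\mu_r$ by a suitable $t_e(\mu_r,\dots)$ as in Definition \ref{def:rotations} produces a new chain of elements of $R$ in which every supporting line of every \emph{interior} face in the new transverse direction is a \emph{constant} pair (hence trivially a $\delta$-pair), while the Taylor identity together with transitivity of $\delta$ converts the known $\delta$-pairs on $\mu_0$ into $\delta$-pairs on the rotated $\mu_0$; the link-by-link induction then goes through in direction $2$ using $(\delta,2)$-centrality of $R$. The conclusion is transferred back to direction $1$ of the original chain via the companion cubes of Definition \ref{def:companions} (this is where $M(\theta,\theta,\theta)\le R$ is used) and a $\delta$-chain over the $\sigma(t)$ coordinates of $t$, exploiting idempotence. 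Your parallel stage is closer to the paper's, but it too elides a real issue: the auxiliary square needed in item $(3')$ of Figure \ref{fig:ternarypar} need not belong to $M(\theta,\theta)$, and the paper requires a second, symmetric pass (item $(5')$) to close that loop. As written, the proposal identifies the right obstacles but does not prove the lemma.
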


\begin{proof}
The main steps of the proof are illustrated in Figures \ref{fig:ternaryperp} and \ref{fig:ternarypar}. Without loss, we assume that $j=0$. We begin with the perpendicular stage and refer to Figure \ref{fig:ternaryperp}. Item $(0)$ illustrates the orientation of coordinates. We want to show that $R$ has $(\delta, i)$-centrality for each $i\neq 0$. Without loss, take $i=1$. A typical element of $R^{\circ_0}$ is depicted in item $(1)$. The left hand side of the implication in item $(2)$ illustrates the assumption that 
\[
\langle a_0, c_0 \rangle, \langle b_0, d_0 \rangle, \langle a_1, c_1 \rangle \in \delta.
\]
We want to show that $\langle b_1, d_1 \rangle \in \delta$. Suppose that the identity that $t$ satisfies in the first coordinate is given by

$$ t(x, \phi(x, y)) \approx t(y, \psi(x,y)),$$
where $\phi(x,y)$ and $\psi(x,y)$ denote tuples in the variables $x,y$. The right hand side of the implication in item $(2)$ depicts a sequence of elements of $R$, the corners of which determine a cube that belongs to $R^{\circ_0}$. Each solid curved line indicates that the corresponding vertex labels determine a $\delta$-pair, while the symbol along each top row indicates an equality that results from an application of the Taylor identity. The curved dotted lines also indicate $\delta$-pairs. Their existence is deduced left-to-right, first by the transitivity of $\delta$, then by an application of the $(\delta,2)$-centrality of $R$, and last by an application of the transitivity of $\delta$. We conclude that 

\[
\langle t(b_1, \psi(b_0, b_1)), t(d_1,\psi(b_0, b_1)) \rangle \in \delta
\]

Let $\eta(b_1, d_1) \in \{b_1, d_1\}^{\sigma(t)-1}$. The labeled cube depicted in item $(3)$ is an element of $R$. This follows because the labeled cube determined by the first argument of $t$ belongs to $R$ (because $R$ is $3$-symmetric), as do the labeled cubes determined by each of the remaining arguments of $t$ (because $M(\theta, \theta, \theta) \leq R \leq \rect(\theta, \theta, \theta)$.) The two columns belonging to the back face determine $\delta$-pairs because 
$\langle b_0, d_0 \rangle \in \delta$, and it has been shown that the left column of the front face also determines a $\delta$-pair. Because $R$ has $(\delta, 1)$-centrality, we conclude that 
\[
\langle t(b_1, \eta(b_1, d_1)), t(d_1, \eta(b_1, d_1)) \rangle \in \delta.
\]
Item (4) finishes the argument in a manner identical to the end of the proof of Lemma \ref{lem:binaryh=tc}. This finishes the perpendicular stage of the argument.

We proceed to the \textbf{parallel stage} and refer to Figure \ref{fig:ternarypar}. The left hand side of the implication in item $(2')$ illustrates the assumption that 
\[
\langle a_0, b_0 \rangle, \langle c_0, d_0 \rangle, \langle c_1, d_1 \rangle \in \delta
\]
We want to show that $\langle a_1, b_1 \rangle \in \delta$. As before, we present an argument involving the first argument of the Taylor term. The right hand side of the implication in item $(2')$ depicts a sequence of elements of $R$, the corners of which determine a cube that belongs to $R^{\circ_0}$. A solid curved line indicates a $\delta$-pair whose existence follows from the initial assumptions. The dotted curved lines also indicate $\delta$-pairs. The existence of the bottom dotted curved line follows from the transitivity of $\delta$,
while the existence of the top dotted curved line follows from our earlier completion of the perpendicular stage. We conclude that 
\[
\langle t(a_1, \psi(b_0, b_1)) , t(b_1, \psi(b_0,b_1)) \in \delta.
\]

Now, let $\eta(a_1,b_1) \in \{a_1, b_1 \}^{\sigma(t)-1}$. As before, our goal is to show that 
\[
\langle t(a_1, \eta(a_1, b_1)), t(b_1, \eta(a_1, b_1)) 
\rangle \in \delta.
\]
We need to produce an element of $R$ to which we may apply the assumption that $R$ has $(\delta, i)$-centrality for each $i\in 3$. This is possible provided we assume that 
\[
\Square[b_0][a_0][b_1][a_1] \in M(\theta, \theta),
\]
as illustrated in item $(3')$. The remainder of the argument in this case is similar to the perpendicular stage. 

In general, we may only produce the sequence of elements of $R$ shown in item $(5')$. Because this is another instance of the parallel stage, it appears as though no progress has been made. However, note that there is a symmetric version of $(3')$ in which we assume that 

\[
\Square[c_1][a_1][d_1][b_1] \in M(\theta, \theta).
\]
This new instance satisfies assumptions of this symmetric version of $(3')$, so we conclude that 

\[
\langle t(a_1, \eta(a_1, b_1)), t(b_1, \eta(a_1, b_1)) 
\rangle \in \delta.
\]
This finishes the proof of the parallel stage. 
\end{proof}

The analogue of Theorem \ref{thm:binarytc=h} immediately follows. Because it is a special case of Theorem \ref{thm:allarityH=TC}, we omit the proof.

\begin{thm}\label{thm:ternarytc=h}
Let $\var$ be a Taylor variety, $\A \in \var$, and $\theta \in \Con(\A)$. In this situation,
\[
[\theta, \theta, \theta]_H = [\theta, \theta, \theta]_{TC}.
\]
\end{thm}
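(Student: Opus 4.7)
The plan is to follow the template already established in the proof of Theorem~\ref{thm:binarytc=h}, but with Lemma~\ref{lem:ternaryh=tc} as the engine in place of Lemma~\ref{lem:binaryh=tc}. The inequality $[\theta, \theta, \theta]_{TC} \leq [\theta, \theta, \theta]_H$ is immediate from Theorem~\ref{thm:basicpropertiescommutator}(4), so only the reverse inequality requires work.

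Set $\delta \coloneqq [\theta, \theta, \theta]_{TC}$; the goal is to show that $\Delta(\theta, \theta, \theta)$ has $(\delta, i)$-centrality for every $i \in 3$, which will give $[\theta, \theta, \theta]_H \leq \delta$. By the definition of the term condition commutator, $M(\theta, \theta, \theta)$ has $(\delta, 2)$-centrality. Because all three arguments are the same congruence $\theta$, the generating set $\bigcup_{i \in 3} \cube_i(\theta)$ is invariant under the action of the symmetric group on coordinate directions, and hence so is $M(\theta, \theta, \theta)$. Centrality therefore transports across coordinates, and $M(\theta, \theta, \theta)$ has $(\delta, i)$-centrality for every $i \in 3$, not merely for the maximal coordinate.

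Next I set $R_0 \coloneqq M(\theta, \theta, \theta)$ and recursively $R_{j+1} \coloneqq R_j^{\circ_{(j \bmod 3)}}$; by Lemma~\ref{lem:basicdelta2} together with the definition of $\tc$, one has $\Delta(\theta, \theta, \theta) = \bigcup_{j \in \nat} R_j$. By Lemma~\ref{prop:hcongenerate}(1) every $R_j$ is a three-dimensional tolerance of $\A$, and an easy induction shows $M(\theta, \theta, \theta) \leq R_j \leq \rect(\theta, \theta, \theta)$, using that $\rect(\theta, \theta, \theta)$ is already a three-dimensional congruence and so is closed under each single-coordinate transitive closure operation. Applying Lemma~\ref{lem:ternaryh=tc} inductively along this chain propagates $(\delta, i)$-centrality for every $i \in 3$ from $R_j$ to $R_{j+1}$, so every $R_j$ has $(\delta, i)$-centrality in all three coordinate directions.

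Finally, $(\delta, i)$-centrality is preserved under ascending unions, since its negation is an existential statement about a single offending $\gamma$. Hence the chain union $\Delta(\theta, \theta, \theta) = \bigcup_j R_j$ retains $(\delta, i)$-centrality for every $i \in 3$, giving $[\theta, \theta, \theta]_H \leq \delta$ and completing the proof. The only genuine obstacle is already absorbed into Lemma~\ref{lem:ternaryh=tc}; the remainder is orchestration of the transitive closure induction, mirroring the binary case verbatim.
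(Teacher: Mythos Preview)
Your proof is correct and follows exactly the template the paper intends: the paper omits the proof, stating that it is the direct analogue of Theorem~\ref{thm:binarytc=h} (and a special case of Theorem~\ref{thm:allarityH=TC}), and your argument carries out precisely that analogue with Lemma~\ref{lem:ternaryh=tc} in place of Lemma~\ref{lem:binaryh=tc}. Your explicit justification that $M(\theta,\theta,\theta)$ has $(\delta,i)$-centrality for all $i\in 3$ via the coordinate symmetry is a useful clarification of the base case that the paper's proofs of Theorems~\ref{thm:binarytc=h} and~\ref{thm:allarityH=TC} leave implicit.
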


\subsection{Proof of HHC8 for the binary and ternary case}\label{subsec:ternHHC8}
Let $\A$ be any algebra and take $\theta_0, \theta_1, \theta_2 \in \Con(\A)$. We will show that
\[
[\theta_0, [\theta_1, \theta_2]_H ]_H\leq [\theta_0, \theta_1, \theta_2]_H
\]
We begin by developing a relational characterization of both the binary and ternary hypercommutators. Both Propositions \ref{prop:binhypchar} and \ref{prop:ternaryhypchar} are special cases of Theorem \ref{thm:charhypercomm}.

\begin{prop}\label{prop:binhypchar}
Let $\A$ be an algebra and take $\theta_0, \theta_1 \in \Con(\A)$. The following are equivalent.
\begin{enumerate}
\item $\langle x,y \rangle \in [\theta_0, \theta_1]_H$.
\item $\Square [x][x][x][y] \in \Delta(\theta_0, \theta_1)$.
\item $\Square[a][a][x][y] \in \Delta(\theta_0, \theta_1)$ for some $a \in A$.

\item $\Square[b][x][b][y] \in \Delta(\theta_0, \theta_1)$ for some $b \in A$.

\end{enumerate}
\end{prop}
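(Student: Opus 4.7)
The plan is to establish $(2) \Leftrightarrow (3) \Leftrightarrow (4)$ by elementary gluing inside the $2$-dimensional congruence $\Delta := \Delta(\theta_0,\theta_1)$, then close the loop via $(3) \Rightarrow (1) \Rightarrow (3)$. The easy directions $(2) \Rightarrow (3)$ and $(2) \Rightarrow (4)$ come from choosing $a = x$ and $b = x$. For $(3) \Rightarrow (2)$: given $\Square[a][a][x][y] \in \Delta$, the inclusion $\Delta \leq \rect(\theta_0,\theta_1)$ yields $(a,x) \in \theta_0$, so $\Square[x][x][a][a] = \cube_0(x,a) \in M(\theta_0,\theta_1) \leq \Delta$. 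Composing these along axis $0$ (the right column of the first matches the left column of the second, both equal to $(a,a)$) produces $\Square[x][x][x][y] \in \Delta$. The implication $(4) \Rightarrow (2)$ is the axis-$1$ analogue using $\Square[x][b][x][b] = \cube_1(x,b)$.

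The direction $(3) \Rightarrow (1)$ is then immediate: if $\Square[a][a][x][y] \in \Delta$ and $\delta \in \Con(\A)$ is any congruence for which $\Delta$ has $(\delta,1)$-centrality, then the left edge $(a,a)$ lies in $\delta$ by reflexivity, so by centrality the right edge $(x,y)$ lies in $\delta$. Hence $(x,y)$ belongs to the meet defining $[\theta_0,\theta_1]_H$.

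For $(1) \Rightarrow (3)$ I would introduce
\[
\theta^* := \{(x,y) \in A^2 : \exists a \in A,\ \Square[a][a][x][y] \in \Delta\}
\]
and show that $\theta^*$ is a congruence of $\A$ for which $\Delta$ has $(\theta^*,1)$-centrality. Once this is done, $[\theta_0,\theta_1]_H \leq \theta^*$ by the very definition of the hypercommutator, which together with the reverse containment from $(3) \Rightarrow (1)$ gives equality. Reflexivity is $\Square[x][x][x][x] = \cube_0(x,x) \in \Delta$; compatibility follows by applying term operations coordinatewise to witnesses; symmetry uses that $\Delta$ is $(S)$-symmetric, so $\sym_1$ swaps the top and bottom rows of $\Square[a][a][x][y]$ to produce $\Square[a][a][y][x] \in \Delta$ with the same witness $a$; and $(\theta^*,1)$-centrality follows by the same axis-$0$ gluing as $(3) \Rightarrow (2)$: given $\Square[p][q][r][s] \in \Delta$ with $(p,q) \in \theta^*$ witnessed by $a$, composing $\Square[a][a][p][q]$ with $\Square[p][q][r][s]$ along axis $0$ yields $\Square[a][a][r][s] \in \Delta$, witnessing $(r,s) \in \theta^*$.

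The main obstacle is transitivity of $\theta^*$. Given $(x,y) \in \theta^*$ with witness $a$ and $(y,z) \in \theta^*$ with witness $b$, the two witnesses need not agree. Reading off the $\rect(\theta_0,\theta_1)$ constraints gives $a \mathrel{\theta_0} y$ and $b \mathrel{\theta_0} y$, so $a \mathrel{\theta_0} b$, whence $\Square[a][a][b][b] = \cube_0(a,b) \in M$. Composing $\Square[a][a][b][b]$ with $\Square[b][b][y][z]$ along axis $0$ produces $\Square[a][a][y][z] \in \Delta$, effectively translating the witness from $b$ to $a$. Finally, stacking $\Square[a][a][x][y]$ with $\Square[a][a][y][z]$ via axis-$1$ transitivity (their shared row $(a,y)$ matches) yields $\Square[a][a][x][z] \in \Delta$, so $(x,z) \in \theta^*$ with witness $a$. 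The crucial payoff of working with the formulation $(3)$ rather than the tighter $(2)$ is exactly this flexibility to choose a common witness before stacking.
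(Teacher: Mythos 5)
Your proposal is correct and follows essentially the same route as the paper: the equivalences among the square conditions are obtained by the same reflexivity/symmetry/transitivity gluings in $\Delta(\theta_0,\theta_1)$, and the equivalence with $(1)$ is proved exactly as in the paper's Theorem \ref{thm:charhypercomm} by defining the set of pairs admitting such a square, verifying it is a congruence, and checking $(\cdot,1)$-centrality. The only (harmless) variation is that you define the auxiliary congruence via form $(3)$ rather than the totally supported form $(2)$, which costs you the witness-translation step in the transitivity argument but simplifies the centrality check.
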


\begin{proof}
The proof of this Proposition is the $(2)$-dimensional version of the proof provided for Proposition \ref{prop:ternaryhypchar}.
\end{proof}
\begin{prop}\label{prop:ternaryhypchar}
Let $\A$ be an algebra and take $\theta_0, \theta_1, \theta_2 \in \Con(\A)$. The following are equivalent.
\begin{enumerate}
\item $\langle x,y \rangle \in [\theta_0, \theta_1, \theta_2]_H$.
\item $\Cube [x][x][x][x][x][x][x][y] \in \Delta(\theta_0, \theta_1, \theta_2)$.
\item $\Cube[a_0][b_0][a_0][b_0][c_0][x][c_0][y] \in \Delta(\theta_0, \theta_1, \theta_2)$ for some $a_0,b_0,c_0 \in A$.
\item $\Cube[a_1][a_1][b_1][b_1][c_1][c_1][x][y] \in \Delta(\theta_0, \theta_1, \theta_2)$ for some $a_1,b_1,c_1 \in A$.

\item $\Cube[a_2][c_2][b_2][x][a_2][c_2][b_2][y] \in \Delta(\theta_0, \theta_1, \theta_2)$ for some $a_2,b_2,c_2 \in A$.

\end{enumerate}
\end{prop}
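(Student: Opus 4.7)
The plan is to prove the cycle $(2) \Rightarrow (3), (4), (5) \Rightarrow (1) \Rightarrow (2)$. The implications $(2) \Rightarrow (3), (4), (5)$ are immediate: specializing $a_i = b_i = c_i = x$ in each witness cube collapses it into the cube appearing in $(2)$.

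For $(5) \Rightarrow (1)$, the witness cube has $(2)$-supporting lines $(a_2, a_2), (c_2, c_2), (b_2, b_2)$, all reflexive and hence contained in every congruence $\delta$ of $\A$. Lemma \ref{lem:checkcentralitywithpivot} combined with the $(\delta, 2)$-centrality of $\Delta(\theta_0, \theta_1, \theta_2)$ forces $\langle x, y \rangle \in \delta$; meeting over $\delta$ places $\langle x, y \rangle$ in $[\theta_0, \theta_1, \theta_2]_H$. The implications $(3) \Rightarrow (1)$ and $(4) \Rightarrow (1)$ proceed by analogous centrality arguments using the reflexive $(0)$- or $(1)$-supporting lines of the respective witness cubes. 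A subtlety is that the hypercommutator is defined through $(\delta, 2)$-centrality rather than $(\delta, 0)$- or $(\delta, 1)$-centrality; this gap is bridged either by transforming the witness cube directly into form $(5)$ using the closure operations $\sym_i, \refl_i^j$ in conjunction with the transitivity of $\faces_i(\Delta)$ (via Corollary \ref{cor:deltaedgesarecongruences}), or by a separate lemma establishing the coordinate-independence of the centrality invariant.

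The main step is $(1) \Rightarrow (2)$. Define the relation $\rho = \{\langle x, y \rangle \in A^2 : \text{the cube appearing in (2) belongs to } \Delta(\theta_0, \theta_1, \theta_2)\}$. It suffices to show that $\rho \in \Con(\A)$ and that $\Delta$ has $(\rho, 2)$-centrality, since the meet-definition of the hypercommutator then yields $[\theta_0, \theta_1, \theta_2]_H \subseteq \rho$. Reflexivity of $\rho$ follows because constant cubes belong to $\Delta$ by Corollary \ref{lem:hyperreflexive}, compatibility with basic operations follows by componentwise application to witness cubes, and symmetry follows by combining $\sym_2$ with the transitivity of $\faces_2(\Delta)$ to exchange the pivot and antipivot labels. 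The main obstacle is the joint verification of transitivity of $\rho$ together with the $(\rho, 2)$-centrality of $\Delta$: given witness cubes for $\langle x, y \rangle$ and $\langle y, z \rangle$, one splices them along their common $y$-labeled middle face via the quasiequivalence of $\faces_2(\Delta)$ and normalizes the residual labels back to $x$ and $z$ using iterated applications of $\sym_i$, $\refl_i^j$, and further face transitivity. The delicate bookkeeping here is what constitutes the technical core of the proof, and it depends only on closure properties of $\Delta$ already established in Section \ref{sec:notation}.
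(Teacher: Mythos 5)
Your overall strategy is the paper's: the cube forms in \emph{(2)}--\emph{(5)} are interchangeable via the closure operations of $\Delta(\theta_0,\theta_1,\theta_2)$, the passage to \emph{(1)} is the observation that a cube of form \emph{(2)} or \emph{(5)} has constant $(2)$-supporting lines, and the converse is proved by showing that the relation $\rho$ defined by \emph{(2)} is a congruence for which $\Delta(\theta_0,\theta_1,\theta_2)$ has $(\rho,2)$-centrality. One organizational caution: by routing \emph{(3)},\emph{(4)},\emph{(5)} through \emph{(1)} rather than directly to \emph{(2)}, you create a latent circularity. Inside the proof of \emph{(1)}$\Rightarrow$\emph{(2)} you repeatedly need to upgrade a cube that merely $(i)$-supports a pair (the output of $\sym_2$ applied to a commutator cube in the symmetry step, or the spliced cube in the transitivity step) to the full commutator cube of form \emph{(2)}, and you cannot do this by citing \emph{(5)}$\Rightarrow$\emph{(1)}$\Rightarrow$\emph{(2)}. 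The paper avoids this by first proving \emph{(2)}$\Leftrightarrow$\emph{(3)}$\Leftrightarrow$\emph{(4)}$\Leftrightarrow$\emph{(5)} outright by cube surgery and only then addressing \emph{(1)}; your ``transform the witness cube directly into form \emph{(5)}'' bridge is exactly that surgery, so the fix is a reordering, not new mathematics.

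The genuine gap is in the step you yourself identify as the technical core. The $(\rho,2)$-centrality of $\Delta(\theta_0,\theta_1,\theta_2)$ is never actually argued: the splice you describe --- gluing the witness cubes for $\langle x,y\rangle$ and $\langle y,z\rangle$ along a constant $y$-face and normalizing --- is the proof of \emph{transitivity of $\rho$}, which is a different and easier statement. For centrality you start from an \emph{arbitrary} $\gamma\in\Delta(\theta_0,\theta_1,\theta_2)$ whose $(2)$-supporting lines happen to be $\rho$-pairs, and you must show that its $(2)$-pivot line is a $\rho$-pair. The paper's argument uses the commutator cubes that those supporting lines admit (by the definition of $\rho$) to replace the supporting lines of $\gamma$ by constant pairs one face at a time, converting $\gamma$ into a cube of form \emph{(4)}, at which point the already-established implication \emph{(4)}$\Rightarrow$\emph{(2)} applies; this is the replacement induction of Figure \ref{fig:thcchar2} (and of Claims \ref{claim:glueingsupports1}--\ref{claim:glueingsupports3} in the $n$-ary case). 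Your proposal names this step but substitutes for it the description of a different argument, so as written the centralization of $\Delta(\theta_0,\theta_1,\theta_2)$ by $\rho$ --- and hence the containment $[\theta_0,\theta_1,\theta_2]_H\subseteq\rho$ --- is unproved.
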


\begin{proof}
We first show that \emph{(2),(3),(4),(5)} are equivalent. It is clear that \emph{(2)} implies each of \emph{(3)}, \emph{(4)}, \emph{(5)}. Assume that \emph{(3)} holds and refer to Figure \ref{fig:thcchar1}. Item $(0)$ provides the orientation of coordinates. Items $(1)$ and $(2)$ illustrate that \emph{(2)} holds, where each step follows from the $(3)$-symmetry, reflexivity, and transitivity of $\Delta(\theta_0, \theta_1, \theta_2)$. The proof that \emph{(4)} or \emph{(5)} imply \emph{(2)} is similar and is omitted. 

Now we show \emph{(1}) holds if and only if \emph{(2)} holds. Set 
\[
\delta = \{ \langle x,y \rangle : \text{\emph{(2)} holds} \}.
\]
It is clear that $\delta \subseteq [\theta_0,\theta_1,\theta_2]$, establishing that \emph{(2)} implies \emph{(1)}. To establish the other direction it suffices to show that $\delta$ is a congruence, which we leave to the reader, and also that $\Delta(\theta_0, \theta_1, \theta_2)$ has $(\delta, 2)$-centrality, which we prove now.

We refer to Figure \ref{fig:thcchar2}. Item $(0)$ provides the orientation of coordinates. In item $(1)$ a typical element of $\Delta(\theta_0, \theta_1, \theta_2)$ is depicted with every $(1)$-supporting line determining a $\delta$-pair. We need to show that the $(1)$-pivot line $\langle f,h \rangle $ is also a $\delta$-pair. The result of items $(2)$-$(5)$ is that

\[
\DeltaOneCubeD[a][c][b][d][e][g][f][h]
\in \Delta(\theta_0, \theta_1, \theta_2) 
\implies
\DeltaOneCubeD[a][a][b][b][e][g][f][h]
\in \Delta(\theta_0, \theta_1, \theta_2).
\]

A similar argument may be applied to this new cube to produce the cube shown in item $(6)$. We know that \emph{(4)} implies \emph{(2)}, so $\langle f, h \rangle \in \delta$.
\end{proof}

We remark that Propositions \ref{prop:binhypchar} and \ref{prop:ternaryhypchar} imply that both the binary and ternary hypercommutator are symmetric, i.e.\ their output does not depend on the order of their arguments. The following is a less obvious consequence.

\begin{thm}[Binary-ternary HHC8]\label{thm:bthhc8}
If $\A$ is an algebra and $\theta_0, \theta_1, \theta_2 \in \Con(\A)$, then
\[
[\theta_0, [\theta_1, \theta_2]_H]_H \leq [\theta_0, \theta_1, \theta_2]_H.
\]
\end{thm}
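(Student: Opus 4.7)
By Propositions~\ref{prop:binhypchar} and~\ref{prop:ternaryhypchar}, it suffices to prove that whenever $\Square[x][x][x][y] \in \Delta(\theta_0, [\theta_1, \theta_2]_H)$, one also has $\Cube[x][x][x][x][x][x][x][y] \in \Delta(\theta_0, \theta_1, \theta_2)$. The plan is to construct a $2$-dimensional congruence $R$ of $\A$ whose elements admit canonical $3$-dimensional lifts into $\Delta(\theta_0, \theta_1, \theta_2)$, show that $\Delta(\theta_0, [\theta_1, \theta_2]_H) \leq R$, and observe that the lift of $\Square[x][x][x][y]$ is exactly $\Cube[x][x][x][x][x][x][x][y]$.

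For $\beta \in A^{2^{\{0,1\}}}$ and $j \in 2$, let $\pi_j(\beta) \in A^{2^{\{0,1\}}}$ be the $2$-dimensional cube that is constant in coord~$1$ and whose value at position $(f_0, f_1)$ equals $\beta_{(f_0, j)}$. Define
\[
R \coloneqq \bigl\{\beta \in A^{2^{\{0, 1\}}} : \glue_{\{2\}}(\langle \pi_j(\beta), \beta\rangle) \in \Delta(\theta_0, \theta_1, \theta_2) \text{ for both } j \in 2\bigr\}.
\]
Since $\pi_0(\Square[x][x][x][y])$ is the constant cube with value $x$, once $\Delta(\theta_0, [\theta_1, \theta_2]_H) \leq R$ is established, the $\pi_0$-lift of $\Square[x][x][x][y]$ is precisely $\Cube[x][x][x][x][x][x][x][y]$, which lies in $\Delta(\theta_0, \theta_1, \theta_2)$ as required.

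For containment of generators in $R$: if $\langle a, b\rangle \in \theta_0$ then $\pi_0(\cube_0(a,b)) = \pi_1(\cube_0(a,b)) = \cube_0(a,b)$, whose $3$-dimensional lift is the generator $\cube_0(a, b) \in M(\theta_0, \theta_1, \theta_2)$. For $\cube_1(a, b)$ with $\langle a, b \rangle \in [\theta_1, \theta_2]_H$, applying Proposition~\ref{prop:binhypchar} to the binary hypercommutator $[\theta_1, \theta_2]_H$ produces witnesses $\Square[a][a][a][b], \Square[b][b][b][a] \in \Delta(\theta_1, \theta_2)$; their constant-in-coord-$0$ extensions lie in $\Delta(\theta_0, \theta_1, \theta_2)$ by the $(\{0, 1, 2\})$-reflexivity of Corollary~\ref{lem:hyperreflexive}, and after applying $\sym_1$ where needed they yield the required $\pi_0$- and $\pi_1$-lifts. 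Closure of $R$ under the basic operations of $\A$ and under $\refl_i^j, \sym_i$ for $i \in \{0, 1\}$ follows routinely from the fact that $\pi_j$ commutes with the basic operations and with $\refl_0^j, \sym_0$, while $\sym_1$ interchanges $\pi_0$ and $\pi_1$. Closure under transitivity along coord~$0$ reduces immediately to the hypothesis $\faces_0^1(\beta_1) = \faces_0^0(\beta_2)$, since the $\pi_j$-witnesses of $\beta_1$ and $\beta_2$ take the matching constant values $\beta_1(1, j) = \beta_2(0, j)$ on the face being glued.

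The main obstacle is closure of $R$ under transitivity along coord~$1$. Given $\beta_1, \beta_2 \in R$ with $\faces_1^1(\beta_1) = \faces_1^0(\beta_2)$, gluing the $\pi_1$-lift of $\beta_1$ with the $\pi_0$-lift of $\beta_2$ in coord~$1$ produces a cube $\gamma \in \Delta(\theta_0, \theta_1, \theta_2)$ whose front face is the glued cube $\beta_{12}$ but whose back face is the constant-in-coord-$1$ cube determined by $\beta_1(\cdot, 1) = \beta_2(\cdot, 0)$, rather than $\pi_0(\beta_{12})$ or $\pi_1(\beta_{12})$. To produce the $\pi_0(\beta_{12})$-lift, I construct a bridge as follows. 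Observe that if $\beta \in R$ then Lemma~\ref{lem:deltaprojectstodelta} applied to $\cut_{\{1\}}$ of its $\pi_0$-lift yields that $\beta$, reinterpreted on coordinates $\{0, 2\}$, lies in $\Delta(\theta_0, \theta_2)$. Thus $\beta_1$ reinterpreted on $\{0, 2\}$ is a $\cut_{\{1\}}$-slice of some element of $\Delta(\theta_0, \theta_1, \theta_2)$, and applying $\refl_1^0$ produces a constant-in-coord-$1$ cube $B \in \Delta(\theta_0, \theta_1, \theta_2)$ whose $f_2 = 0$ face is $\pi_0(\beta_{12})$ and whose $f_2 = 1$ face equals the back face of $\gamma$. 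Gluing $B$ to $\gamma$ along coord~$2$ yields the $\pi_0$-lift of $\beta_{12}$, and the $\pi_1$-lift is obtained by a symmetric bridge using $\beta_2$.
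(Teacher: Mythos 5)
Your proof is correct and takes essentially the same route as the paper's: there the auxiliary relation is $\faces_2^1(N(\theta_0,\theta_1,\theta_2))$, where $N(\theta_0,\theta_1,\theta_2)$ consists precisely of the cubes in $\Delta(\theta_0,\theta_1,\theta_2)$ of your $\pi_0$-lift form, and the argument is the same verification that this $(2)$-dimensional relation is a congruence containing the generators of $M(\theta_0,[\theta_1,\theta_2]_H)$. Your only deviation is demanding both lifts in the definition of $R$, which replaces the paper's three-cube gluing in the coordinate-$1$ transitivity check by your two-lift gluing followed by a reflexivity bridge; this variant also works.
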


\begin{proof}
We use the same orientation of coordinates as in the other proofs. Take $\langle x,y\rangle \in [\theta_0, [\theta_1,\theta_2]_H]_H$. We will show that $\langle x, y \rangle \in [\theta_0, \theta_1,\theta_2]_H$. By Propositions \ref{prop:binhypchar} and \ref{prop:ternaryhypchar}, this amounts to showing that 
\[
\Square[x][x][x][y] \in \Delta(\theta_0,[\theta_1, \theta_2]_H) \implies
\Cube[x][x][x][x][x][x][x][y] \in \Delta(\theta_0, \theta_1, \theta_2).
\]
To this end, set 
\[
N(\theta_0, \theta_1, \theta_2) = \left\{ h: h\in \Delta(\theta_0, \theta_1,\theta_2) \text{ and } h= \Cube[a][a][c][c][a][b][c][d] \text{ for some } a,b,c,d \in A \right\}.
\]

We claim that $\Delta(\theta_0, [\theta_1, \theta_2]_H) \leq \faces_2^1(N(\theta_0, \theta_1, \theta_2)) = R $. To prove it, we will show that $R$ contains the generators of $M(\theta_0, [\theta_1, \theta_2]_H)$ and is a $(2)$-dimensional congruence. 

Indeed, suppose that $\langle x,y \rangle \in [\theta_1, \theta_2]_H$. Proposition \ref{prop:binhypchar} shows that 
\[
\mu = \Square[x][x][x][y] \in \Delta(\theta_1, \theta_2).
\]
On the other hand, Lemma \ref{lem:deltaprojectstodelta} indicates that $\mu \in \faces_0^0(\Delta(\theta_0, \theta_1, \theta_2)$. Because $\Delta(\theta_0, \theta_1, \theta_2)$ is $(3)$-reflexive, we have shown that

\[
\langle x,y\rangle  \in [\theta_1, \theta_2] \iff 
\Cube[x][x][x][x][x][y][x][y] \in \Delta(\theta_0, \theta_1, \theta_2)
\]
and therefore 
\[
\cube_1([\theta_1, \theta_2]_H)=
\left\{ \Square[x][y][x][y]: \langle x,y \rangle \in [\theta_1, \theta_2]_H \right\} \subseteq \faces_2^1(N(\theta_0, \theta_1, \theta_2)).
\]
Also, $\faces_2^1(\cube_0(\theta_0)) = \cube_0(\theta_0)$ (the relation on the left is $(3)$-dimensional, the relation on the right is $(2)$-dimensional,) so 
\[\cube_0(\theta_0) = 
\left\{ \Square[x][x][y][y]: \langle x,y \rangle \in \theta_0 \right\} \subseteq \faces_2^1(N(\theta_0, \theta_1, \theta_2)).
\]

We have shown that the generators of $M(\theta_0, [\theta_1, \theta_2])$ belong to $R$. It remains to verify that $R$ is a $(2)$-dimensional congruence. We show here that $\faces_1(R)$ is transitive (the proof of the other conditions is similar.) So, take 
\[
\lambda = \Cube[a][a][c][c][a][b][c][d] , \epsilon =
\Cube[b][b][d][d][b][e][d][f] \in \Delta(\theta_0, \theta_1, \theta_2).
\]
Now, $\Delta(\theta_0, \theta_1, \theta_2)$ is a $(3)$-dimensional congruence, so we have that

\[
 \begin{tikzpicture}[scale=.8]	
	 \tikzstyle{vertex}=[circle,minimum size=10pt,inner sep=0pt]
	 \tikzstyle{selected vertex} = [vertex, fill=red!24]
	 \tikzstyle{selected edge} = [draw,line width=5pt,-,red!50]
	 \tikzstyle{edge} = [draw,-,black]
	 \node[fill=white] (v0) at (0,0) {$a$};
	 \node[vertex] (v1) at (0,1) {$a$};
	 \node[vertex] (v2) at (1,0) {$c$};
	 \node[vertex] (v3) at (1,1) {$c$};
	 
	 \node[vertex] (v4) at (0.5,-.5) {$b$};
	 \node[vertex] (v5) at (0.5,.5) {$b$};
	 \node[vertex] (v6) at (1.5,-.5) {$d$};
	 \node[vertex] (v7) at (1.5,.5) {$d$};
	 
	 \node[vertex] (v8) at (1.7,-1.7) {$b$};
	 \node[vertex] (v9) at (1.7,-.7) {$b$};
	 \node[vertex] (v10) at (2.7,-1.7) {$d$};
	 \node[vertex] (v11) at (2.7,-.7) {$d$};
	 
	 \node[vertex] (v12) at (2.2,-2.2) {$b$};
	 \node[vertex] (v13) at (2.2,-1.2) {$e$};
	 \node[vertex] (v14) at (3.2,-2.2) {$d$};
	 \node[vertex] (v15) at (3.2,-1.2) {$f$};
	 
	 \node[vertex] (v16) at (0,-1.7) {$a$};
	 \node[vertex] (v17) at (0,-.7) {$a$};
	 \node[fill=white] (v18) at (1,-1.7) {$c$};
	 \node[vertex] (v19) at (1,-.7) {$c$};
	 
	 \node[vertex] (v20) at (2.2,-3.9) {$a$};
	 \node[fill=white] (v21) at (2.2,-2.9) {$b$};
	 \node[fill=white] (v22) at (3.2,-3.9) {$c$};
	 \node[vertex] (v23) at (3.2,-2.9) {$d$};

	 \node[vertex] (implies) at (5.5, -1.5) 
	 {$\in \Delta(\theta_0, \theta_1, \theta_2) \implies$};
	 
	 \node[vertex] (v24) at (7.7,-1.7) {$a$};
	 \node[vertex] (v25) at (7.7,-.7) {$a$};
	 \node[vertex] (v26) at (8.7,-1.7) {$c$};
	 \node[vertex] (v27) at (8.7,-.7) {$c$};
	 
	 \node[vertex] (v28) at (8.2,-2.2) {$a$};
	 \node[vertex] (v29) at (8.2,-1.2) {$e$};
	 \node[vertex] (v30) at (9.2,-2.2) {$c$};
	 \node[vertex] (v31) at (9.2,-1.2) {$f$};
	 
	  \node[vertex] (impliess) at (11, -1.5) 
	 {$\in \Delta(\theta_0, \theta_1, \theta_2). $};
	 
	 \begin{scope}[on background layer]   
     \draw[edge] (v0) -- (v1) -- (v3) -- (v2) -- (v0);
	 \draw[edge] (v4) -- (v5) -- (v7) -- (v6) -- (v4);
	 \draw[edge] (v8) -- (v9) -- (v11) -- (v10) -- (v8);
	 \draw[edge] (v12) -- (v13) -- (v15) -- (v14) -- (v12);
	 \draw[edge] (v16) -- (v17) -- (v19) -- (v18) -- (v16);
	 \draw[edge] (v20) -- (v21) -- (v23) -- (v22) -- (v20);

	 \draw[edge] (v0) -- (v4);  \draw[edge] (v1) -- (v5);
	 \draw[edge] (v2) -- (v6);  \draw[edge] (v3) -- (v7);
	 
	 \draw[edge] (v8) -- (v12);  \draw[edge] (v9) -- (v13);
	 \draw[edge] (v10) -- (v14);  \draw[edge] (v11) -- (v15);
	 
	 \draw[edge] (v16) -- (v20);  \draw[edge] (v17) -- (v21);
	 \draw[edge] (v18) -- (v22);  \draw[edge] (v19) -- (v23);
	 
	  \draw[edge] (v24) -- (v25) -- (v27) -- (v26) -- (v24);
	 \draw[edge] (v28) -- (v29) -- (v31) -- (v30) -- (v28);
	 
	 \draw[edge] (v24) -- (v28);  \draw[edge] (v25) -- (v29);
	 \draw[edge] (v26) -- (v30);  \draw[edge] (v27) -- (v31);
	 
	 \end{scope}

 \end{tikzpicture}
\]
It is easy to see that each of the three cubes on the left hand side of the above implication belong to $\Delta(\theta_0, \theta_1, \theta_2)$. An application of $(3)$-transitivity produces the desired result.

Finally, suppose that $\langle x,y \rangle \in [\theta_0, [\theta_1, \theta_2]_H]_H$. By Proposition \ref{prop:binhypchar}, we know that 
\[
\Square[x][x][x][y] \in \Delta(\theta_0, [\theta_1, \theta_2]_H),
\]
and we have demonstrated that $\Delta(\theta_0, [\theta_1, \theta_2]_H) \leq R$. It follows that
\[
\Cube[x][x][x][x][x][x][x][y] \in \Delta(\theta_0, \theta_1, \theta_2).
\]
We apply Proposition \ref{prop:ternaryhypchar} and conclude that $\langle x,y \rangle \in [\theta_0, \theta_1, \theta_2]$.

\end{proof}

\begin{cor}\label{cor:ternarynestingproperty}
If $\theta$ is a congruence of a Taylor algebra $\A$, then 
\[
[\theta, [\theta, \theta]_{TC}]_{TC} \leq [\theta, \theta, \theta]_{TC}.
\] 
\end{cor}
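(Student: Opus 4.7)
The plan is to derive this corollary as an immediate consequence of the three main results already established in this section, by chaining together the identifications H $=$ TC with the unconditional HHC8 inequality.

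First, I would apply Theorem \ref{thm:binarytc=h} to the inner binary commutator, using that $\A$ is a Taylor algebra. This replaces $[\theta,\theta]_{TC}$ with $[\theta,\theta]_H$, giving
\[
[\theta, [\theta, \theta]_{TC}]_{TC} = [\theta, [\theta, \theta]_{H}]_{TC}.
\]
Next, item \emph{(4)} of Theorem \ref{thm:basicpropertiescommutator} supplies the inequality
\[
[\theta, [\theta, \theta]_{H}]_{TC} \leq [\theta, [\theta, \theta]_{H}]_{H},
\]
which shifts the outer commutator from the term-condition version to the hypercommutator. At this point the left-hand side matches exactly the hypothesis of binary-ternary HHC8.

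I would then invoke Theorem \ref{thm:bthhc8} with $\theta_0 = \theta_1 = \theta_2 = \theta$ to obtain
\[
[\theta, [\theta, \theta]_{H}]_{H} \leq [\theta, \theta, \theta]_{H}.
\]
Finally, Theorem \ref{thm:ternarytc=h} collapses the right-hand side back to the ternary term-condition commutator, yielding
\[
[\theta, \theta, \theta]_{H} = [\theta, \theta, \theta]_{TC},
\]
and stringing these (in)equalities together delivers the desired bound.

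Since every link in the chain is either a previously stated identity, the monotonicity/comparison from Theorem \ref{thm:basicpropertiescommutator}, or the already proved HHC8, there is no real obstacle: the corollary is essentially a bookkeeping exercise. The only subtle point worth flagging is that one genuinely needs the Taylor hypothesis twice (to apply H $=$ TC at both the binary and ternary levels), whereas the HHC8 step itself holds unconditionally.
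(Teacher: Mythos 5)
Your proposal is correct and follows essentially the same chain as the paper's proof: the paper bounds $[\theta,[\theta,\theta]_{TC}]_{TC} \leq [\theta,[\theta,\theta]_H]_H$ directly via Theorem \ref{thm:basicpropertiescommutator}, then applies Theorem \ref{thm:bthhc8} and Theorem \ref{thm:ternarytc=h} exactly as you do. The only (immaterial) difference is that your first link invokes the binary equality of Theorem \ref{thm:binarytc=h}, whereas the paper gets by with the unconditional inequality $[\cdot,\cdot]_{TC}\leq[\cdot,\cdot]_H$ plus monotonicity, so the Taylor hypothesis is in fact needed only once, at the ternary level.
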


\begin{figure}[H]
\includegraphics[scale=1]{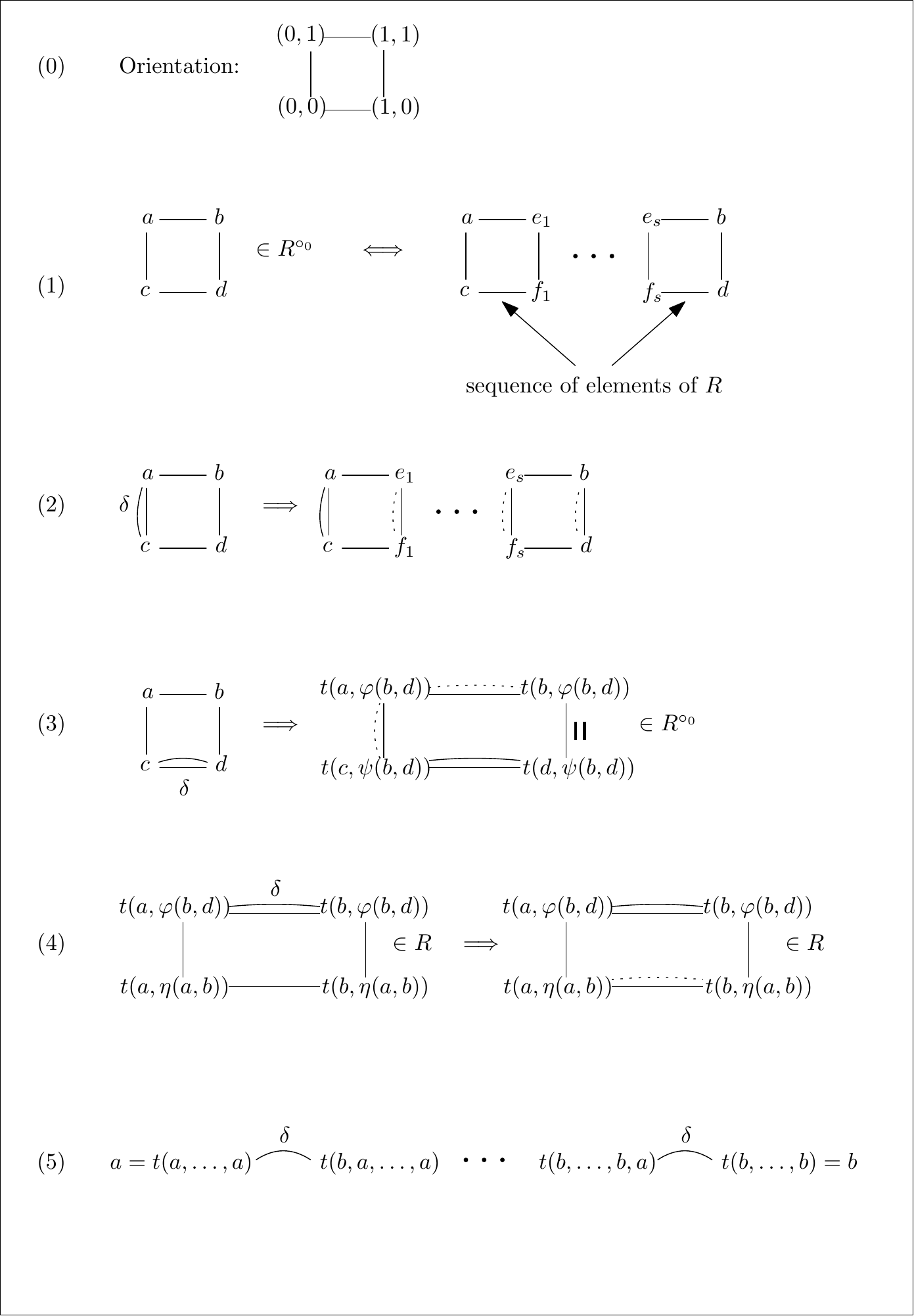}
\caption{Binary Case}\label{fig:binTC=HC}
\end{figure}

\begin{figure}[H]
\includegraphics[scale=1]{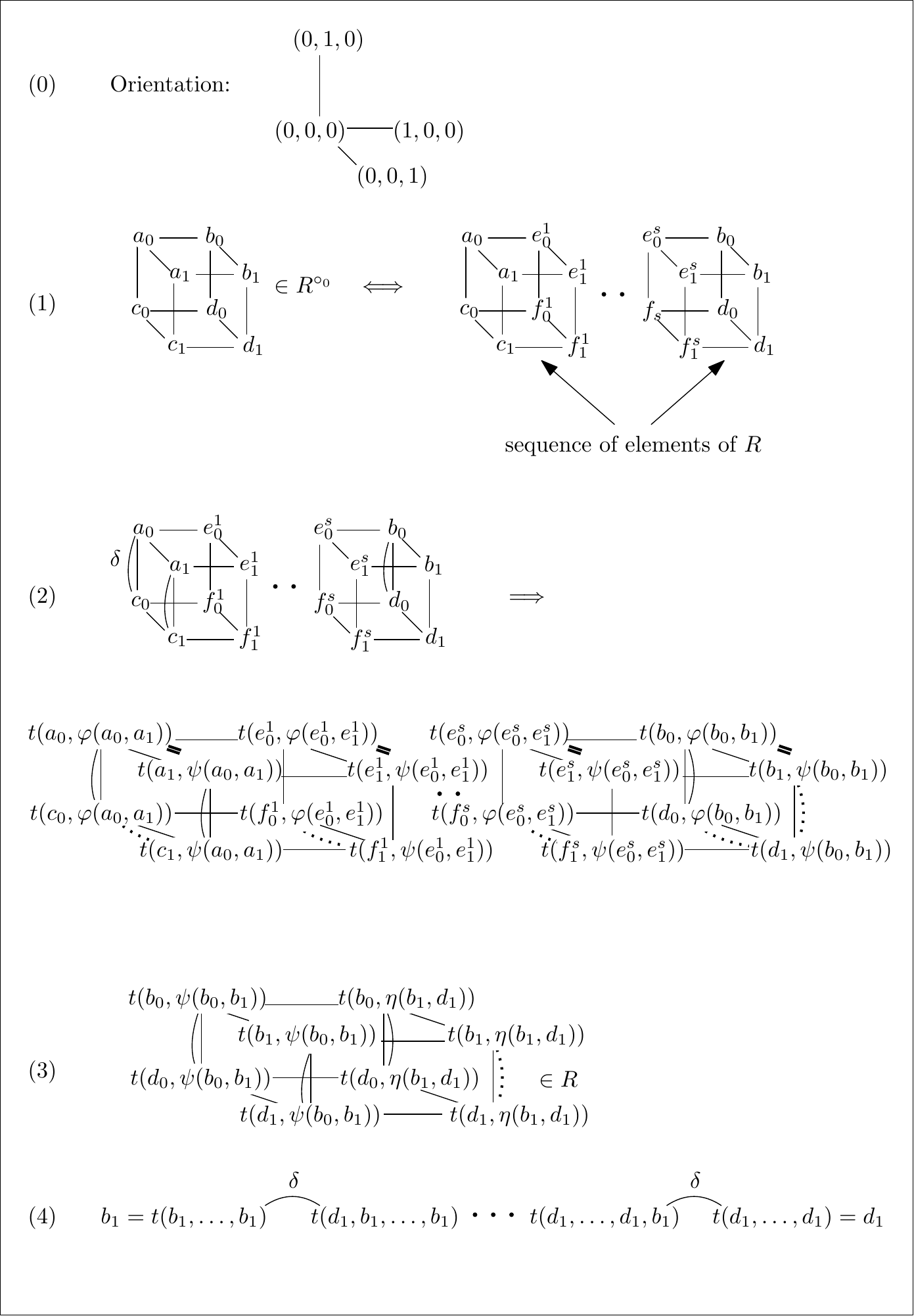}
\caption{Ternary perpendicular stage}\label{fig:ternaryperp}
\end{figure}
\begin{figure}[H]
\includegraphics[scale=1]{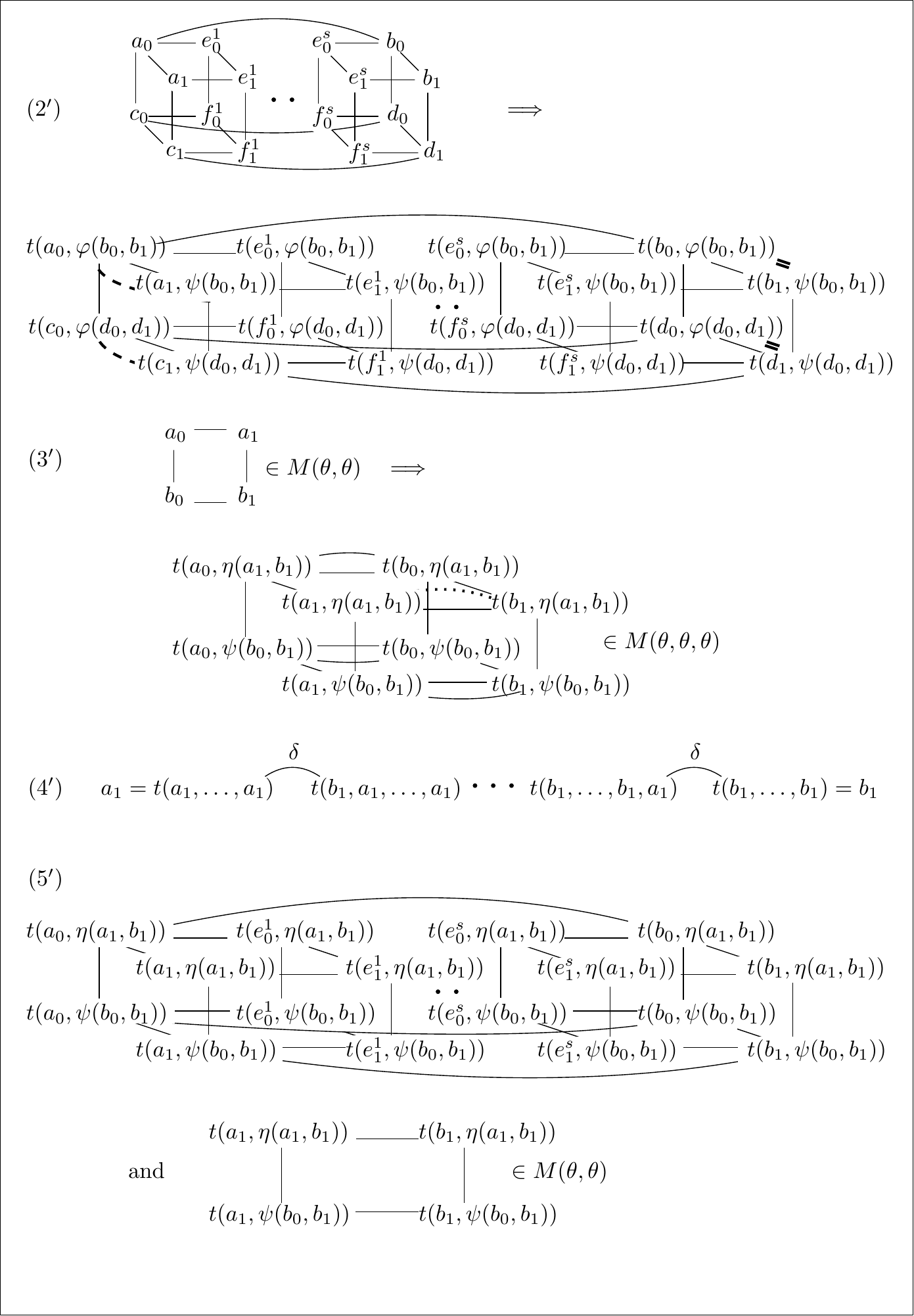}
\caption{Ternary parallel stage}\label{fig:ternarypar}
\end{figure} 

\begin{figure}[H]
\begin{center}
\includegraphics[scale=1]{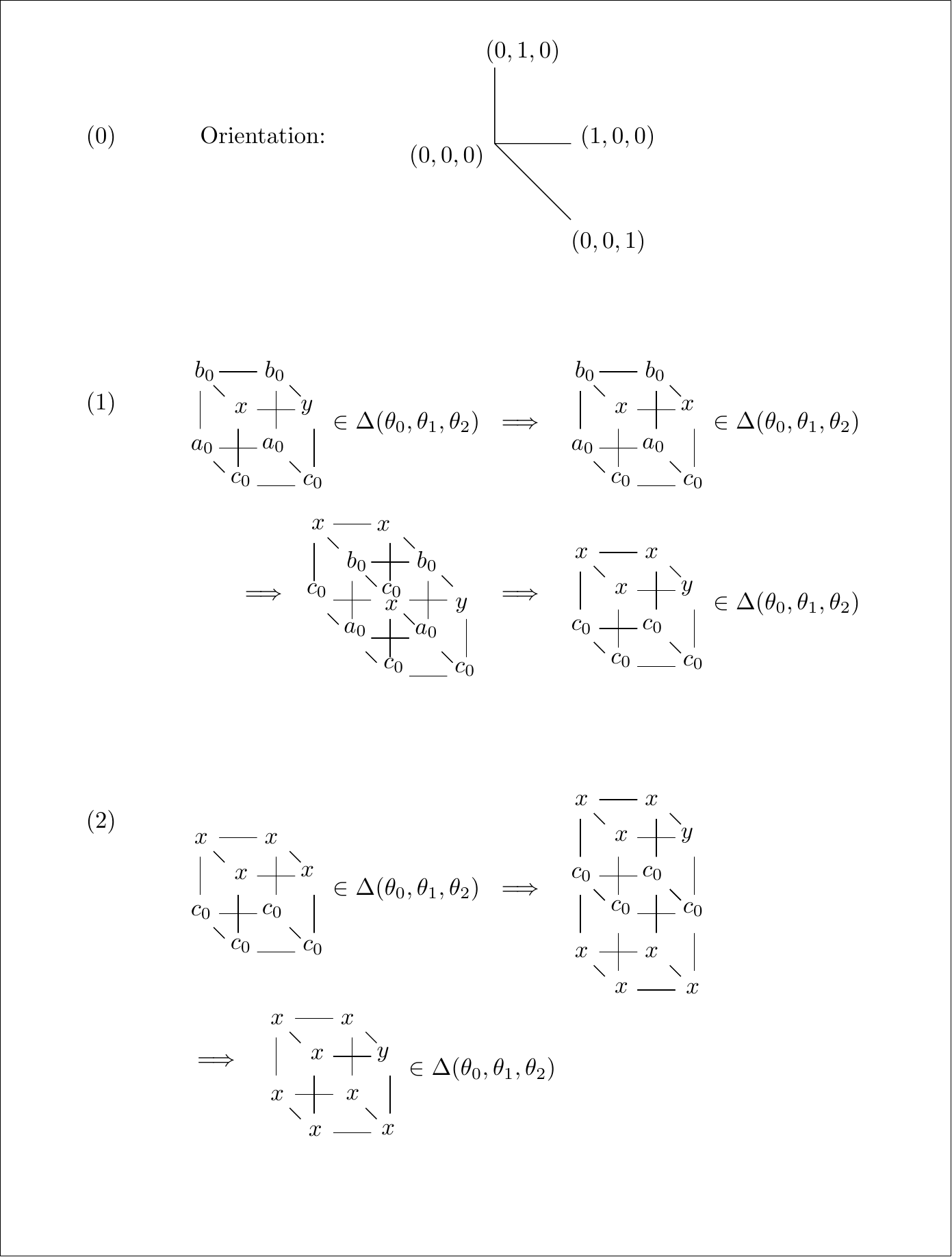}
\end{center}
\caption{Ternary hypercommutator characterization \emph{(3)} implies \emph{(2)}}\label{fig:thcchar1}
\end{figure}

\begin{figure}[H]
\begin{center}
\includegraphics[scale=1]{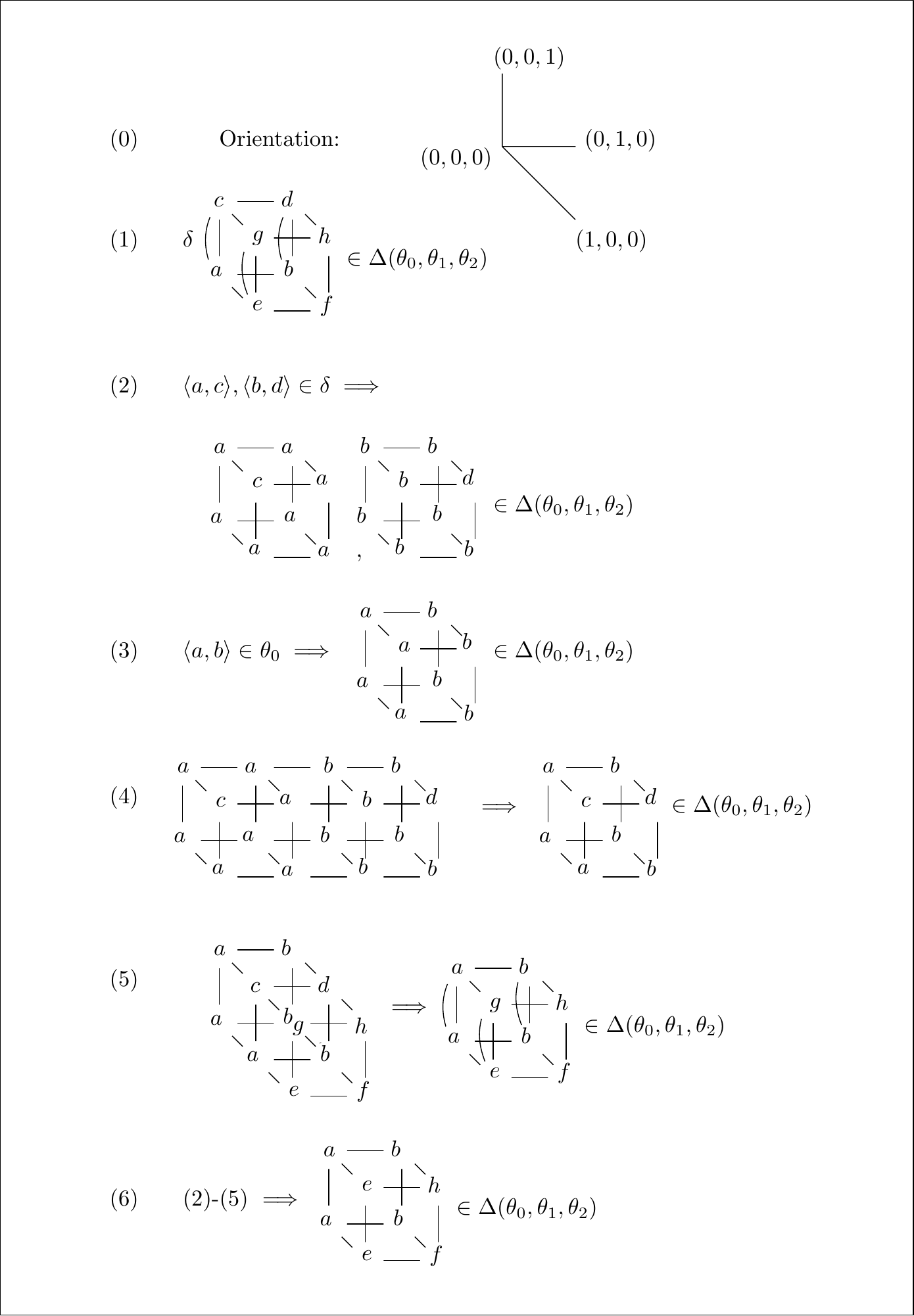}
\end{center}
\caption{Ternary hypercommutator characterization \emph{(1)} iff \emph{(2)}}\label{fig:thcchar2}
\end{figure}

\begin{proof}
The result follows from the existence of the following increasing sequence of congruences of $\A$:
\[
[\theta, [\theta, \theta]_{TC}]_{TC} 
\leq 
[\theta, [\theta, \theta]_H]_H
\leq 
[\theta, \theta, \theta]_H
= 
[\theta, \theta, \theta]_{TC}.
\]
Indeed, the first bound is a consequence of Theorem \ref{thm:basicpropertiescommutator}, the second bound is a consequence of Theorem \ref{thm:bthhc8}, and the third equality is a consequence of Theorem \ref{thm:ternarytc=h}.
\end{proof}

\section{Higher arities}\label{sec:higherarities}
This section extends the results of Section \ref{sec:lowaritycase} to any finite dimension. The basic ideas here are essentially the same as the ideas that worked for few dimensions. The term condition commutator and the hypercommutator measure two extremes of a hierarchy of centralizer conditions. To study this hierarchy for a Taylor algebra, we use the Taylor term to produce large families of cubes that connect stronger centralizer conditions to weaker ones. The argument is more complex for two reasons, the first being that cubes of dimension greater than three are not easily visualized, and the second being that the Taylor term must be composed with itself many times when the dimensional of the relations is large. 

The section is structured as follows: Subsection \ref{subsec:rotandcomp} develops machinery, and Subsections \ref{subsec:H=TC} and \ref{subsec:HHC8} extend Subsections \ref{subsec:biterH=TC} and \ref{subsec:ternHHC8} , respectively.

\subsection{Rotations and Companions}\label{subsec:rotandcomp}

Assume that $\var$ is a variety with a Taylor term, which is an idempotent term $t$ of arity $\sigma(t)$ that satisfies a package of identities of the form

$$ t\left(\begin{array}{ccccccc}
x & z_{0,1} &\cdot & \cdot & \cdot & z_{0,\sigma(t)-1} \\
z_{1,0} & x &\cdot & \cdot & \cdot & z_{1,\sigma(t)-1}  \\
\cdot  & \cdot &x & \cdot & \cdot & \cdot \\
\cdot & \cdot  &\cdot & \cdot & \cdot & \cdot \\
\cdot  & \cdot  &\cdot & \cdot & \cdot & \cdot\\
z_{\sigma(t)-1,0} & \cdot  &\cdot & \cdot & \cdot & x
\end{array} \right)  \approx t\left(\begin{array}{ccccccc}
y & w_{0,1} &\cdot & \cdot & \cdot & w_{0,\sigma(t)-1} \\
w_{1,0} & y  &\cdot & \cdot & \cdot & w_{1,\sigma(t)-1}  \\
\cdot  & \cdot &y & \cdot & \cdot & \cdot \\
\cdot & \cdot  &\cdot & \cdot & \cdot & \cdot \\
\cdot  & \cdot  &\cdot & \cdot & \cdot & \cdot\\
w_{\sigma(t)-1,0} & \cdot  &\cdot & \cdot & \cdot & y
\end{array} \right),$$
where $z_{i,j}, w_{i,j} \in \{x,y\}$ and the diagonal entries of the left and right matrices are $x$ and $y$, respectively.  

For notational convenience, we prefer to work with terms $t_0, \dots, t_{\sigma(t)-1}$, each of which is derived from the Taylor term $t$ by a permutation of variables. For each $e \in \sigma(t)$ let $t_e$ be the term such that 

$$t_e(z_e, z_0, \dots, z_{e-1}, z_{e+1}, \dots, z_{\sigma(t)-1}) = t(z_0, \dots, z_{\sigma(t)-1}). $$
Therefore, each $t_e$ satisfies an identity of the form
$$ t_e(x, \phi_e(x,y)) \approx t_e(y, \psi_e(x,y)),$$
where $\phi_e(x,y)$ and $\psi_e(x,y)$ are the tuples of length $\sigma(t)-1$ in the variables $x,y$ obtained by deleting the $e$th entry from the $e$th row of the left and right hand matrices, respectively.

Our goal is to show that the $n$-ary term condition commutator and hypercommutator are equal in a Taylor variety when evaluated at a constant tuple. To do this, we must establish a connection between the two $(n)$-dimension tolerances that are used to define each of these commutators.
Two types of $(n)$-cube, which we will call \textbf{rotations} and \textbf{companions}, are crucial to our arguments. We now define these cubes and establish their basic properties.

\begin{defn}[rotations]\label{def:rotations}
Let $\var$ be a Taylor variety with Taylor term $t$ and associated terms $t_0, \dots, t_{\sigma(t)-1}$. Let $\A \in \var$ and $n \geq 2$. For each $e \in \sigma(t)$ and $j\neq l \in n$ define  \textbf{$e$-th $(j,l)$ rotation of $\gamma \in A^{2^n}$}
as 
\[ \rrot{n}{e}{j,l}(\gamma) = t_e(\gamma, \epsilon_0, \dots, \epsilon_{\sigma(t)-2}),
\]
where for each $ s \in \sigma(t)-1$, 

\[ \epsilon_s = \begin{cases} \refl_j^0(\gamma) & \text{ if the $s$th variables of $\phi_e$ and $\psi_e$  are respectively $x$ and $y$, } \\[20pt]
\sym_l(\refl_j^0(\gamma)) & \text{ if the $s$th variables of $\phi_e$ and $\psi_e$  are respectively $y$ and $x$, }\\[20pt]
\refl_l^0(\refl_j^0(\gamma)) & \text{ if the $s$th variables of $\phi_e$ and $\psi_e$  are respectively $x$ and $x$, }
 \\[20pt]
\refl_l^1(\refl_j^0(\gamma)) & \text{ if the $s$th variables of $\phi_e$ and $\psi_e$  are respectively $y$ and $y$. }
\end{cases}
\]

\end{defn}

\begin{lem}[Basic rotation properties]\label{lem:shiftrotdef}

Let $\var$ be a Taylor variety with Taylor term $t$ and associated terms $t_0, \dots, t_{\sigma(t)-1}$. Let $\A \in \var$, $n \geq 2$, and $j \neq l \in n$. The $e$-th $(j,l)$ rotation satisfies the following properties:

\begin{enumerate}

\item 
 
Let $f \in 2^{n \setminus{ \{j,l \}}}$. If $\gamma \in A^{2^n}$ has the $(j,l)$-cross section square 
\[
 \squares_{j,l}(\gamma)_f = 
 \begin{tikzpicture}
    [ baseline=(center.base), font=\small,
      every node/.style={inner sep=0.25em}, scale=1 ]
    \node at (0.5,-0.5) (center) {\phantom{$\cdot$}}; % phantom = black magic
    \path (0,0)   node (nw) {$c_f$}
      -- ++(1,0)  node (ne) {$d_f$}
      -- ++(0,-1) node (se) {$b_f$}
      -- ++(-1,0) node (sw) {$a_f$};
    \draw (nw) -- (ne) -- (se) -- (sw) -- (nw);
  \end{tikzpicture},
 \]
then $\rrot{n}{e}{j,l}(\gamma)$ has the $(j,l)$-cross-section square

\[
 \squares_{j,l}(\rrot{n}{e}{j,l}(\gamma))_f = 
  \begin{tikzpicture}
    [ baseline=(center.base), font=\small,
      every node/.style={inner sep=0.25em}, scale=1 ]
    \node at (0.5,-0.5) (center) {\phantom{$\cdot$}}; % phantom = black magic
    \path (0,0)   node (nw) {$t_e(c_f, \psi_e(a_f,c_f))$}
      -- ++(3.3,0)  node (ne) {$t_e(d_f, \psi_e(a_f,c_f))$}
      -- ++(0,-1) node (se) {$t_e(b_f, \phi_e(a_f,c_f))$}
      -- ++(-3.3,0) node (sw) {$t_e(a_f, \phi_e(a_f,c_f))$};
    \draw (nw) -- (ne) -- (se) -- (sw) -- (nw);
    
    %\draw (sw) edge[bend left] (nw);
    %\draw (nw) edge[bend left] (ne);
    %\draw (ne) edge[bend left] (se);
    %\draw (se) edge[bend left] (sw);
    
    %\draw (sw) edge[bend left] (nw);
    %\draw (nw) edge[bend left] (ne);
    %\draw (ne) edge[bend left] (se);
    %\draw (se) edge[bend left] (sw);
    
    %\draw (se) edge node[below] {$=$} (sw);
    \draw (sw) edge node[left] {$\parallel$} (nw);
    \end{tikzpicture}.
\]

\item For  $j \neq l \in n-1$, 
\[\rrot{n}{e}{j,l}(\gamma) = 
\glue_{\{n-1\}} \left( \rrot{n-1}{e}{j,l}( \faces_{n-1}^0(\gamma)), \rrot{n-1}{e}{j,l}( \faces_{n-1}^1(\gamma)) \right).
\]

\item If $R \leq \A^{2^n}$ is an $(n)$-dimensional tolerance, then
$ \rrot{n}{e}{j,l}: R \to R. $

\item Let $\delta \in \Con(\A)$. If each $(j)$-supporting line of $\gamma \in \A^{2^n}$ is a $\delta$-pair, then each $(l)$-supporting line of $\rrot{n}{e}{j,l}(\gamma)$ is a $\delta$-pair.

\item Let $\delta \in \Con(\A)$. If each $(j)$-cross section line of $\gamma \in \A^{2^n}$ is a $\delta$-pair, then each $(l)$-cross section line of $\rrot{n}{e}{j,l}(\gamma)$ is a $\delta$-pair.

\end{enumerate}
\end{lem}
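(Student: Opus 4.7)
The plan is to establish property~(1) by direct computation and then derive (2)--(5) from (1) together with standard facts about tolerances, congruences, and the Taylor identity. For (1), I will unfold $\rrot{n}{e}{j,l}(\gamma) = t_e(\gamma, \epsilon_0, \dots, \epsilon_{\sigma(t)-2})$ corner-by-corner on the cross-section square $\squares_{j,l}(\cdot)_f$. The key observation is that the four possible types of $\epsilon_s$ in Definition~\ref{def:rotations} each produce a cross-section square whose four corner labels are built from $a_f$ and $c_f$ alone:
\[
\squares_{j,l}(\refl_j^0(\gamma))_f = \Square[a_f][c_f][a_f][c_f], \qquad
\squares_{j,l}(\sym_l(\refl_j^0(\gamma)))_f = \Square[c_f][a_f][c_f][a_f],
\]
\[
\squares_{j,l}(\refl_l^0(\refl_j^0(\gamma)))_f = \Square[a_f][a_f][a_f][a_f], \qquad
\squares_{j,l}(\refl_l^1(\refl_j^0(\gamma)))_f = \Square[c_f][c_f][c_f][c_f].
\]
A case analysis on whether $(\phi_s, \psi_s) \in \{(x,y),(y,x),(x,x),(y,y)\}$ then shows that the value of $\epsilon_s$ at the sw corner is precisely $\phi_s(a_f, c_f)$ and at the nw corner is precisely $\psi_s(a_f, c_f)$, while the se and ne entries agree with these but with first argument $b_f$ and $d_f$ in place of $a_f$ and $c_f$. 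Assembling the four corners yields the displayed square, and the $\parallel$ equality between its sw and nw labels is the instance of the Taylor identity $t_e(x, \phi_e(x,y)) = t_e(y, \psi_e(x,y))$ at $x = a_f$, $y = c_f$.

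For property (2), since $j, l \neq n-1$, each of $\refl_j^0$, $\sym_l$, $\refl_l^0$, and $\refl_l^1$ commutes with $\faces_{n-1}$, and $t_e$ acts coordinatewise on the cube; thus the rotation computed on each $(n-1)$-face of $\gamma$ separately glues back along coordinate $n-1$ to yield the full rotation of $\gamma$. For (3), Lemma~\ref{lem:reflsymclosure} shows that an $(n)$-dimensional tolerance $R$ is closed under each of the unary operations $\refl_j^0$, $\sym_l$, $\refl_l^0$, and $\refl_l^1$, so every $\epsilon_s$ belongs to $R$; compatibility of $R$ with the term $t_e$ then gives $\rrot{n}{e}{j,l}(\gamma) \in R$.

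Properties (4) and (5) reduce to a pointwise check on the cross-section squares provided by (1). In each such square, the left $(l)$-line from sw to nw is a $\delta$-pair automatically, since its endpoints are equal by the Taylor identity. For the right $(l)$-line from se to ne, assume that the two $(j)$-lines $\langle a_f, b_f\rangle$ and $\langle c_f, d_f\rangle$ of the same square are $\delta$-pairs. Then
\[
\langle t_e(b_f, \phi_e(a_f, c_f)),\, t_e(b_f, \phi_e(b_f, d_f))\rangle \in \delta
\]
by compatibility of $\delta$ with $t_e$, and $t_e(b_f, \phi_e(b_f, d_f)) = t_e(d_f, \psi_e(b_f, d_f))$ by the Taylor identity. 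A second application of compatibility gives
\[
\langle t_e(d_f, \psi_e(b_f, d_f)),\, t_e(d_f, \psi_e(a_f, c_f))\rangle \in \delta,
\]
and transitivity closes the argument. The difference between (4) and (5) is purely combinatorial: a $(j)$-line $\langle a_f, b_f\rangle$ inside the cross-section at $f$ is a $(j)$-supporting line of $\gamma$ exactly when $f \cup \{l \to 0\}$ is not $\textbf{1}$, and similarly for $\langle c_f, d_f\rangle$ with $f \cup \{l \to 1\}$. These indices match those of the $(l)$-supporting lines of $\rrot{n}{e}{j,l}(\gamma)$, which is what (4) requires; dropping the exclusion of $\textbf{1}$ gives the cross-section statement (5).

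The main obstacle is the bookkeeping in (1): there are sixteen entries (four corners times four case-types of $\epsilon_s$) to verify at once, so I would organize the computation as a single table indexed by corner and case-type rather than as four separate derivations. Once (1) is in hand, (2)--(5) follow quickly from the Taylor identity, Lemma~\ref{lem:reflsymclosure}, and compatibility of $\delta$ and $R$ with the basic operations of $\A$.
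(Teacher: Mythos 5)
Your proposal is correct and follows essentially the same route as the paper: establish \emph{(1)} by computing the $(j,l)$-cross-section squares of the $\epsilon_s$ and using that $\squares_{j,l}(\cdot)_f$ commutes with $t_e$, dispose of \emph{(2)} and \emph{(3)} by commutation with $\faces_{n-1}$ and closure of tolerances under $\refl$, $\sym$, and $t_e$, and then read \emph{(4)} and \emph{(5)} off the rotated squares. The only (harmless) divergence is in the last step: the paper puts all four vertices of the rotated square into one $\delta$-class by applying $\delta$-compatibility to the two rows and invoking the Taylor equality $\mathrm{sw}=\mathrm{nw}$, whereas you chain through a second instance of the Taylor identity at $(b_f,d_f)$; both yield that the right column is a $\delta$-pair, and your index bookkeeping for which lines are supporting versus pivot is accurate.
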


\begin{proof}
Each of these properties follows directly from Definition \ref{def:rotations}. Let us establish them in order. Take $\gamma$ and $f$ as in the assumptions of \emph{(1)} and $\epsilon_0, \dots, \epsilon_{\sigma(t) -2}$ as in Definition \ref{def:rotations}. We compute 
\begin{align*}
\squares_{j,l}(\rrot{n}{e}{j,l}(\gamma))_f 
&= \squares_{j,l}(t_e(\gamma, \epsilon_0, \dots, \epsilon_{\sigma(t)-2}))_f \\
&= t_e(\squares_{j,l}(\gamma)_f, \squares_{j,l}(\epsilon_0)_f, \dots, \squares_{j,l}(\epsilon_{\sigma(t) -2})_f) \\
&=  \begin{tikzpicture}
    [ baseline=(center.base), font=\small,
      every node/.style={inner sep=0.25em}, scale=1 ]
    \node at (0.5,-0.5) (center) {\phantom{$\cdot$}}; % phantom = black magic
    \path (0,0)   node (nw) {$t_e(c_f, \psi_e(a_f,c_f))$}
      -- ++(3.3,0)  node (ne) {$t_e(d_f, \psi_e(a_f,c_f))$}
      -- ++(0,-1) node (se) {$t_e(b_f, \phi_e(a_f,c_f))$}
      -- ++(-3.3,0) node (sw) {$t_e(a_f, \phi_e(a_f,c_f))$};
    \draw (nw) -- (ne) -- (se) -- (sw) -- (nw);    
    \draw (sw) edge node[left] {$\parallel$} (nw);
    \end{tikzpicture}.
\end{align*}
This establishes \emph{(1)}. To establish \emph{(2)}, it is enough to notice that the two $(n)$-cubes in question have the same $(j,l)$-cross section squares.

To establish \emph{(3)}, suppose $R \leq \A^{2^n}$ is an $(n)$-dimensional tolerance and $\gamma \in R$. The $(n)$-reflexivity and symmetry of $R$ imply that each of the 
$
 \epsilon_0, \dots, \epsilon_{\sigma(t)-2}
$ from Definition \ref{def:rotations}
also belong to $R$. Because $R$ is an $\A$-admissible relation, it follows that $\rrot{n}{e}{j,l}(\gamma) \in R$. 

Let $\gamma \in \A^{2^n}$ be such that every $(j)$-supporting line is a $\delta$-pair. To establish \emph{(4)} and \emph{(5)} we analyze the $(j,l)$-cross section squares of $\gamma$. Let $f \in 2^{n \setminus \{j,l\} } \setminus \{\textbf{1} \}$ and suppose that 
\[
 \squares_{j,l}(\gamma)_f = 
 \begin{tikzpicture}
    [ baseline=(center.base), font=\small,
      every node/.style={inner sep=0.25em}, scale=1 ]
    \node at (0.5,-0.5) (center) {\phantom{$\cdot$}}; % phantom = black magic
    \path (0,0)   node (nw) {$c_f$}
      -- ++(1,0)  node (ne) {$d_f$}
      -- ++(0,-1) node (se) {$b_f$}
      -- ++(-1,0) node (sw) {$a_f$};
    \draw (nw) -- (ne) -- (se) -- (sw) -- (nw);
    \draw (se) edge[bend left]  (sw);
    \draw (ne) edge[bend right] (nw);
  \end{tikzpicture},
 \]
where the each curved line indicates a $\delta$-pair.
It follows that 

\[
 \squares_{j,l}(\rrot{n}{e}{j,l}(\gamma))_f = 
  \begin{tikzpicture}
    [ baseline=(center.base), font=\small,
      every node/.style={inner sep=0.25em}, scale=1 ]
    \node at (0.5,-0.5) (center) {\phantom{$\cdot$}}; % phantom = black magic
    \path (0,0)   node (nw) {$t_e(c_f, \psi_e(a_f,c_f))$}
      -- ++(3.3,0)  node (ne) {$t_e(d_f, \psi_e(a_f,c_f))$}
      -- ++(0,-1) node (se) {$t_e(b_f, \phi_e(a_f,c_f))$}
      -- ++(-3.3,0) node (sw) {$t_e(a_f, \phi_e(a_f,c_f))$};
    \draw (nw) -- (ne) -- (se) -- (sw) -- (nw);
    
    \draw (sw) edge node[left] {$\parallel$} (nw);
    \draw (se) edge[bend left]  (sw);
    \draw (ne) edge[bend right] (nw);
    
    \end{tikzpicture}.
\]
Because $\delta$ is transitive, we conclude that all of the vertex labels of the above square belong to the same $\delta$-class. In particular, each column determines a $\delta$-pair.  The remaining $(l)$-supporting line of $\rrot{n}{e}{j,l}(\gamma)$ is the $(l)$-supporting line of $\squares_{j,l}(\rrot{n}{e}{j,l}(\gamma))_\textbf{1}$, which is constant and therefore is a $\delta$-pair. Therefore, \emph{(4)} holds. Similar reasoning shows that if the $(j)$-pivot line of $\gamma$ is also a $\delta$-pair, then so is the $(l)$-pivot line of $\rrot{n}{e}{j,l}(\gamma)$. This proves \emph{(5)}.

\end{proof}

\begin{defn}[companions]\label{def:companions}
Let $\var$ be a Taylor variety with Taylor term $t$ and associated terms $t_0, \dots, t_{\sigma(t)-1}$. Let $\A \in \var$, $n \geq 2$, $e \in \sigma(t)$, and $j, k, l \in n$ with $j \neq l$ and $j\neq k$. Let $\gamma \in A^{2^n}$ and suppose that the $(j,l)$-pivot square of $\gamma$ is 
\[
\squares_{j,l}(\gamma)_\textbf{1} = \Square[a][c][b][d].
\]
Define  \textbf{$e$-th $(j,l, k)$ companion of $\gamma$}
as 
\[ \ccom{e}{j,l,k}(\gamma) = t_e(\refl_k^1(\gamma), \epsilon_0, \dots, \epsilon_{\sigma(t)-2}),
\]
where for $ s \in e $, 

\[ \epsilon_s = \begin{cases} 
\cube_k(a,d) & \text{ if the $s$th variable of $\psi_e$ is $x$, and} \\[20pt]
\cube_k(c,d) & \text{ if the $s$th variable of $\psi_e$ is $y$, }
\end{cases}
\]
and for $ s \in \sigma(t)\setminus{e}$
\[ \epsilon_s = \begin{cases} 
\cube_k(a,c) & \text{ if the $s$th variable of $\psi_e$ is $x$, and} \\[20pt]
\cube_k(c,c) & \text{ if the $s$th variable of $\psi_e$ is $y$. }
\end{cases}
\]

\end{defn}

\begin{lem}[Basic companion properties]\label{lem:companionproperties}
Let $\var$ be a Taylor variety with Taylor term $t$ and associated terms $t_0, \dots, t_{\sigma(t)-1}$. Let $\A \in \var$, $n \geq 2$, $\gamma \in A^{2^n}$,  and $j,k,l \in n$ with $j \neq l$ and $j \neq k$.
\begin{enumerate}
\item If the $(j)$-pivot line of $\ccom{e}{j,l,k}(\gamma)$ is a $\delta$-pair for all $e \in \sigma(t)$, then the $(j)$-pivot line of $\gamma$ is a $\delta$-pair.

\item For each $e \in \sigma(t)$, the $(j)$-pivot line of $\faces_k^0(\ccom{e}{j,l,k}(\gamma))$ is equal to the $(j)$-pivot line of $\rrot{n}{e}{j,l}(\gamma)$.
\item Let $e\in \sigma(t)$ and $\delta \in \Con(\A)$. If every $j$-supporting line of $\faces_k^1(\gamma)$ is a $\delta$-pair, 
then the $(j)$-supporting lines of $\faces_k^0(\ccom{e}{j,l,k}(\gamma))$ and \newline$\faces_k^1(\ccom{e}{j,l,k}(\gamma))$ are $\delta$-pairs.
\item If $\theta \in \Con(\A)$ and  $R \leq \A^{2^n}$ is an $n$-dimensional tolerance of $\A$ such that $M(\theta, \dots, \theta) \leq R \leq \rect(\theta, \dots, \theta)$, then 
$\ccom{e}{j,l,k}: R \to R
$ for all $e \in \sigma(t)$. 
\end{enumerate}

\end{lem}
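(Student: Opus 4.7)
The plan is to unpack Definition \ref{def:companions} once, describe the behavior of $\alpha := \ccom{e}{j,l,k}(\gamma)$ on each $k$-face, and then read all four items off that description. Fix the labels $a,b,c,d$ of $\squares_{j,l}(\gamma)_\textbf{1}$ as in the definition. For any vertex $h \in 2^n$, $\refl_k^1(\gamma)_h = \gamma_{h'}$, where $h'$ is obtained from $h$ by setting its $k$-coordinate to $1$, and each value $(\epsilon_s)_h$ depends only on $h_k$. A direct case analysis against the four clauses defining the $\epsilon_s$ shows: when $h_k = 0$ the value $(\epsilon_s)_h$ equals the $s$th entry of $\psi_e(a,c)$, so $\alpha_h = t_e(\gamma_{h'}, \psi_e(a,c))$; when $h_k = 1$ the value $(\epsilon_s)_h$ equals $d$ if $s \in e$ and $c$ otherwise, so $\alpha_h = t_e(\gamma_h, d, \dots, d, c, \dots, c)$ with $e$ copies of $d$ followed by $\sigma(t)-1-e$ copies of $c$.

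Items (2) and (3) then follow from the $h_k = 0$ formula. For (2), evaluating at the two vertices of the $(j)$-pivot line of $\faces_k^0(\alpha)$, where $\gamma_{h'}$ takes the values $c$ and $d$, produces $\langle t_e(c,\psi_e(a,c)),\, t_e(d,\psi_e(a,c))\rangle$, and Lemma \ref{lem:shiftrotdef}(1) identifies this with the $(j)$-pivot line of $\rrot{n}{e}{j,l}(\gamma)$. For (3), each $(j)$-line of $\faces_k^0(\alpha)$ or $\faces_k^1(\alpha)$ at parameter $f' \in 2^{n \setminus \{j,k\}}$ is the image under $t_e(\,\cdot\,, \vec w)$, for a suitable constant tuple $\vec w$, of the $(j)$-line of $\faces_k^1(\gamma)$ at the same parameter $f'$; when $f' \neq \textbf{1}$ that source line is a supporting line of $\faces_k^1(\gamma)$, hence a $\delta$-pair by hypothesis, and compatibility of $\delta$ with $t_e$ delivers the conclusion.

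For (1), applying the $h_k = 1$ formula to the two vertices of the $(j)$-pivot line of $\alpha$, where $\gamma_h$ equals $c$ and $d$ respectively, and converting $t_e$ back into $t$ expresses the pair as $\langle t(\underbrace{d,\dots,d}_{e},\underbrace{c,\dots,c}_{\sigma(t)-e}),\, t(\underbrace{d,\dots,d}_{e+1},\underbrace{c,\dots,c}_{\sigma(t)-e-1})\rangle$. As $e$ ranges over $\{0,1,\dots,\sigma(t)-1\}$ these pairs splice into a $\delta$-chain connecting $t(c,\dots,c)$ to $t(d,\dots,d)$; idempotence of $t$ collapses the endpoints to $c$ and $d$, so transitivity of $\delta$ forces $\langle c,d\rangle \in \delta$.

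For (4), each cube $\cube_k(\cdot,\cdot)$ appearing among the $\epsilon_s$ has both arguments drawn from $\{a,c,d\}$. Because $\gamma \in R \leq \rect(\theta,\dots,\theta)$, any two labels of the $(j,l)$-pivot square are $\theta$-related (they are joined by at most two $\theta$-lines of $\gamma$ and $\theta$ is transitive), so each $\epsilon_s$ lies in $\cube_k(\theta) \subseteq M(\theta,\dots,\theta) \leq R$. Since $\refl_k^1(\gamma) \in R$ by $(n)$-reflexivity of $R$ and $R$ is compatible with the basic operations, we obtain $\alpha \in R$. The only step requiring delicate bookkeeping is the splicing of the $\delta$-chain in (1); no step presents a serious obstacle once the two face-formulas for $\alpha$ are in place.
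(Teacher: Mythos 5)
Your proposal is correct and follows essentially the same route as the paper: both proofs unpack Definition \ref{def:companions} into the two $k$-face formulas (the paper phrases this via the $(k,j)$-cross-section squares, which carry the same information as your pointwise description), read \emph{(2)} and \emph{(3)} off the $k=0$ and $k=1$ columns, prove \emph{(1)} by splicing the pairs $\langle t(d,\dots,d,c,\dots,c), t(d,\dots,d,d,c,\dots,c)\rangle$ into a $\delta$-chain closed off by idempotence, and prove \emph{(4)} by placing the $\epsilon_s$ in $M(\theta,\dots,\theta)\leq R$ and invoking reflexivity and compatibility. No gaps.
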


\begin{proof}

We will prove \emph{(1)}, \emph{(2)}, and \emph{(3)} by analyzing the $(k,j)$-cross section squares of $\ccom{e}{j,l,k}(\gamma)$. Suppose that the $(j,l)$-pivot square of $\gamma$ is
\[
\squares_{j,l}(h)_\textbf{1} = \Square[a][c][b][d].
\]
To prove \emph{(2)} and \emph{(3)} we analyze the $(k,j)$-cross section squares of $\ccom{e}{j,l,k}(\gamma)$. With a proof of \emph{(3)} in mind, assume that every $(j)$-supporting line of $\faces_i^1(\gamma)$ is a $\delta$-pair. This means that the $(k,j)$-cross-section squares of $\gamma$ are of the form

\[
 \squares_{k,j}(\gamma) = 
\underbrace{ 
 \left\{
 \begin{tikzpicture}
    [ baseline=(center.base), font=\small,
      every node/.style={inner sep=0.25em}, scale=1 ]
    \node at (0.5,-0.5) (center) {\phantom{$\cdot$}}; % phantom = black magic
    \path (0,0)   node (nw) {$w_f$}
      -- ++(1,0)  node (ne) {$z_f$}
      -- ++(0,-1) node (se) {$v_f$}
      -- ++(-1,0) node (sw) {$u_f$};
    \draw (nw) -- (ne) -- (se) -- (sw) -- (nw);
    \draw (ne) edge[bend left] node[right] {$\delta$} (se);
  \end{tikzpicture}
: f \in 2^{n \setminus{\{k,j\}}} \setminus{ \{\textbf{1}\}} \right\}  
}_{\text{supporting}}
\union
\underbrace{
\left\{
 \begin{tikzpicture}
    [ baseline=(center.base), font=\small,
      every node/.style={inner sep=0.25em}, scale=1 ]
    \node at (0.5,-0.5) (center) {\phantom{$\cdot$}}; % phantom = black magic
    \path (0,0)   node (nw) {$w_{\textbf{1}}$}
      -- ++(1.5,0)  node (ne) {$z_\textbf{1} = d$}
      -- ++(0,-1) node (se) {$v_\textbf{1} =c$}
      -- ++(-1.5,0) node (sw) {$u_{\textbf{1}}$};
    \draw (nw) -- (ne) -- (se) -- (sw) -- (nw);
  \end{tikzpicture}
 \right\}
}_{\text{pivot}} 
 ,
\]
where a curved line indicates a $\delta$-pair.
By definition, the $(k,j)$-cross-section squares of $\refl_k^1(\gamma)$ are 
\[
 \squares_{k,j}(\refl_k^1(\gamma)) = 
 \left\{
 \begin{tikzpicture}
    [ baseline=(center.base), font=\small,
      every node/.style={inner sep=0.25em}, scale=1 ]
    \node at (0.5,-0.5) (center) {\phantom{$\cdot$}}; % phantom = black magic
    \path (0,0)   node (nw) {$z_f$}
      -- ++(1,0)  node (ne) {$z_f$}
      -- ++(0,-1) node (se) {$v_f$}
      -- ++(-1,0) node (sw) {$v_f$};
    \draw (nw) -- (ne) -- (se) -- (sw) -- (nw);
    \draw (ne) edge[bend left] node[right] {$\delta$} (se);
    \draw (nw) edge[bend right]  (sw);
  \end{tikzpicture}
: f \in 2^{n \setminus{\{k,j\}}} \setminus{ \{\textbf{1}\}} \right\}  
\union
\left\{
 \begin{tikzpicture}
    [ baseline=(center.base), font=\small,
      every node/.style={inner sep=0.25em}, scale=1 ]
    \node at (0.5,-0.5) (center) {\phantom{$\cdot$}}; % phantom = black magic
    \path (0,0)   node (nw) {$d$}
      -- ++(1.5,0)  node (ne) {$d$}
      -- ++(0,-1) node (se) {$c$}
      -- ++(-1.5,0) node (sw) {$c$};
    \draw (nw) -- (ne) -- (se) -- (sw) -- (nw);

  \end{tikzpicture}
 \right\}.
\]
Set $\theta_e(c,d) = (\underbrace{d, \dots, d}_{\text{length}=e}, c, \dots, c ) \in A^{\sigma(t)-1}$.
It follows from Definition \ref{def:tccomm}  that the $(k,j)$-cross-section squares of $\ccom{e}{j,l,k}(\gamma)$ are

\begin{align*}
 \squares_{k,j}(\ccom{e}{j,l,k}(\gamma))
 &= 
\underbrace{
\left\{
  \begin{tikzpicture}
    [ baseline=(center.base), font=\small,
      every node/.style={inner sep=0.25em}, scale=1 ]
    \node at (0.5,-0.5) (center) {\phantom{$\cdot$}}; % phantom = black magic
    \path (0,0)   node (nw) {$t_e(z_f, \psi_e(a,c))$}
      -- ++(3.1,0)  node (ne) {$t_e(z_f, \theta_e(c,d))$}
      -- ++(0,-1) node (se) {$t_e(v_f, \theta_e(c,d))$}
      -- ++(-3.1,0) node (sw) {$t_e(v_f, \psi_e(a,c))$};
    \draw (nw) -- (ne) -- (se) -- (sw) -- (nw);
      \draw (ne) edge[bend left] node[right] {$\delta$} (se);
    \draw (nw) edge[bend right] (sw);
    \end{tikzpicture}
 : f \in 2^{n \setminus{\{k,j\}}} \setminus{\{\textbf{1}\}} 
 \right\}
 }_{\text{supporting}} \\
&\union
\underbrace{
\left\{
   \begin{tikzpicture}
    [ baseline=(center.base), font=\small,
      every node/.style={inner sep=0.25em}, scale=1 ]
    \node at (0.5,-0.5) (center) {\phantom{$\cdot$}}; % phantom = black magic
    \path (0,0)   node (nw) {$t_e(d, \psi_e(a,c))$}
      -- ++(3.1,0)  node (ne) {$t_e(d, \theta_e(c,d))$}
      -- ++(0,-1) node (se) {$t_e(c, \theta_e(c,d))$}
      -- ++(-3.1,0) node (sw) {$t_e(c, \psi_e(a,c))$};
    \draw (nw) -- (ne) -- (se) -- (sw) -- (nw);
    %\draw (nw) edge[bend right]  (sw);
    \end{tikzpicture}
 \right\}
 }_{\text{pivot}} .
\end{align*}
The set of left columns appearing above is precisely the set of $(j)$-cross section lines of $\faces_k^0(\ccom{e}{j,l,k}(\gamma))$. Similarly, the set of right columns is precisely the set of $(j)$-cross section lines of $\faces_k^1(\ccom{e}{j,l,k}(\gamma)$. Moreover, this identification respects the property of a line being supporting or pivot. This proves \emph{(2)} and \emph{(3)}. 

Now we establish \emph{(1)}. 
We want to show that $\langle c, d \rangle  \in \delta$, assuming that the $(j)$-pivot line of $\ccom{e}{j,l,k}(\gamma)$ is a $\delta$-pair for every $e \in \sigma(t)$. Evidently, the $(j)$-pivot line of $\ccom{e}{j,l,k}(\gamma)$ is 
\begin{align*}
\lines_j(\ccom{e}{j,l,k}(\gamma))_\textbf{1}& = 
\langle t_e(c,\underbrace{d, \cdots, d}_{\text{length } e}, c, \cdots, c), t_e(d, \underbrace{d, \cdots, d}_{\text{length } e}, c, \cdots, c) \rangle \\
&= \langle t( \underbrace{d , \cdots, d}_{\text{length }e}, c, c, \cdots, c) , t( \underbrace{d , \cdots, d}_{\text{length }e}, d, c, \cdots, c)
\rangle,
\end{align*}
where the second equality follows from the fact that $t_e$ is obtained from $t$ by switching the $0$th and $e$th coordinates. Each such pair belongs to $\delta$, so the elements of the chain 
\[
c = t(c,c, \cdots, c) \equiv_\delta t(d, c, \dots, c) \equiv_\delta \cdots \equiv_\delta t(d, \cdots, d,d) = d
\]
all belong to the same $\delta$-class,
where the outermost equalities follow from the idempotence of $t$. 

Now we prove \emph{(4)}. Suppose that the assumptions hold and take $e \in \sigma(t)$. We want to show that $\ccom{e}{j,l,k}(\gamma) \in R$ for $\gamma \in R$. Take $\epsilon_0, \dots, \epsilon_{\sigma(t)-2}$ as in Definition \ref{def:companions}. The assumption that $R \leq \rect (\theta, \dots, \theta)$ implies that each of these $(n)$-cubes belongs to $M(\theta, \dots, \theta)$, which is assumed to be a subset of $R$. Also, the assumption that $R$ is an $(n)$-dimensional tolerance implies that $\refl_k^1(\gamma) \in R$. Therefore,  $
\ccom{e}{j,l,k}(\gamma) = t_e(\refl_k^1(\gamma), \epsilon_0, \dots, \epsilon_{\sigma(t) -2} ) \in R.
$

\end{proof}

\subsection{Proof of H=TC}\label{subsec:H=TC}

\begin{lem}\label{lem:shiftrotpivotchain}

Let $\var$ be a variety with a Taylor term $t$ and associated terms $t_0, \dots, t_{\sigma(t) -1}$. Let $\A \in \var$, let $\theta, \delta \in \Con(\A)$, and suppose $i,j,l \in n$ are distinct. Let $n\geq 2$ and suppose $R$ is an $n$-dimensional tolerance of $\A$ such that
$ R$ has $(\delta, j)$-centrality and
$M(\theta, \dots, \theta) \leq R \leq \rect(\theta, \dots, \theta)$.
  Let $\mu \in R$ have the property that every $(j)$-supporting line of 
$
\faces_i^1(\mu)
$
is a $\delta$-pair. If for all $e \in \sigma(t)$ the $(l)$-pivot line of $\rrot{n}{e}{j,l}(\mu)$ is a $\delta$-pair, then the $(j)$-pivot line of $\mu$ is a $\delta$-pair. . 
\end{lem}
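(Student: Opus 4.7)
The plan is to apply $(\delta,j)$-centrality of $R$ to the companion cube $\ccom{e}{j,l,i}(\mu)$, which by Lemma \ref{lem:companionproperties}(4) belongs to $R$ for every $e\in\sigma(t)$. Once every $(j)$-supporting line of $\ccom{e}{j,l,i}(\mu)$ is shown to be a $\delta$-pair, centrality will force its $(j)$-pivot line into $\delta$; quantifying over $e$ and invoking Lemma \ref{lem:companionproperties}(1) then produces the target conclusion that the $(j)$-pivot line of $\mu$ is a $\delta$-pair.

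The $(j)$-supporting lines of $\ccom{e}{j,l,i}(\mu)$ split according to the $i$-coordinate. The $(j)$-supporting lines of $\faces_i^0(\ccom{e}{j,l,i}(\mu))$ and of $\faces_i^1(\ccom{e}{j,l,i}(\mu))$ are delivered as $\delta$-pairs directly by Lemma \ref{lem:companionproperties}(3), applied to the hypothesis that every $(j)$-supporting line of $\faces_i^1(\mu)$ is a $\delta$-pair. The only $(j)$-supporting line of $\ccom{e}{j,l,i}(\mu)$ not yet accounted for is the $(j)$-pivot of $\faces_i^0(\ccom{e}{j,l,i}(\mu))$, which by Lemma \ref{lem:companionproperties}(2) coincides with the $(j)$-pivot line of $\rrot{n}{e}{j,l}(\mu)$. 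Establishing this remaining line is a $\delta$-pair is the heart of the argument.

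For that, write $\squares_{j,l}(\mu)_\textbf{1} = \Square[a][c][b][d]$ and inspect the $(j,l)$-pivot square of the rotation, which by Lemma \ref{lem:shiftrotdef}(1) is
\[
\Square[t_e(a,\phi_e(a,c))][t_e(c,\psi_e(a,c))][t_e(b,\phi_e(a,c))][t_e(d,\psi_e(a,c))],
\]
whose left column collapses to equality by the Taylor identity. The right column is the $(l)$-pivot of the rotation and is a $\delta$-pair by hypothesis. The bottom row equals $\langle t_e(a,\phi_e(a,c)), t_e(b,\phi_e(a,c))\rangle$; since $\langle a,b\rangle$ is the $(j)$-line of $\faces_i^1(\mu)$ at the index whose $l$-coordinate is $0$ and whose remaining coordinates are $1$, it is a $(j)$-supporting line of $\faces_i^1(\mu)$ and hence a $\delta$-pair by hypothesis. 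Because $\delta$ is a congruence, applying $t_e$ with the fixed second argument $\phi_e(a,c)$ preserves the relation and the bottom row is a $\delta$-pair. Transitivity around the square --- equal left column, $\delta$-bottom, $\delta$-right --- then forces the top row, namely the $(j)$-pivot of the rotation, into $\delta$.

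The principal obstacle is this pivot-square analysis: identifying the bottom row of the rotated pivot square as $t_e$ applied coordinate-wise to the $(j)$-supporting pair $\langle a,b\rangle$ of $\faces_i^1(\mu)$ is the bridge that lets the hypothesis on $\faces_i^1(\mu)$ interact with the $(l)$-pivot hypothesis on the rotation; once this is in place, everything else is a direct assembly of Lemmas \ref{lem:shiftrotdef}, \ref{lem:companionproperties}, and \ref{lem:checkcentralitywithpivot}.
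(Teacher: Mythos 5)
Your proof is correct and follows essentially the same route as the paper's: reduce via Lemma \ref{lem:companionproperties}(1),(4) and $(\delta,j)$-centrality to checking the $(j)$-supporting lines of the companion, dispatch all but one via parts (2) and (3), and handle the remaining line by the transitivity argument around the rotated $(j,l)$-pivot square from Lemma \ref{lem:shiftrotdef}(1). Your identification of the bottom row $\langle a,b\rangle$ as the $(j)$-supporting line of $\faces_i^1(\mu)$ with $l$-coordinate $0$ is exactly the step the paper compresses into ``follows from the assumptions.''
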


\begin{proof}

By \emph{(1)} of Lemma \ref{lem:companionproperties}, it suffices to show that the $(j)$-pivot line of $\ccom{e}{j,l,i}(\mu)$ is a $\delta$-pair for all $e \in \sigma(t)$. By \emph{(4)} of Lemma \ref{lem:companionproperties} and the assumption that $R$ has $(\delta, j)$-centrality, we will be finished if we can show that every $(j)$-supporting line of 
$\ccom{e}{j,l,i}(\mu)$ is a $\delta$-pair, for all $e \in \sigma(t)$. In view of \emph{(2)} and \emph{(3)} of Lemma \ref{lem:companionproperties}, we need only show that the $(j)$-pivot line of $\rrot{n}{e}{j,l}(\mu)$ is a $\delta$-pair, for all $e \in \sigma(t)$.

Suppose that the $(j,l)$-pivot square of $\mu$ is 
\[
\begin{tikzpicture}
    [ baseline=(center.base), font=\small,
      every node/.style={inner sep=0.25em}, scale=1 ]
    \node at (0.5,-0.5) (center) {\phantom{$\cdot$}}; % phantom = black magic
    \path (0,0)   node (nw) {$c$}
      -- ++(1,0)  node (ne) {$d$}
      -- ++(0,-1) node (se) {$b$}
      -- ++(-1,0) node (sw) {$a$};
    \draw (nw) -- (ne) -- (se) -- (sw) -- (nw);
    \draw (sw) edge[bend right] node[below] {$\delta$} (se);
  \end{tikzpicture},
\]
where the curved line indicating a $\delta$-pair follows from the assumptions. By \emph{(1)} of Lemma \ref{lem:shiftrotdef}, the $(j,l)$-pivot square of 
$\rrot{n}{e}{j,l}(\mu) $ is 
\[
  \begin{tikzpicture}
    [ baseline=(center.base), font=\small,
      every node/.style={inner sep=0.25em}, scale=1 ]
    \node at (0.5,-0.5) (center) {\phantom{$\cdot$}}; % phantom = black magic
    \path (0,0)   node (nw) {$t_e(c ,\psi_e(a,c))$}
      -- ++(3.3,0)  node (ne) {$t_e(d, \psi_e(a,c))$}
      -- ++(0,-1) node (se) {$t_e(b, \phi_e(a,c))$}
      -- ++(-3.3,0) node (sw) {$t_e(a, \phi_e(a,c))$};
    \draw (nw) -- (ne) -- (se) -- (sw) -- (nw);
    %\draw (sw) edge[bend left] (nw);
    %\draw (nw) edge[bend left] (ne);
    %\draw (ne) edge[bend left] (se);
    \draw (se) edge[bend left] node[below] {$\delta$} (sw);
   \draw (sw) edge node[left] {$\parallel$} (nw);
    \end{tikzpicture} ,
\]
for each $e \in \sigma(t)$.
The right column and top row of the above square are respectively the $(l)$-pivot line and $(j)$-pivot line of $\rrot{n}{e}{j,l}(\mu)$. We assume that the $(l)$-pivot line is a $\delta$-pair, so it follows that  the $(j)$-pivot line of $\rrot{n}{e}{j,l}(\mu)$ is a $\delta$-pair. 

\end{proof}

We need to consider certain compositions of rotations and will use finite trees for bookkeeping. Assume that $t$ is a Taylor term of arity $\sigma(t)$ for some variety $\var$ and let $n \geq 2$. Set

\[
\mathbb{D}_n = \langle  \sigma(t)^{ < n}; \leq \rangle,
\]
where $ \sigma(t)^{ < n} = \Union \{ \sigma(t)^i : i \in n \}$  and two sequences $d_1$, $d_2$ are $\leq$-related when $d_1 \subseteq d_2$. Note that $\mathbb{D}_n$ has the empty sequence $\emptyset$ as its root. For $\A \in \var$ and $\gamma \in A^{2^n}$, set $\gamma^\emptyset = \gamma$. We recursively define $\gamma^d = \rrot{n}{d_i}{i,i+1}(\gamma^c)$, where $d =(d_0, \dots , d_i) \in \mathbb{D}_{n}$ is non-empty and $c$ is the predecessor of $d$.

\begin{lem}\label{lem:treesuccessorrotations}
Let $\var$ be a variety with a Taylor term $t$ and associated terms \newline $t_0, \dots, t_{\sigma(t)-1}$. Let $\A \in \var$ and $R \leq \A^{2^n}$ be an $n$-dimensional tolerance for some $n\geq 2$. If $d \in \mathbb{D}_{n}$ is a tuple of length $i \in n$, then
\begin{enumerate}

\medskip
\item 
$\gamma^d \in R$, and 
\medskip

\item
if $f \in 2^{n \setminus \{i\}}$ satisfies $f(j) = 0$ for some $j\in i$, then the $(i)$-cross-section line of $\gamma^d$ at $f$
is a constant pair.

\end{enumerate}

\end{lem}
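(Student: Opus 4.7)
The plan is to prove both statements simultaneously by induction on the length $i$ of $d$. The base case $i = 0$ forces $d = \emptyset$, so $\gamma^d = \gamma \in R$ by hypothesis, and part (2) is vacuous because no $j \in 0$ exists.

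For the inductive step, I would take $d$ of length $i+1$ with predecessor $c$ of length $i$, so that $\gamma^d = \rrot{n}{d_i}{i,i+1}(\gamma^c)$ by construction. Part (1) is then immediate: the inductive hypothesis yields $\gamma^c \in R$, and Lemma \ref{lem:shiftrotdef}(3) transports this to $\gamma^d \in R$. For part (2), fix $f \in 2^{n \setminus \{i+1\}}$ with $f(j) = 0$ for some $j \in i+1$, and set $f' = f\vert_{n \setminus \{i, i+1\}}$. Writing the $(i,i+1)$-cross-section square of $\gamma^c$ at $f'$ as $\squares_{i,i+1}(\gamma^c)_{f'} = \Square[a_f][c_f][b_f][d_f]$, Lemma \ref{lem:shiftrotdef}(1) describes the corresponding square of $\gamma^d$, and in particular its left column is a constant pair by the Taylor identity $t_{d_i}(x, \phi_{d_i}(x,y)) \approx t_{d_i}(y, \psi_{d_i}(x,y))$.

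I would then split on the value of $f(i)$. If $f(i) = 0$, the $(i+1)$-cross-section line of $\gamma^d$ at $f$ is precisely the left column just identified, and we are done. If instead $f(i) = 1$, then the hypothesis on $f$ forces some $j \in i$ with $f(j) = f'(j) = 0$. Applying the inductive hypothesis to $\gamma^c$ at the coordinates $g = f' \cup \langle i+1, 0\rangle$ and $g = f' \cup \langle i+1, 1\rangle$ (both lying in $2^{n \setminus \{i\}}$ with $g(j) = 0$), the $(i)$-cross-section lines of $\gamma^c$ at those $g$'s, which are the rows $(a_f, b_f)$ and $(c_f, d_f)$ of our square, are constant; so $a_f = b_f$ and $c_f = d_f$. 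Substituting into the formula from Lemma \ref{lem:shiftrotdef}(1), the right column of $\squares_{i,i+1}(\gamma^d)_{f'}$ becomes $(t_{d_i}(a_f, \phi_{d_i}(a_f,c_f)), t_{d_i}(c_f, \psi_{d_i}(a_f,c_f)))$, again constant by the Taylor identity, and this right column is exactly the $(i+1)$-cross-section line of $\gamma^d$ at $f$. The main obstacle is purely bookkeeping: correctly aligning rows and columns of the $(i,i+1)$-cross-section square with cross-section lines in each direction, and verifying that $f'$ inherits the coordinate forced to $0$ that is needed to invoke the inductive hypothesis on $\gamma^c$.
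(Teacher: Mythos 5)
Your proof is correct and follows essentially the same route as the paper: induction on the length of $d$, with part \emph{(1)} handled by Lemma \ref{lem:shiftrotdef}\emph{(3)} and part \emph{(2)} by reading off the columns of the $(i,i+1)$-cross-section square via Lemma \ref{lem:shiftrotdef}\emph{(1)}, using the Taylor identity for the left column and the inductive hypothesis (which forces both rows, hence both columns after rotation, to be constant) for the right. The only cosmetic difference is that you split cases on the value of $f(i)$ while the paper splits on whether the witness $j$ equals $i$; these case analyses coincide.
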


\begin{proof}
We proceed by induction. The result is trivially true for $\gamma^{\emptyset} = \gamma$. Suppose that it holds for a tuple of length $c \in \mathbb{D}_{n}$ of length $i \in n-1$ and let $d=(d_0, \dots, d_i)$ be a successor of $c$. Set 
$
\gamma^d = \rrot{n}{d_i}{i,i+1}(\gamma^c). 
$
Notice that \emph{(3)} of Lemma \ref{lem:shiftrotdef} guarantees that $\gamma^d \in R$. 

Now let $f \in 2^{n\setminus{i+1}}$ be such that $f(j) = 0$ for some $j \in i+1$, and let $f^*$ be the restriction of $f$ to the set $n\setminus{\{i, i+1\}}$.  There are two cases to consider.

\begin{enumerate}
\item[Case 1:] Suppose $j \notin i$, in which case $f(i)=0$. If 
\[
\squares_{i,i+1}(\gamma^c)_{f^*} = \Square[u][w][v][z],
\]  
then it follows from \emph{(1)} of Lemma \ref{lem:shiftrotdef} that 
\[
\squares_{i,i+1}(\gamma^d)_{f^*} = 
 \begin{tikzpicture}
    [ baseline=(center.base), font=\small,
      every node/.style={inner sep=0.25em}, scale=1 ]
    \node at (0.5,-0.5) (center) {\phantom{$\cdot$}}; % phantom = black magic
    \path (0,0)   node (nw) {$t_{d(i)}(w, \psi_{d(i)}(u,w))$}
      -- ++(3.3,0)  node (ne) {$t_{d(i)}(z, \psi_{d(i)}(u,w))$}
      -- ++(0,-1) node (se) {$t_{d(i)}(v, \phi_{d(i)}(u,w))$}
      -- ++(-3.3,0) node (sw) {$t_{d(i)}(u, \phi_{d(i)}(u,w))$};
    \draw (nw) -- (ne) -- (se) -- (sw) -- (nw);
    
    %\draw (sw) edge[bend left] (nw);
    %\draw (nw) edge[bend left] (ne);
    %\draw (ne) edge[bend left] (se);
    %\draw (se) edge[bend left] (sw);
    
    %\draw (sw) edge[bend left] (nw);
    %\draw (nw) edge[bend left] (ne);
    %\draw (ne) edge[bend left] (se);
    %\draw (se) edge[bend left] (sw);
    
    %\draw (se) edge node[below] {$=$} (sw);
    \draw (sw) edge node[left] {$\parallel$} (nw);
    \end{tikzpicture}.
\]
The left column of the above square is equal to the $(i+1)$-cross-section line of $\gamma^d$ at $f$ and it is a constant pair, as claimed.  
\item[Case 2:] Suppose $ j \in i$. In this case we apply the inductive assumption that $(2)$ holds for $\gamma^c$ and conclude that 
\[
\squares_{i, i+1}(\gamma^c)_{f^*} = \Square[u][w][u][w].
\]
Again, we apply \emph{(1)} of Lemma \ref{lem:shiftrotdef} and conclude that
\[
\squares_{i,i+1}(\gamma^d)_{f^*} = 
\begin{tikzpicture}
    [ baseline=(center.base), font=\small,
      every node/.style={inner sep=0.25em}, scale=1 ]
    \node at (0.5,-0.5) (center) {\phantom{$\cdot$}}; % phantom = black magic
    \path (0,0)   node (nw) {$t_{d(i)}(w, \psi_{d(i)}(u,w))$}
      -- ++(3.3,0)  node (ne) {$t_{d(i)}(w, \psi_{d(i)}(u,w))$}
      -- ++(0,-1) node (se) {$t_{d(i)}(u, \phi_{d(i)}(u,w))$}
      -- ++(-3.3,0) node (sw) {$t_{d(i)}(u, \phi_{d(i)}(u,w))$};
    \draw (nw) -- (ne) -- (se) -- (sw) -- (nw);
    \draw (sw) edge node[left] {$\parallel$} (nw);
     \draw (se) edge node[right] {$\parallel$} (ne);
    \end{tikzpicture}.
\]
The $(i+1)$-cross-section line of $\gamma^d$ at $f$ is either the left column or right column of the above square, and each of these columns is a constant pair. This finishes the proof. 
\end{enumerate}  
\end{proof}

\begin{prop}[Perpendicular Stage]\label{prop:perpstage}
Let $\var$ be a variety with a Taylor term $t$ and associated terms $t_0, \dots, t_{\sigma(t) -1}$. Let $\A \in \var$, let $\theta, \delta \in \Con(\A)$ and choose $n\geq 2$. Suppose $R$ is an $n$-dimensional tolerance of $\A$ such that
$M(\theta, \dots, \theta) \leq R \leq \rect(\theta, \dots, \theta)$ 
and 
$R$ has $(\delta, k)$-centrality for all $k \in n$. Let $j \in n$. Then, $R^{\circ_j}$ has $(\delta, i)$-centrality for each $ i \in n$ with $i \neq j$.
\end{prop}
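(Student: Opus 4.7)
The plan begins by reducing to a symmetric check. By Lemma~\ref{lem:checkcentralitywithpivot}, applied to the $(n)$-dimensional tolerance $R^{\circ_j}$ (which is a tolerance by Lemma~\ref{prop:hcongenerate} and inherits $(n)$-symmetry from $R$), it will suffice to show that if $\mu\in R^{\circ_j}$ has every $(i)$-supporting line a $\delta$-pair with $i\ne j$, then the $(i)$-pivot line of $\mu$ is a $\delta$-pair. I would write $\mu$ as the glue in direction $j$ of a chain $\mu_0,\mu_1,\dots,\mu_{m-1}\in R$ with successive $j$-faces $x_0,x_1,\dots,x_m$. The hypothesis translates into the statement that all $(i)$-lines of $x_0$ are $\delta$-pairs and all $(i)$-supporting lines of $x_m$ are $\delta$-pairs, but it carries no direct information about the intermediate $x_1,\dots,x_{m-1}$. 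For $n=2$ the result is the perpendicular stage of Lemma~\ref{lem:binaryh=tc}, so I would assume $n\ge 3$ and fix an auxiliary direction $l\in n\setminus\{i,j\}$.

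The core of the argument will mirror the ternary perpendicular stage of Lemma~\ref{lem:ternaryh=tc}. Let $(u,v)$ denote the endpoints of the $(i)$-pivot line of $\mu$. For each $e\in\sigma(t)$, I plan to construct an auxiliary cube $\nu_e\in R^{\circ_j}$ by applying the Taylor component $t_e$ to $\mu$ together with suitable reflexive and symmetric variants, in direct parallel to Definition~\ref{def:rotations}; since $R^{\circ_j}$ is a tolerance, the result lies in $R^{\circ_j}$. Reading off the $(i,l)$-cross-section squares of $\nu_e$ via Lemma~\ref{lem:shiftrotdef}(1), and iteratively applying $(\delta,k)$-centrality of $R$ for $k\in n\setminus\{i,j\}$ to each piece of the chain underlying $\nu_e$, the known $\delta$-information from $x_0$ and from the supporting lines of $x_m$ will be transported past the uncontrolled intermediate faces to yield
\[
\langle t_e(u,\psi_e(u,v)),\ t_e(v,\psi_e(u,v))\rangle \in\delta.
\]
Using $M(\theta,\dots,\theta)\le R$, I would then apply $(\delta,i)$-centrality of $R$ to a cube built from the generators $\cube_i(u,v)$ together with suitable $\cube_k$-generators to replace $\psi_e(u,v)$ with any $\eta\in\{u,v\}^{\sigma(t)-1}$, obtaining $\langle t_e(u,\eta),t_e(v,\eta)\rangle\in\delta$ for every such $\eta$ and every $e$. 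As at the end of Lemmas~\ref{lem:binaryh=tc} and~\ref{lem:ternaryh=tc}, chaining these pairs while varying $\eta$ one coordinate at a time and using the idempotence of $t$ will produce a $\delta$-chain from $t(u,\dots,u)=u$ to $t(v,\dots,v)=v$, so that $\langle u,v\rangle\in\delta$ follows by transitivity of $\delta$.

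The main obstacle will be the combinatorial bookkeeping in the previous paragraph: one has to verify that each application of $(\delta,k)$-centrality of $R$ along the chain underlying $\nu_e$ is valid, i.e., that the relevant $(k)$-supporting lines are actually $\delta$-pairs at the moment of invocation. This is the higher-dimensional analogue of the role played by $(\delta,2)$-centrality in the ternary case, and I expect to manage it by iterating the single-perpendicular-direction step across all of $n\setminus\{i,j\}$ in sequence, using the tree-indexed rotation bookkeeping of Lemma~\ref{lem:treesuccessorrotations} to track the order and the cumulative effect of the successive rotations that enter into the construction of the $\nu_e$.
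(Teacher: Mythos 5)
Your skeleton matches the paper's: decompose $\mu\in R^{\circ_j}$ into a chain of elements of $R$, use rotations to move the known $\delta$-direction into an auxiliary direction, apply centrality of $R$ link by link along the chain, and finish with the generator/companion argument that replaces $\psi_e(u,v)$ by arbitrary $\eta\in\{u,v\}^{\sigma(t)-1}$ and chains over the coordinates of $t$. But the middle paragraph, as written, has a genuine gap for $n\geq 4$. To apply $(\delta,l)$-centrality of $R$ to a link $\glue_{\{j\}}(\langle\nu_r,\nu_{r+1}\rangle)$ you need \emph{every} $(l)$-supporting line of that cube to be a $\delta$-pair, and for the intermediate chain elements $x_1,\dots,x_{m-1}$ you have no $\delta$-information at all. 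A single rotation $\rrot{n}{e}{i,l}$ only forces the $(l)$-lines sitting in the $f(i)=0$ columns of the $(i,l)$-cross-section squares to be constant; the $(l)$-supporting lines indexed by $f$ with $f(i)=1$ but $f(k)=0$ for some third direction $k\notin\{i,j,l\}$ are of the form $\langle t_e(b_f,\phi_e(a_f,c_f)),\,t_e(d_f,\psi_e(a_f,c_f))\rangle$ with $a_f,b_f,c_f,d_f$ coming from an uncontrolled intermediate face, and nothing makes these $\delta$-pairs. This is exactly why the paper composes rotations through \emph{all} of the non-chain directions down to a leaf of $\mathbb{D}_{n-1}$: only after the full composition does Lemma \ref{lem:treesuccessorrotations}(2) guarantee that all the relevant supporting lines of every rotated chain element are constant, so that $(\delta,n-2)$-centrality can be applied along the chain with no hypotheses on the intermediate faces.

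The second, related omission is the return trip. The leaf-level chain induction gives a $\delta$-pair only for the $(n-2)$-pivot line of the fully rotated terminal cube, not for $\langle t_e(u,\psi_e(u,v)),t_e(v,\psi_e(u,v))\rangle$ directly. One must then induct from the leaves of $\mathbb{D}_{n-1}$ back to the root, and at \emph{every} level invoke the companion argument (Lemma \ref{lem:shiftrotpivotchain}, i.e.\ Lemma \ref{lem:companionproperties}(1) plus $(\delta,z)$-centrality of $R$ and $M(\theta,\dots,\theta)\leq R$) to convert ``$(z+1)$-pivot line of $\rrot{n}{e}{z,z+1}(\cdot)$ is a $\delta$-pair for all $e\in\sigma(t)$'' into ``$(z)$-pivot line is a $\delta$-pair.'' This is why the bookkeeping object is a $\sigma(t)$-branching tree rather than a single sequence of rotations: the $\eta$-replacement and Taylor-chaining step you describe once at the end must be performed at each level, and it consumes all $\sigma(t)$ rotation choices at that level. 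You cite the right lemmas, but deferring this to ``bookkeeping'' hides the actual inductive structure (leaf-level chain induction, then leaf-to-root induction) that makes the argument close.
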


\begin{proof}
First, observe that any permutation of coordinates $\sigma \in S_n$ induces an automorphism $\sigma: \A^{2^n} \to \A^{2^n}$, and an $(n)$-dimensional tolerance $R$ has $(\delta, i)$-centrality if and only if the image of $R$ under the induced automorphism has $(\delta, \gamma(i))$-centrality. Furthermore, $\sigma(R^{\circ_j}) = \sigma(R)^{\circ_{\sigma(j)}}$. Therefore, we need only to consider the case when $i = 0$ and $j=n-1$. 

So, let $\gamma \in R^{\circ_{n-1}}$ be such that every $(0)$-supporting line is a $\delta$-pair. Our task is to show that the $(0)$-pivot line of $\gamma$ is also a $\delta$-pair. By the definition of $R^{\circ_{n-1}}$, there are $\mu_0, \dots, \mu_{s-1} \in A^{2^{n-1}}$ so that 
\begin{enumerate}
\item $\faces_{n-1}^0(\gamma) = \mu_0$,
\item $ \faces_{n-1}^1(\gamma) = \mu_{s-1}$, and
\item $\glue_{\{n-1\}}(\langle \mu_r, \mu_{r+1} \rangle ) \in R$, for each 
$r \in s-1$.
\end{enumerate}

\begin{claim}\label{claim:perpclaim1}

Take $d \in \mathbb{D}_{n-1}$ to be a leaf. The sequence 
$
(\mu_0)^d, \dots, (\mu_{s-1})^d 
$
satisfies
\begin{enumerate}

\item for all $r \in s$, each $(n-2)$-supporting line of $(\mu_r)^d$ is a $\delta$-pair,
\item every $(n-2)$-cross section line of $(\mu_0)^d $ is a $\delta$-pair, and
\item $\glue_{\{n-1\}}(\langle (\mu_r)^d, (\mu_{r+1})^d \rangle) \in R$,
 for all 
$r \in s-1$.

\end{enumerate}
\end{claim}

\begin{claimproof}
Suppose $d = (d_0, \dots, d_{n-3})$ is a nonempty leaf ($\mathbb{D}_n$ is the graph consisting of a single vertex when $n=2$ and the claim holds in this case). The first property of the claim follows from \emph{(2)} of Lemma \ref{lem:treesuccessorrotations} and the fact that $\delta$ contains all constant pairs. 
To show the second property of the claim, we proceed by induction over the branch in $\mathbb{D}_{n-1}$ determined by $d = (d_0, \dots, d_{n-3})$. We assume $\mu_0 = \faces_{n-1}^0(\gamma)$ and that every $(0)$-supporting line of $\gamma$ is a $\delta$-pair. It follows that every $(0)$-cross section line of $\mu_0 = (\mu_0)^\emptyset$ is a $\delta$-pair, which establishes the basis of the induction. Now let $(d_0, \dots, d_i)$ be an ancestor of $d$ and suppose that every $(i+1)$-cross section line of $(\mu_0)^{(d_0, \dots, d_i)}$ is a $\delta$-pair. Now \emph{(5)} of Lemma \ref{lem:shiftrotdef} shows that every $(i+2)$-cross section line of $(\mu_0)^{(d_0, \dots, d_{i+1})}$ is a $\delta$-pair. In particular, every $(n-2)$-cross section line of $(\mu_0)^d$ is a $\delta$-pair as claimed. 

A similar induction using \emph{(2)} and \emph{(3)} of Lemma \ref{lem:shiftrotdef} establishes the third property of the claim.

\end{claimproof}

\begin{claim}\label{claim:perpclaim2}
If $d \in \mathbb{D}_{n-1}$ is a leaf, then the $(n-2)$-pivot line of $(\mu_{r+1})^d$ is a $\delta$-pair for all $r \in s-1$.
\end{claim}
\begin{claimproof}
The claim follows by induction on $r \in  s$. The claim holds for $r=0$ by \emph{(2)} of Claim  \ref{claim:perpclaim1}. Suppose the claim holds for $\mu_r$ for $r \in s-2$. This assumption along with \emph{(1)} and \emph{(3)} of Claim \ref{claim:perpclaim1} show that every $(n-2)$-supporting line of
\[
\glue_{\{n-1\}}(\langle (\mu_{r+1})^d, (\mu_{r+2})^d \rangle) \in R
\]
is a $\delta$-pair. We now apply the assumption that $R$ has $(\delta, n-2)$-centrality and conclude that the $(n-2)$-pivot line of this cube is also a $\delta$-pair. Because 
\[\glue_{\{n-1\}}(\langle (\mu_{r+1})^d, (\mu_{r+2})^d \rangle)
\] and $(\mu_{r+2})^d$ have the same $(n-2)$-pivot line, the claim is proved. 
\end{claimproof}

\begin{claim}\label{claim:perpclaim3}
Let $c  \in \mathbb{D}_{n-1}$ be a tuple of length $z \in n-1$. The $(z)$-pivot line of $(\mu_{s-1})^c$ is a $\delta$-pair. In particular, the $(0)$-pivot line of 
$(\mu_{s-1})^\emptyset = \mu_{s-1}$ is a $\delta$-pair.
\end{claim}
\begin{claimproof}

We proceed by an induction from the leaves of $\mathbb{D}_{n-1}$ to its root. The basis has been established by Claim \ref{claim:perpclaim2}. Suppose that the claim holds for all tuples of length $z +1 \in n-1$ belonging to $\mathbb{D}_{n-1}$ and let $c \in \mathbb{D}_{n-1}$ be a tuple of length $z$. Our assumption that every $(0)$-supporting line of $\gamma$ is a $\delta$-pair implies that every $(0)$-supporting line of $\mu_{s-1}$ is a $\delta$-pair. An induction using \emph{(4)} of Lemma \ref{lem:shiftrotdef} shows that every $(z)$-supporting line of $\mu^c$ is a $\delta$-pair. It follows that the assumptions of Lemma \ref{lem:shiftrotpivotchain} are satisfied, with 
$
\mu = \glue_{\{n-1\}}( \langle (\mu_{s-2})^c, (\mu_{s-1})^c \rangle )$, 
$i = n-1$, 
$j= z$, 
and $ l= z+1$. This completes the proof of the perpendicular stage.

\end{claimproof}

The $(0)$-pivot lines of $\gamma$ and $\mu_{s-1}$ are the same, and the conclusion of Claim \ref{claim:perpclaim3} is that the $(0)$-pivot line of $\mu_{s-1}$ is a $\delta$-pair. This is what we wanted to show, so the proof is finished.

\end{proof}

To summarize some important aspects of the proof of Proposition \ref{prop:perpstage}, we include Figure \ref{fig:perpendicularstage}. Three of the directions in $2^n$ are shown next to a picture of $\faces_{n-1}^0(\gamma)$. Each of the $(0)$-supporting lines of $\gamma$ is drawn with a solid curved line to indicate that it is a $\delta$-pair (we hope the reader will forgive us for not drawing a correct number of these.) Underneath $\gamma$ is the sequence $\mu_0, \dots, \mu_{s-1}$, where each consecutive pair glues together to form a cube belonging to $R$. This sequence is systematically rotated, with the tree $\mathbb{D}_{n-1}$ keeping track of the many possibilities. A typical sequence that is produced by a leaf $d \in \mathbb{D}_{n-1}$ is shown at the bottom of the figure with constant pairs drawn in bold (see Claim \ref{claim:perpclaim1}). The dotted curved lines on this leaf sequence indicate the application of the centrality assumption (see \ Claim \ref{claim:perpclaim2}). The final induction from the leaves of $\mathbb{D}_{n-1}$ to the root is not depicted.

\begin{figure}[tb]
\begin{center}
\includegraphics[scale=1]{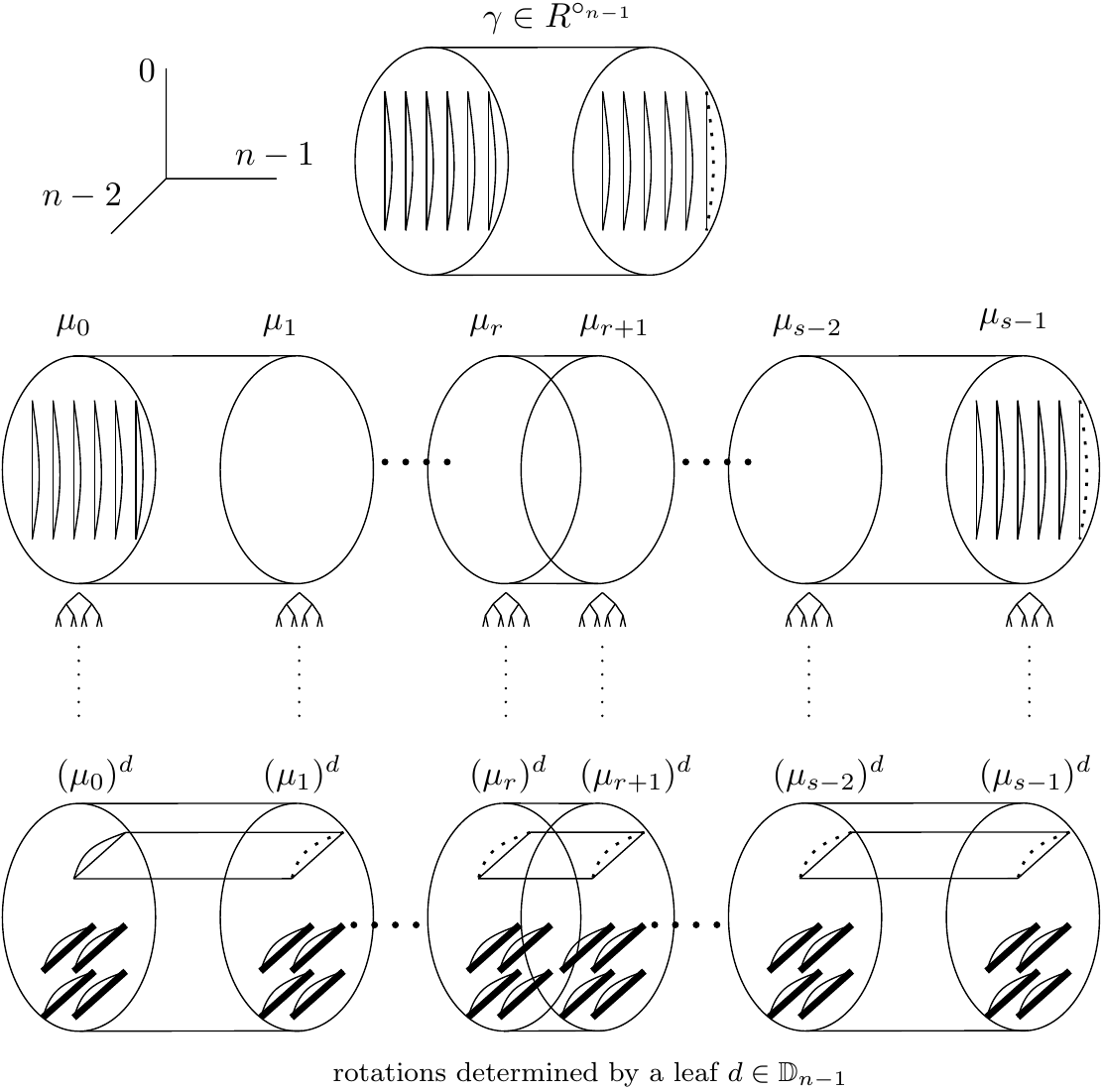}
\end{center}
\caption{Perpendicular stage }\label{fig:perpendicularstage}
\end{figure}

We now move to the parallel stage. Instead of the special compositions of rotations that we used in the perpendicular stage, we need to consider certain sequences of companions. 

\begin{prop}[Parallel Stage]\label{prop:parallelstage}
Let $\var$ be a variety with a Taylor term $t$ and associated terms $t_0, \dots, t_{\sigma(t) -1}$. Let $\A \in \var$, let $\theta, \delta \in \Con(\A)$ and choose $n\geq 2$. Suppose $R$ is an $n$-dimensional tolerance of $\A$ such that
$M(\theta, \dots, \theta) \leq R \leq \rect(\theta, \dots, \theta)$ 
and 
$R$ has $(\delta, k)$-centrality for all $k \in n$. Let $j \in n$. Then, $R^{\circ_j}$ has $(\delta, j)$-centrality.
\end{prop}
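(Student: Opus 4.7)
The plan is to imitate the strategy of the ternary parallel stage (Lemma \ref{lem:ternaryh=tc}) using the rotation and companion machinery from Subsection \ref{subsec:rotandcomp}, with the advantage that the perpendicular stage (Proposition \ref{prop:perpstage}) is now available as an input. By the same permutation-of-coordinates argument used there, I would first reduce to $j = n-1$. Fix $\gamma \in R^{\circ_{n-1}}$ whose every $(n-1)$-supporting line is a $\delta$-pair, write $\mu_0, \ldots, \mu_{s-1}$ for a witnessing sequence, and let $\langle c, d\rangle$ denote the $(n-1)$-pivot line of $\gamma$. The task is to prove $\langle c, d\rangle \in \delta$.

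The main step would be to show, for every $e \in \sigma(t)$ and every tuple $\eta \in \{c,d\}^{\sigma(t)-1}$, that $\langle t_e(c, \eta), t_e(d, \eta)\rangle \in \delta$. Once this is in hand, the chain
\begin{equation*}
c = t(c,c,\ldots,c) \equiv_\delta t(d,c,\ldots,c) \equiv_\delta \cdots \equiv_\delta t(d,d,\ldots,d) = d
\end{equation*}
follows from the idempotence of $t$, exactly as at the end of the proof of Lemma \ref{lem:companionproperties}(1). To establish the main step I would begin with the canonical case $\eta = \phi_e(c,d)$: applying $t_e$ in direction $n-1$, with first argument a suitable rotation of $\gamma$ and remaining arguments of the form $\cube_{n-1}(x,y)$ with $\langle x,y\rangle \in \theta$ chosen according to the $e$-th Taylor identity (as in Definition \ref{def:companions}), produces a cube in $R^{\circ_{n-1}}$ whose $(n-1)$-supporting lines can be verified to be $\delta$-pairs by a tree induction over $\mathbb{D}_{n-1}$ of the same shape as Claims \ref{claim:perpclaim1}--\ref{claim:perpclaim3}, powered by the initial hypothesis on $\gamma$, the $(\delta,k)$-centrality of $R^{\circ_{n-1}}$ for $k \neq n-1$ delivered by the perpendicular stage, and the transitivity of $\delta$. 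The Taylor equation $t_e(c,\phi_e(c,d)) = t_e(d,\psi_e(c,d))$ then yields $\langle t_e(c,\phi_e(c,d)), t_e(d,\phi_e(c,d))\rangle \in \delta$, and one transitions from $\phi_e$ to an arbitrary $\eta$ by iteratively swapping coordinates, each swap being underwritten by an auxiliary $M(\theta,\ldots,\theta)$-rectangle that lives in $R$ by hypothesis.

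The hardest part will be the coordinate-swapping step: the perpendicular stage gives us $(\delta,k)$-centrality of $R^{\circ_{n-1}}$ only for $k \neq n-1$, and $(\delta,n-1)$-centrality of $R^{\circ_{n-1}}$ is precisely what we are trying to prove, so it cannot be used here. The argument must instead produce enough cubes that live in $R$ itself---where $(\delta,n-1)$-centrality is a hypothesis---to justify each swap. This is exactly the role played by the ``symmetric version of $(3')$'' at the end of the ternary proof: by lifting $M(\theta,\ldots,\theta)$-matrices along multiple axes of $\gamma$ simultaneously, we obtain cubes in $R$ to which the hypothesized centrality of $R$ directly applies. Engineering this lifting in $n$ dimensions, while keeping the decomposition of $\gamma$ into $\mu_0,\ldots,\mu_{s-1}$ under control, will be the principal technical challenge.
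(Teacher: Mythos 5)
You have the right skeleton --- reduce to a single coordinate, aim for the idempotence chain $c = t(c,\dots,c) \equiv_\delta \cdots \equiv_\delta t(d,\dots,d) = d$, use the perpendicular stage for the rotation half of the argument, and recognize that the hypothesized centrality of $R$ can only be applied to cubes lying in $R$ itself --- but the step you defer as ``the principal technical challenge'' is the actual content of the proposition, and your proposal does not supply it. The paper's resolution (stated in the convention $j=0$) is a tree induction over $\mathbb{D}_n$ using iterated \emph{companions} rather than an ad hoc lifting: setting ${}^{\emptyset}\gamma = \gamma$ and ${}^{d}\gamma = \ccom{d_{i-1}}{0,1,i}({}^{c}\gamma)$ for $d$ of length $i$ with predecessor $c$, one proves simultaneously that (a) ${}^{d}\gamma \in R^{\circ_0}$, (b) every $(0)$-supporting line of ${}^{d}\gamma$ is a $\delta$-pair (this is where the rotation $\rrot{n}{d_{i-1}}{0,1}$ and the perpendicular stage enter), and (c) $\cut_{n\setminus\{0,\dots,i\}}({}^{d}\gamma)_{\textbf{1}} \in M(\theta,\dots,\theta)$ on $i+1$ coordinates. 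Invariant (c) answers your question of how to engineer the lifting: each companion taken in a new direction absorbs one more coordinate into $M(\theta,\dots,\theta)$, so at a leaf ($i=n-1$) the entire cube lies in $M(\theta,\dots,\theta)\leq R$ and the hypothesized $(\delta,0)$-centrality of $R$ applies outright. The return trip from the leaves to the root applies \emph{(1)} of Lemma \ref{lem:companionproperties} --- the idempotence chain --- at every node, which is exactly why the tree branches $\sigma(t)$-fold at each level. Without this invariant (or an equivalent progress measure), the two-step bootstrap you cite from the ternary proof does not obviously terminate in $n$ dimensions, and your coordinate-swapping step remains an unproved assertion.

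Two smaller corrections. First, the witnessing sequence $\mu_0,\dots,\mu_{s-1}$ plays no role in the parallel stage: the companions are applied to $\gamma$ as an element of the tolerance $R^{\circ_j}$, which satisfies $M(\theta,\dots,\theta)\leq R^{\circ_j}\leq \rect(\theta,\dots,\theta)$ and, by the perpendicular stage, has $(\delta,k)$-centrality for all $k\neq j$; so your worry about keeping the chain decomposition under control is inherited from the perpendicular stage and is not relevant here. Second, the recursion runs over the $n-1$ remaining \emph{directions} of the cube, not over the coordinates of the Taylor term, and one does not need $\langle t_e(c,\eta),t_e(d,\eta)\rangle\in\delta$ for arbitrary $\eta\in\{c,d\}^{\sigma(t)-1}$ --- only for the staircase tuples $(d,\dots,d,c,\dots,c)$ that appear as the pivot lines of the companions and that are precisely the links of the idempotence chain.
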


\begin{proof}
A justification similar to the one given at the beginning of the proof of Proposition \ref{prop:perpstage} allows us to consider without loss the case when $j =0$. So, let $\gamma \in R^{\circ_0}$ have the property that every $(0)$-supporting of $\gamma$ is a $\delta$-pair. We want to show that the $(0)$-pivot line of $\gamma$ is also a $\delta$-pair. This is accomplished by an induction on the tree $\mathbb{D}_n$. Set $^\emptyset \gamma = \gamma$. For $d \in \mathbb{D}_n$ a tuple of length $ i >0$, set $^d \gamma =\ccom{d_{i-1}}{0,1, i}(^c\gamma)$, where $c$ is the predecessor of $d$.

\begin{claim}\label{claim:parclaim1}
Let $d \in \mathbb{D}_n$ be a tuple of length $i \in n$. The following hold:

\begin{enumerate}

\item $^d\gamma \in R^{\circ_0}$, 
\item Every $(0)$-supporting line of $^d\gamma$ is a $\delta$-pair, and
\item $(\cut_{Q} (^d\gamma))_\textbf{1} \in M(\underbrace{\theta, \dots, \theta}_{i+1})$, where $Q = n \setminus \{0, \dots, i\}$. In particular, $^d \gamma \in M(\underbrace{\theta, \dots, \theta}_n)$ if $d$ is a leaf. 

\end{enumerate}
\end{claim}

\begin{claimproof}
We proceed by induction on the length of $d$. If $d$ is the empty tuple then $^d \gamma = \gamma$ and both \emph{(1)} and \emph{(2)} hold by assumption. Set $Q = n \setminus \{0\}$. In this case $\cut_Q(\gamma)_\textbf{1}$ is the $(0)$-pivot line of $\gamma$ and is a $\theta$-pair, because $ R \leq \rect (\theta, \dots, \theta)$. We notice that $\theta$ and $M(\theta)$ are the same $1$-dimensional relation, which establishes \emph{(3)}. Therefore, the claim holds for the root of $\mathbb{D}_n$.

Suppose that the claim holds for some $c \in \mathbb{D}_n$ and let $d = (d_0, \dots, d_{i-1})$ be a successor of $c$. We will establish the claim for $^d \gamma = \ccom{d_{i-1}}{0,1,i}(^c h)$. First, notice that it follows from our assumptions that 
\[
M(\underbrace{\theta, \dots, \theta}_n) \leq R^{\circ_0} \leq \rect (\underbrace{\theta, \dots, \theta}_n),
\]
and so \emph{(4)} of Lemma \ref{lem:companionproperties} proves \emph{(1)} of the claim. 

Next, we want to show that every $(0)$-supporting line of $^d \gamma $ is a $\delta$-pair. We assume that every $(0)$-supporting line of $^c\gamma$ is a $\delta$-pair. It follows from \emph{(3)} of Lemma \ref{lem:companionproperties} that every $(0)$-supporting line of $^d \gamma$ that is not the $(0)$-pivot line of $\faces^0_i(^d\gamma)$ is $\delta$-pair. In view of \emph{(2)} of Lemma \ref{lem:companionproperties}, we must demonstrate that the $(0)$-pivot line of $\rrot{n}{d_{i-1}}{0,1}(^c\gamma)$ is a $\delta$-pair. It follows from \emph{(3)} and \emph{(4)} of Lemma \ref{lem:shiftrotdef} that $\rrot{n}{d_{i-1}}{0,1}(^c\gamma) \in R^{\circ_0}$, and that every $(1)$-supporting line of $\rrot{n}{d_{i-1}}{0,1}(^c\gamma)$ is a $\delta$-pair. Applying Proposition \ref{prop:perpstage}, we conclude that the $(1)$-pivot line of $\rrot{n}{d_{i-1}}{0,1}(^c\gamma)$ is a $\delta$-pair. An argument similar to the one given in the proof of Lemma \ref{lem:shiftrotpivotchain} shows that the $(0)$-pivot line of $\rrot{n}{d_{i-1}}{0,1}(^c\gamma)$ is a $\delta$-pair. This proves \emph{(2)} of the claim. 

Last, we prove \emph{(3)} of the claim. We assume that \emph{(3)} holds for $^c\gamma$. Let $Q = n \setminus \{ 0, \dots, i \}$. Referring to Definition \ref{def:companions} (with $^c\gamma$ taking the place of the $\gamma$ from the definition), we compute

\begin{align*}
\cut_Q (^d\gamma)_\textbf{1} &= \cut_Q (t_{d_{i-1}}( \refl_i^1(^c\gamma), \epsilon_0, \dots, \epsilon_{\sigma(t)-2}))_\textbf{1}\\
&= t_{d_{i-1}}(\cut_Q (\refl_i^1(^c\gamma))_\textbf{1}, \cut_Q(\epsilon_0)_\textbf{1}, \dots, \cut_Q ( \epsilon_{\sigma(t) -2} )_\textbf{1}).
\end{align*}
If each of the arguments of $t_{d_{i-1}}$ in the last expression belongs to $ M(\underbrace{\theta, \dots, \theta}_{i+1}) $, then so does $\cut_Q (^d\gamma)_\textbf{1}$. We observed in the proof of Lemma \ref{lem:companionproperties} that 
$\epsilon_0, \dots, \epsilon_{\sigma(t) -2} \in M(\underbrace{\theta, \dots, \theta}_n)$, so it follows from Lemma \ref{lem:deltaprojectstodelta} that 
$\cut_Q(\epsilon_0)_\textbf{1}, \dots,\cut_Q( \epsilon_{\sigma(t) -2} )_\textbf{1} \in M(\underbrace{\theta, \dots, \theta}_{i+1})$. 

So, it remains to show that $\cut_Q (\refl_i^1(^c\gamma))_\textbf{1} \in M(\underbrace{\theta, \dots, \theta}_{i+1})$. Observe that 
\begin{align*}
\cut_Q (\refl_i^1(^c\gamma))_\textbf{1}  &= \refl_i^1(\cut_Q(^c\gamma)_\textbf{1})\\
&= \glue_{\{ i \}}( \langle \faces_i^1(\cut_Q(^c\gamma)_\textbf{1}), \faces_i^1(\cut_Q(^c\gamma)_\textbf{1}) \rangle ).
\end{align*}
The inductive assumption that \emph{(3)} holds for $^c\gamma$ implies that 
$
\faces_i^1(\cut_Q(^c\gamma)_\textbf{1})= \cut_{Q'}(^c\gamma)_\textbf{1} \in M(\underbrace{\theta, \dots, \theta}_{i}),
$
 where $Q' = n \setminus \{0, \dots, i -1 \}$. Call this $(i)$-cube $\zeta$. We have shown that 
$
\cut_Q (\refl_i^1(^c\gamma))_\textbf{1} = \glue_{i}(\langle \zeta,\zeta \rangle )
$
for $\zeta \in M(\underbrace{\theta, \dots, \theta}_i)$. This proves that $\cut_Q (\refl_i^1(^c\gamma))_\textbf{1} \in M(\underbrace{\theta, \dots, \theta}_{i+1})$.

\end{claimproof}
Using Claim \ref{claim:parclaim1}, we are now able to prove the following claim, which finishes the proof of the parallel stage.
\begin{claim}\label{claim:parclaim2}
Let $c  \in \mathbb{D}_{n}$ be a tuple of length $i \in n$. The $(0)$-pivot line of $^c\gamma$ is a $\delta$-pair. In particular, the $(0)$-pivot line of 
$^\emptyset\gamma = \gamma$ is a $\delta$-pair.
\end{claim}

\begin{claimproof}
Let $d \in \mathbb{D}_n$ be a leaf. It follows from \emph{(2)} and \emph{(3)} of Claim \ref{claim:parclaim1} along with the assumptions that $R$ has $(\delta, 0)$-centrality and $M(\theta, \dots, \theta) \leq R$ that the $(0)$-pivot line of $^dh$ is a $\delta$-pair. This establishes the basis of an induction from the leaves of $\mathbb{D}_n$ to the root. For the inductive step, suppose the claim holds for all tuples of length $i+1 \in n$ and let $c \in \mathbb{D}_n$ be a tuple of length $i$. The inductive assumption implies that the $(0)$-pivot line of 
$
^d\gamma = \ccom{d_{i}}{0,1,i+1}(^c\gamma)
$
is a $\delta$-pair for every successor $d$ of $c$. We now apply \emph{(1)} of Lemma \ref{lem:companionproperties} and conclude that the $(0)$-pivot line of $^c\gamma$ is also a $\delta$-pair. 
\end{claimproof}

\end{proof}

Some of the important aspects of the proof of Proposition \ref{prop:parallelstage} in the case $n=4$ are depicted in Figure \ref{fig:parallelstage}. We orient the coordinates $\{0, 1, 2, 3\}$ in this order: horizontal, vertical, out of page, and inward. The top left cube is a picture of $\gamma$, and the diagram illustrates a typical $^d\gamma$ as $d$ progresses through $\mathbb{D}_4$. The bold projections indicate membership in $M(\theta, \theta)$, $M(\theta, \theta, \theta)$, and $M(\theta, \theta, \theta, \theta)$. The bold solid curved lines are equal to the $(0)$-pivot line of the appropriate rotated cube and are $\delta$-related as a consequence of the perpendicular stage. The centrality assumption applies to $^d\gamma$ whenever $d$ is a leaf of $\mathbb{D}_4$, and so the $(0)$-pivot line of such cubes is a $\delta$-related pair. This is indicated with a curved dashed line. An induction from the leaves to the root of $\mathbb{D}_4$ shows that the $(0)$-pivot line of $\gamma$ is also $\delta$-related.  

\begin{figure}
\centering
\includegraphics[scale=.8]{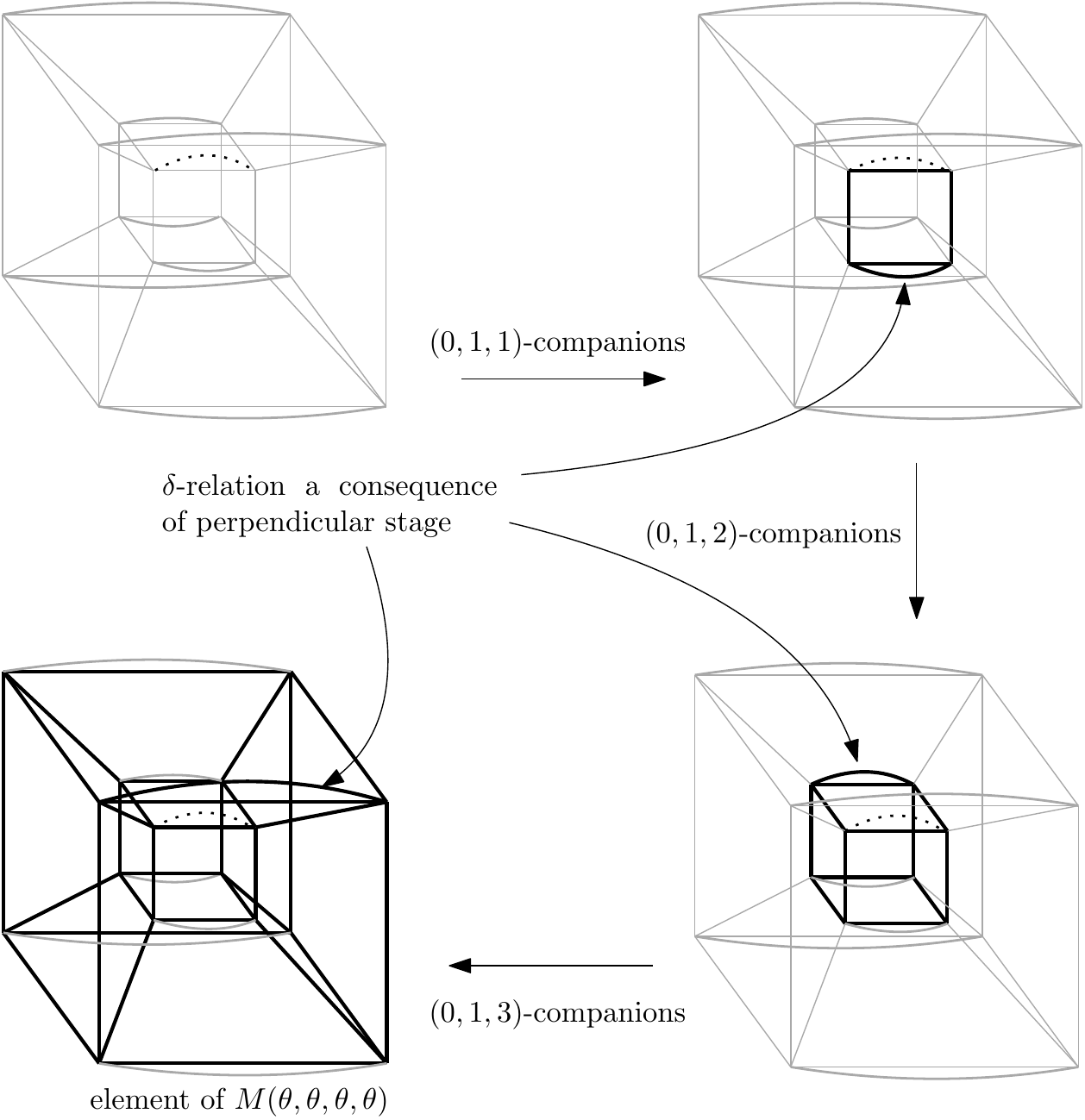}
\caption{$(4)$-dimensional parallel stage}\label{fig:parallelstage}
\end{figure}

We are now ready to prove one of our main results.

\begin{thm}\label{thm:allarityH=TC}
Let $\var$ be a Taylor variety, $\A \in \var$, $\theta \in \Con(\A)$. For every $n \geq 2$, 
\[
[\underbrace{\theta, \dots, \theta}_n ]_{TC} = [\underbrace{\theta, \dots, \theta}_n]_H.
\]
\end{thm}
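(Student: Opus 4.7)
The plan is to follow the template established for the binary case in Theorem \ref{thm:binarytc=h}, now armed with the fully general perpendicular and parallel stage results (Propositions \ref{prop:perpstage} and \ref{prop:parallelstage}). By Theorem \ref{thm:basicpropertiescommutator}(4), the inequality $[\theta, \ldots, \theta]_{TC} \leq [\theta, \ldots, \theta]_H$ is automatic, so the substance lies entirely in proving the reverse containment.

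To establish $[\theta, \ldots, \theta]_H \leq [\theta, \ldots, \theta]_{TC}$, set $\delta = [\theta, \ldots, \theta]_{TC}$; the goal is to show that $\Delta(\theta, \ldots, \theta)$ has $(\delta, n-1)$-centrality. By definition, $M(\theta, \ldots, \theta)$ has $(\delta, n-1)$-centrality. A brief symmetry step upgrades this: since every coordinate permutation of $n$ induces an automorphism of $\A^{2^n}$ that fixes $M(\theta, \ldots, \theta)$ (because all its arguments are the same congruence), the relation in fact has $(\delta, i)$-centrality for every $i \in n$. Now iterate the construction of Lemma \ref{prop:hcongenerate}(2): put $R_0 = M(\theta, \ldots, \theta)$ and $R_{j+1} = R_j^{\circ_{i_{j \bmod n}}}$. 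By Lemma \ref{prop:hcongenerate}(1) each $R_j$ is an $(n)$-dimensional tolerance, and by Lemma \ref{lem:basicdelta0} together with monotonicity of $\tc$ we have
\[
M(\theta, \ldots, \theta) \leq R_j \leq \Delta(\theta, \ldots, \theta) \leq \rect(\theta, \ldots, \theta).
\]

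Next, a straightforward induction on $j$ shows that each $R_j$ has $(\delta, i)$-centrality for every $i \in n$. The base case is the symmetry observation above. For the inductive step, Proposition \ref{prop:perpstage} applied to $R_j$ with the coordinate $i_{j \bmod n}$ gives $(\delta, i)$-centrality of $R_{j+1}$ for every $i \neq i_{j \bmod n}$, and Proposition \ref{prop:parallelstage} applied in the same configuration supplies the remaining direction $i = i_{j \bmod n}$. Finally, Lemma \ref{lem:basicdelta2} identifies $\Delta(\theta, \ldots, \theta)$ with $\bigcup_{j \in \nat} R_j$, and since $(\delta, n-1)$-centrality is a universally quantified condition on the members of a relation, it is preserved under unions of increasing chains; hence $\Delta(\theta, \ldots, \theta)$ has $(\delta, n-1)$-centrality, whence $[\theta, \ldots, \theta]_H \leq \delta$.

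There is no serious obstacle remaining: the deep content has already been extracted into Propositions \ref{prop:perpstage} and \ref{prop:parallelstage}, and the present theorem is an assembly argument. The only subtle point is the symmetry step that promotes $(\delta, n-1)$-centrality of $M(\theta, \ldots, \theta)$ to centrality in every direction, which uses in an essential way that the arguments of the commutator are all equal (this is precisely why the theorem is stated only for constant tuples of congruences).
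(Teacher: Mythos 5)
Your proposal is correct and follows essentially the same route as the paper: the easy inequality via Theorem \ref{thm:basicpropertiescommutator}, then an induction along the chain $R_j$ approximating $\Delta(\theta,\dots,\theta)$ in which Propositions \ref{prop:perpstage} and \ref{prop:parallelstage} supply the perpendicular and parallel directions of the inductive step, concluding by taking the union. Your explicit symmetry remark establishing the base case (that $M(\theta,\dots,\theta)$ has $(\delta,i)$-centrality for every $i$, not just $i=n-1$) is a detail the paper leaves implicit, and it is correctly justified.
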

\begin{proof}

By Theorem \ref{thm:basicpropertiescommutator}, 
$
[\theta, \dots, \theta ]_{TC} \leq [\theta, \dots, \theta]_H.
$
 We show that $[\theta, \dots \theta]_H \leq [\theta, \dots \theta]_{TC}$. Set $\delta = [\theta, \dots, \theta]_{TC}$. It suffices to check that $\Delta(\theta, \dots, \theta)$ has $(\delta, i)$-centrality, for each $i \in n$. 

We proceed by induction. For each $j \in \nat$ set  $R_j = \tc_j(M(\theta, \dots, \theta))$. It follows inductively from \emph{(1)} of Lemma \ref{prop:hcongenerate} that each $R_j$ is an $(n)$-dimensional tolerance such that $M(\theta, \dots, \theta) \leq R_j \leq \rect(\theta, \dots, \theta)$. Using this, it follows inductively from Propositions \ref{prop:perpstage} and \ref{prop:parallelstage} (the perpendicular and parallel stages) that each $R_j$ has $(\delta, i)$-centrality for ever $i \in n$. Because 
$
\Delta(\theta, \dots, \theta) = \Union_{j \in \nat} R_j,
$
the proof is finished.

\end{proof}

\subsection{Properties of the hypercommutator}\label{subsec:HHC8}

Now we state and prove a relational characterization of the hypercommutator that generalizes Propositions \ref{prop:binhypchar} and \ref{prop:ternaryhypchar}. Let $\A$ be an algebra, $S\finsub \nat$ with $|S| \geq 2$, and $i \in S$. We say a pair $\langle x, y \rangle \in A^2$ is \textbf{$(i)$-supported by $\gamma \in A^{2^S}$} if the $(i)$-pivot line of $\gamma$ is the pair $\langle x,y \rangle$ and every $(i)$-supporting line of $\gamma$ is a constant pair. 

If $\langle x,y \rangle$ is $(i)$-supported by $\gamma$ for every $i \in S$, then we say that $\langle x,y \rangle $ is \textbf{totally supported} by $\gamma$ (in which case $\gamma_f = x$ for all $f \neq \textbf{1}$ and $\gamma_\textbf{1} = y$.) In this situation we call $\gamma$ the \textbf{$(|S|)$-dimensional commutator cube} for the pair $\langle x,y \rangle$, and denote it by $\com_S(x,y)$. We also define 
$S(\gamma, \langle x,y \rangle) = \{ i \in S: \langle x,y \rangle \text{ is $(i)$-supported by $\gamma$} \}.
$

\begin{thm}\label{thm:charhypercomm}
Let $\A$ be an algebra, $n \geq 2$, and $(\theta_0, \dots, \theta_{n-1}) \in \Con(\A)^n$. The following are equivalent:

\begin{enumerate}
\item $\langle x,y \rangle \in [\theta_0, \dots, \theta_{n-1}]_H$,
\item $\com_n(x,y) \in \Delta(\theta_0, \dots, \theta_{n-1})$, and
\item there exists $i \in n$ so that $\langle x,y \rangle $ is $(i)$-supported by some $\gamma \in \Delta(\theta_0, \dots, \theta_{n-1})$.
\end{enumerate}
\end{thm}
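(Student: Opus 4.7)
The plan is to mirror the structure of Proposition \ref{prop:ternaryhypchar}, handling the additional coordinates by an induction that exploits the $(n)$-symmetry, reflexivity, and transitivity of $\Delta(\theta_0,\dots,\theta_{n-1})$. The implication $(2)\Rightarrow(3)$ is immediate: take $\gamma = \com_n(x,y)$ and any $i \in n$; by construction the $(i)$-pivot line of $\com_n(x,y)$ is $\langle x,y\rangle$ while every other $(i)$-line is the constant pair $\langle x,x\rangle$, so $\langle x,y\rangle$ is $(i)$-supported by $\com_n(x,y)$.

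The main work lies in $(3)\Rightarrow(2)$. Suppose $\gamma \in \Delta(\theta_0,\dots,\theta_{n-1})$ $(i)$-supports $\langle x,y\rangle$; I induct on $n - |S(\gamma,\langle x,y\rangle)|$. If this quantity is zero then $\gamma = \com_n(x,y)$ and we are done. Otherwise pick $j \notin S(\gamma,\langle x,y\rangle)$ and construct a new cube $\gamma^+ \in \Delta$ with $\{j\} \cup S(\gamma,\langle x,y\rangle) \subseteq S(\gamma^+,\langle x,y\rangle)$ as follows. Using Corollary \ref{lem:hyperreflexive} together with $\sym_j$ and $\refl_j^0,\refl_j^1$ (which preserve $\Delta$ by $(n)$-symmetry and reflexivity), construct two auxiliary cubes $\alpha,\beta \in \Delta$ that agree with $\gamma$ along the directions in $S(\gamma,\langle x,y\rangle)$ but whose $(j)$-supporting lines are constant. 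The matching faces $\faces_j^1(\alpha) = \faces_j^0(\gamma)$ and $\faces_j^1(\gamma) = \faces_j^0(\beta)$ then allow a double application of $(n)$-transitivity in direction $j$, producing a $\gamma^+$ whose $(j)$-pivot line is $\langle x,y\rangle$, whose $(j)$-supporting lines are constant, and whose cross-section structure in every other direction is unchanged. Iterating yields $\com_n(x,y) \in \Delta$; this is the higher-dimensional analogue of the construction illustrated in items $(2)$--$(5)$ of Figure \ref{fig:thcchar2}.

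For $(1)\Leftrightarrow(2)$, note that the set of congruences $\delta$ for which $\Delta$ has $(\delta,n-1)$-centrality is closed under arbitrary meets, so by Definition \ref{def:tccomm} the hypercommutator itself satisfies $(\delta,n-1)$-centrality; applying this to $\com_n(x,y)$, whose $(n-1)$-supporting lines are all constant, gives $(2)\Rightarrow(1)$. For $(1)\Rightarrow(2)$, set $\delta = \{\langle x,y\rangle : \com_n(x,y) \in \Delta\}$. Reflexivity of $\delta$ follows from $(n)$-reflexivity of $\Delta$, while symmetry, transitivity, and compatibility with the operations of $\A$ follow from the equivalence $(2)\Leftrightarrow(3)$ combined with closure of $\Delta$ under $\sym_i$, $(n)$-transitivity, and the operations of $\A$. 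Finally, to verify $(\delta,n-1)$-centrality of $\Delta$, one takes $\mu \in \Delta$ with every $(n-1)$-supporting line $\langle a_f,b_f\rangle$ a $\delta$-pair and transitively composes the cubes $\com_n(a_f,b_f) \in \Delta$ into $\mu$ in direction $n-1$, successively replacing each $b_f$ with $a_f$ on the $1$-face until the result $(n-1)$-supports the pivot line of $\mu$; applying $(3)\Rightarrow(2)$ then places this pivot line in $\delta$, whence $[\theta_0,\dots,\theta_{n-1}]_H \leq \delta$. The chief obstacle will be the combinatorial bookkeeping in the inductive step of $(3)\Rightarrow(2)$: choosing $\alpha,\beta$ so that both the new supporting direction $j$ and every old direction in $S(\gamma,\langle x,y\rangle)$ are simultaneously preserved demands careful control of the cross-sections, and the diagrams grow exponentially with $n$.
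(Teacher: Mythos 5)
Your overall architecture coincides with the paper's: you prove \emph{(2)}$\Leftrightarrow$\emph{(3)} by inductively enlarging $S(\gamma,\langle x,y\rangle)$ one coordinate at a time using $\sym$, $\refl$, and $(n)$-transitivity (the paper does this with the single auxiliary cube $\mu=\sym_j(\refl_i^0(\gamma))$ for a chosen $i\in S(\gamma,\langle x,y\rangle)$ and one gluing, so your two-sided gluing is an inessential variant); you get \emph{(2)}$\Rightarrow$\emph{(1)} from meet-closure of the centrality condition; and you prove \emph{(1)}$\Rightarrow$\emph{(2)} by showing that the totally supported pairs form a congruence $\alpha$ for which $\Delta(\theta_0,\dots,\theta_{n-1})$ has $(\alpha,n-1)$-centrality. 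However, you have misplaced the difficulty. The step you flag as the chief obstacle, \emph{(3)}$\Rightarrow$\emph{(2)}, is the short part of the argument; the genuine work, to which the paper devotes three technical claims, is the verification of $(\alpha,n-1)$-centrality, which you compress into a single sentence that does not go through as stated.

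Concretely: given $\mu\in\Delta(\theta_0,\dots,\theta_{n-1})$ whose $(n-1)$-supporting line at $f$ is the $\alpha$-pair $\langle a_f,b_f\rangle$, you propose to ``transitively compose the cubes $\com_n(a_f,b_f)$ into $\mu$ in direction $n-1$, successively replacing each $b_f$ with $a_f$.'' But $\com_n(a_f,b_f)$ has its nonconstant line at position $\textbf{1}$ and is constantly $a_f$ elsewhere, so its $0$-face in direction $n-1$ is the constant cube with value $a_f$; this does not match $\faces_{n-1}^1(\mu)$, and no single $\sym$/$\refl$ image of $\com_n(a_f,b_f)$ yields a cube of $\Delta(\theta_0,\dots,\theta_{n-1})$ whose $(n-1)$-line at the prescribed coordinate $f$ is $\langle b_f,a_f\rangle$ while its $(n-1)$-lines at every other coordinate $g$ are constant at the values $\faces_{n-1}^1(\mu)_g$. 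Manufacturing such correction cubes is precisely the content of the paper's Claim \ref{claim:glueingsupports1} (an induction over $i\in n-1$ that builds $z_i(\gamma)$ out of $\com_n(a,b)$ by repeated symmetrization and gluing) and Claim \ref{claim:gluingsupports2}; the replacement is then performed face by face (over $\faces_k^0$ for $k=0,\dots,n-2$, Claim \ref{claim:glueingsupports3}) rather than line by line, with $\faces_k^0(\refl_{n-1}^0(\gamma))$ serving as the replacement face. Until you supply an argument of this kind, your proof of \emph{(1)}$\Rightarrow$\emph{(2)} has a genuine gap.
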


\begin{proof}
First, we show that \emph{(2)} holds if and only if \emph{(3)} holds. Clearly, \emph{(2)} implies \emph{(3)}, so we prove that \emph{(3)} implies \emph{(2)}. Fix $ \langle x,y \rangle \in A^2$. Suppose that $\gamma \in \Delta(\theta_0, \dots, \theta_{n-1})$ is such that $S(\gamma, \langle x,y \rangle) \neq \emptyset$. If $|S(\gamma, \langle x,y \rangle)| = n$, then $(2)$ holds. Otherwise, there exists $\gamma'$ so that $|S(\gamma', \langle x,y \rangle )| = |S(\gamma, \langle x,y \rangle )| +1$. 

Indeed, pick $i \in S(\gamma, \langle x,y \rangle )$, $j \in n\setminus S(\gamma, \langle x, y \rangle )$, and let $\mu = \sym_j(\refl_i^0(\gamma))$. Because $\Delta(\theta_0, \dots, \theta_{n-1})$ is an $n$-dimensional congruence, it follows that 
\[
 \mu  \in \Delta(\theta_0, \dots, \theta_{n-1}).
\] 
Now set
$
\gamma' = \glue_{\{j\}}(\langle \faces_j^0(\mu) , \faces_j^1(\gamma) \rangle ).
$ 
Notice that $\faces_j^1(\mu) = \faces_j^0(\gamma)$, because every $(i)$-supporting line of $\gamma$ is constant. Therefore, $\gamma' \in \Delta(\theta_0, \dots, \theta_{n-1})$.

Let $ l \in S(\gamma, \langle x, y \rangle )$. We show that $ l \in S(\gamma', \langle x,y \rangle )$. We assume that $\langle x,y \rangle$ is $(l)$-supported by $\gamma$, so the $(l)$-pivot line of $\gamma$ (which is also the $(l)$-pivot line of $\faces_j^1(\gamma)$) is the pair $\langle x,y \rangle$. Therefore, the $(l)$-pivot line of $\gamma'$ is the pair $\langle x,y \rangle$. We must also show that every $(l)$-supporting line of $\gamma'$ is a constant pair. Because $l \neq j$, it follows that a particular $(l)$-supporting line of $\gamma'$ is either an $(l)$-supporting line of $\faces_j^1(\gamma)$ (which is assumed to be a constant pair) or an $(l)$-cross-section line of $\faces_j^0(\mu)$. By definition, $\faces_j^0(\mu) = \faces_j^1(\refl_i^0(\gamma)) = \refl_i^0(\faces_j^1(\gamma))$. Clearly, if $l = i$ then every $(l)$-cross-section line of $\refl_i^0(\faces_j^1(\gamma))$ is a constant pair. If $l \neq i$, then every $(l)$-cross-section line of $\refl_i^0(\faces_j^1(\gamma))$ comes from an $(l)$-cross-section line of $\faces_i^0(\faces_j^1(\gamma))$, each of which is assumed to be a constant pair. 

We observe that also $j \in S(\gamma', \langle x,y \rangle)$. To see this, notice that every $(j)$-cross-section line of $\gamma'$ is a row of some $(j,i)$-cross-section square of $\gamma'$. Let us analyze a generic square and take $f \in 2^{n \setminus \{j,i\}}$. Notice that $2^{n \setminus \{j,i \}}$ is the set of coordinates for $\lines_i(\faces_j^1(\gamma))$. Suppose that $\lines_i(\faces_j^1(\gamma))_f = \langle a, b \rangle $. It follows from the definition of $\gamma'$ that 

\[
\squares_{j,i}(\gamma')_f =  \Square[a][a][a][b] .
\]
If $f \neq \textbf{1}$, then the assumption that $\langle x,y \rangle$ is $(i)$-supported by $\gamma$ gives that $a =b$. In this case each row of the above square is a constant pair. If $f = \textbf{1}$, then $ \langle a,b \rangle = \langle x,y \rangle$. In this case, the above square becomes

\[
\squares_{j,i}(\gamma')_\textbf{1} =  \Square[x][x][x][y] .
\]

The bottom row of the above square is a constant pair. So, every $(j)$-supporting line of $\gamma'$ is a constant pair. The top row of the above square witnesses that the $(j)$-pivot line of $\gamma'$ is equal to the pair $\langle x,y \rangle$. Putting this together, we have shown that $|S(\gamma', \langle x, y \rangle ) | = |S(h, \langle x,y \rangle )| +1$. It follows by induction that \emph{(2)} holds whenever \emph{(3)} holds.

Now we show that \emph{(1)} holds if and only if \emph{(2)} holds. Denote by $\alpha$ the set of $\langle x,y \rangle $ that are totally supported by some $\gamma \in \Delta(\theta_0, \dots, \theta_{n-1})$. By definition, $\Delta(\theta_0, \dots, \theta_{n-1})$ has $([\theta_0, \dots, \theta_{n-1}], n-1)$-centrality, from which it follows that $\alpha \subseteq [\theta_0, \dots, \theta_{n-1}]_H$. So, \emph{(2)} implies \emph{(1)}. 

For the other direction, it is enough to show that $\alpha \in \Con(\A)$ and that \newline $\Delta(\theta_0, \dots, \theta_{n-1})$ has $(\alpha, n-1)$-centrality. Let us show that $\alpha$ is a congruence of $\A$. It is obvious that $\alpha \leq \A^2$. Because $\Delta(\theta_0, \dots, \theta_{n-1})$ contains all constant $\gamma \in A^{2^n}$, reflexivity of $\gamma$ is also immediate. For symmetry, take $\gamma \in A^{2^n}$ that totally supports the pair $\langle x,y \rangle$. The pair $\langle y,x \rangle $ is $(i)$-supported by $\sym_i(\gamma)$ for any $i \in n$, and the result now follows from the equivalence of \emph{(2)} and \emph{(3)}. 

To prove transitivity, take $\langle x, y \rangle, \langle y, z \rangle \in \alpha$. By what we have shown so far, there are $\gamma, \mu \in \Delta(\theta_0, \dots, \theta_{n-1})$ that totally support $\langle y,x \rangle $ (note the reversed order) and $\langle y, z \rangle $, respectively. Now set

\[
 \zeta = \glue_{\{0\}}(\langle \faces_0^1(\gamma), \faces_0^1(\mu) \rangle )
\]
Because $\faces_{0}^0(\gamma)$ and $\faces_{0}^0(\mu)$ are both constant cubes with value $y$, it follows that $\zeta \in \Delta(\theta_0, \dots, \theta_{n-1})$. It is easy to see that $\langle x, z \rangle$ is $(0)$-supported by $\zeta$, so we conclude that $\langle x, z \rangle \in \alpha$.

It remains to check that $\Delta(\theta_0, \dots, \theta_{n-1})$ has $(\alpha, n-1)$-centrality. Suppose that $\gamma \in \Delta(\theta_0, \dots, \theta_{n-1})$ has the property that each of its $(n-1)$-supporting lines is an $\alpha$-pair. Our aim is to show that the $(n-1)$-pivot line is also an $\alpha$-pair. This is achieved by exhibiting a systematic way of gluing various cubes together to produce a cube in $\Delta(\theta_0, \dots, \theta_{n-1})$ that totally supports the $(n-1)$-pivot line of $h$. Such a procedure is developed in the following sequence of claims.

\begin{claim}\label{claim:glueingsupports1}
Let $\gamma \in \Delta(\theta_0, \dots, \theta_{n-1})$ be such that every $(n-1)$-supporting line of $\faces_{n-2}^0(\gamma)$ is an $\alpha$-pair. Let $i \in n-1$. There exists $z_i(\gamma) \in \Delta(\theta_0, \dots, \theta_{n-1})$ with the following properties:

\begin{enumerate}
\item 
$\cut_{\{ i, \dots, n-2\}} (z_i(\gamma))_{\textbf{0}} 
= \cut_{\{i, \dots, n-2\}}( \gamma)_\textbf{0} ,
$ and
\item $ \cut_{\{i, \dots, n-2 \}}(z_i(\gamma))_f =  \cut_{\{i, \dots, n-2 \}}(\refl_{n-1}^0(\gamma))_\textbf{0}$, for every $f \in 2^{ \{ i, \dots, n-2 \}}$ such that $f \neq \textbf{0}$.
\end{enumerate}
\end{claim}

\begin{claimproof}
We proceed by induction on $i \in n-1$. To establish the basis of the induction, let $i=0$. Notice that $\cut_{\{i, \dots, n-2 \}}(\gamma)_\textbf{0} = \cut_{\{0, \dots, n-2\}}(\gamma)_\textbf{0} = \lines_{n-1}(\gamma)_\textbf{0}$ (we called this the $(n-1)$-antipivot line of $\gamma$.) Suppose that it is the pair $\langle a,b \rangle$. The $(n-1)$-antipivot line of $\gamma$ is also the $(n-1)$-antipivot line of $\faces_{n-2}^0(\gamma)$, so $\langle a, b \rangle \in \alpha$ by assumption. Therefore, $\com_n(a,b) \in \Delta(\theta_0, \dots, \theta_{n-1})$. Now set
\[
z_0(\gamma) = \sym_0(\sym_1(\dots \sym_{n-2}(\com_n(a,b) ) \dots)).
\]
It is easy to see that $\langle a, b \rangle $ is the $(n-1)$-antipivot line of $z_0(\gamma)$, and that every other cross-section line of $z_0(\gamma)$ is the constant pair $\langle a,a \rangle$, so we have established that \emph{(1)} and \emph{(2)} of the claim hold for $i=0$. 

For the inductive step, suppose that the claim holds for $ i \in n-1$ with $ i \neq n-2$. Notice that $\sym_i(\gamma)$ also satisfies the assumptions of the claim, so the claim holds for both $z_i(\gamma)$ and $z_i(\sym_i(\gamma))$. Set $\alpha =z_i(\gamma) $ and $\beta = \sym_i(z_i(\sym_i(\gamma)))$. Let $\textbf{i}=(1,0, \dots, 0) \in 2^{\{i, \dots, n-1\}}$. Items \emph{(1)} and \emph{(2)} of the claim for $\sym_i(\gamma)$ translate into the following statements about $\beta$:

\begin{enumerate}
\item[\emph{(1)$^\beta$}]
$\cut_{\{ i, \dots, n-2\}} (\beta)_{\textbf{i}} 
= \cut_{\{i, \dots, n-2\}}( \gamma)_\textbf{i} 
,$ and 
\item[\emph{(2)$^\beta$}] $ \cut_{\{i, \dots, n-2 \}}(\beta)_f =  \cut_{\{i, \dots, n-2 \}}(\refl_{n-1}^0(\gamma))_\textbf{i}$, for every $f \in 2^{ \{ i, \dots, n-2 \}}$ such that $f \neq \textbf{i}$.
\end{enumerate}

Define
$
z_{i+1}(\gamma) = \glue_{\{i\}}\left( \langle \faces_i^0(\alpha), \faces_i^1(\beta) \rangle
\right).
$ We show that $z_{i+1}(\gamma) \in \Delta(\theta_0, \dots, \theta_{n-1})$, and that \emph{(1)} and \emph{(2)} of the claim hold. Set 
\[ \epsilon =
\glue_{\{i\}}(\langle \faces_i^1(\alpha), \faces_i^0(\beta) \rangle ) 
\]
Let $f \in 2^{\{i+1, \dots, n-2\}}$. Set $_0f = \{(i,0)\} \union f $ and $_1f = \{(i,1) \} \union f$.   We compute
\begin{align*}
\cut_{\{i+1, \dots, n-2\}}(\epsilon)_f &= \cut_{\{i+1, \dots, n-2\}}
\left(\glue_{\{i\}}(\langle \faces_i^1(\alpha), \faces_i^0(\beta) \rangle ) 
\right)_f\\
&= \glue_{\{i\}} ( \cut_{\{i, \dots, n-2\}}(\alpha)_{_1f}, \cut_{\{i, \dots, n-2\}}(\beta)_{_0f})\\
&= \glue_{\{i \}} ( \cut_{\{i, \dots, n-2\}}(\refl_{n-1}^0(\gamma))_\textbf{0}, \cut_{\{i, \dots, n-2\}}(\refl_{n-1}^0(\gamma))_\textbf{i})\\
&= \cut_{\{i+1, \dots, n-2\}}(\refl_{n-1}^0(\gamma))_\textbf{0}\\
&= \refl_{n-1}^0(\cut_{\{i+1, \dots, n-2\}}(\gamma)_\textbf{0}),
\end{align*}
where the equality between the second and third lines follows from \emph{(2)} and \emph{(2)}$^\beta$. The above computation shows that 
\[
\cut_{\{i+1, \dots, n-1\}}(\epsilon)_g = \cut_{\{i+1, \dots, n-1\}}(\gamma)_\textbf{0}
\]
for every $g \in 2^{\{i+2, \dots, n-1\}}$, so we can apply Lemma \ref{lem:hyperreflexive} to conclude that $\epsilon \in \Delta(\theta_0, \dots, \theta_{n-1})$. Because $\Delta(\theta_0, \dots, \theta_{n-1})$ is $(n)$-transitive, this shows that $z_{i+1}(\gamma) \in \Delta(\theta_0, \dots, \theta_{n-1})$. 

Now we verify that \emph{(1)} holds for $z_{i+1}(\gamma)$.  We compute
\begin{align*}
\cut_{\{i+1, \dots, n-2\}}(\epsilon)_\textbf{0}
&= \cut_{\{i+1, \dots, n-2\}}
\left(\glue_{\{i\}}(\langle \faces_i^0(\alpha), \faces_i^1(\beta) \rangle ) 
\right)_\textbf{0}\\
&= \glue_{\{i\}} ( \cut_{\{i, \dots, n-2\}}(\alpha)_{\textbf{0}}, \cut_{\{i, \dots, n-2\}}(\beta)_{\textbf{i}})\\
&= \glue_{\{i \}} ( \cut_{\{i, \dots, n-2\}}(\gamma)_\textbf{0}, \cut_{\{i, \dots, n-2\}}(\gamma)_\textbf{i})\\
&= \cut_{\{i+1, \dots, n-2\}}(\gamma)_\textbf{0},
\end{align*}
where the equality between the second and third lines follows from \emph{(1)} and \emph{(2)}$^\beta$. A similar computation shows that \emph{(2)} holds for $z_{i+1}$, which completes the inductive step and the proof. 
\end{claimproof}

\begin{claim}\label{claim:gluingsupports2}
If $\gamma \in \Delta(\theta_0, \dots, \theta_{n-1})$ is such that each of its $(n-1)$-supporting lines is an $\alpha$-pair, then $\glue_{\{j\}}(\langle \faces_j^0(\refl_{n-1}^0(\gamma)), \faces_j^0(\gamma) \rangle) \in \Delta(\theta_0, \dots, \theta_{n-1})$, for every $j \in n-1$.
\end{claim}

\begin{claimproof}
We first show that the claim holds when $j=n-2$. We apply Claim \ref{claim:glueingsupports1} with $i=n-2$ and get
\[
z_{n-2}(\gamma)= \glue_{\{n-2\}}(\langle  \faces_{n-2}^0(\gamma), \faces_{n-2}^0(\refl_{n-1}^0(\gamma)) \rangle) \in \Delta(\theta_0, \dots, \theta_{n-1}).
\]
Because $\Delta(\theta_0, \dots, \theta_{n-1})$ is $n$-symmetric, the claim holds in this case. More generally, we observe that the proof of Claim \ref{claim:glueingsupports1} does not depend in any special way on the value $j=n-2$. Indeed, switching the coordinates $n-2$ and $j$, applying the same argument, and then switching the coordinates again will produce an argument that works for any value of $j \in n-1$. 
\end{claimproof}

\begin{claim}\label{claim:glueingsupports3}
Let $\gamma \in \Delta(\theta_0, \dots, \theta_{n-1})$ be such that each of its $(n-1)$-supporting lines is an $\alpha$-pair. Additionally, suppose $k\in n-1$ is such that, for all $l \in k$, every $(n-1)$-supporting line of $\faces_l^0(\gamma)$ is a constant pair. In this situation, there is $\gamma' \in \Delta(\theta_0, \dots, \theta_{n-1})$ such that
\begin{enumerate}
\item the $(n-1)$-pivot lines of $\gamma'$ and $\gamma$ are equal, and
\item every $(n-1)$-supporting line of $\faces_l^0(\gamma')$ is a constant pair, for all $l \in k+1$. 

\end{enumerate}
\end{claim}

\begin{claimproof}

Set 
$
\gamma' = \glue_{\{k\}}(\langle \faces_k^0(\refl_{n-1}^0(\gamma)), \faces_k^1(\gamma) \rangle).
$
Claim \ref{claim:gluingsupports2} with $j = k$ ensures that $\glue_{\{k\}}(\langle \faces_k^0(\refl_{n-1}^0(\gamma)), \faces_k^0(\gamma) \rangle) \in \Delta(\theta_0, \dots, \theta_{n-1})$, from which it follows that $\gamma' \in \Delta(\theta_0, \dots, \theta_{n-1})$. It is clear that \emph{(1)} holds, because $\faces_k^1(\gamma)= \faces_k^1(\gamma')$. 

To check \emph{(2)}, let $f \in 2^{n-1}$ be such that $f(l) =0$ for some $l \in k+1$. If $l \in k$, then $\lines_{n-1}(\gamma')_f = \lines_{n-1}(\gamma)_f$, and is a constant pair by assumption. If $l =k $, then $\lines_{n-1}(\gamma')_f$ must be an $(n-1)$-cross-section line of $\faces_k^0(\gamma') = \faces_k^0(\refl_{n-1}^0(\gamma))$, and is therefore also a constant pair. 
\end{claimproof}

Finally, let $\gamma \in \Delta(\theta_0, \dots, \theta_{n-1})$ be such that every $(n-1)$-supporting line is an $\alpha$-pair. Claim \ref{claim:glueingsupports3} provides a recursive procedure to replace each $(n-1)$-supporting line of $\gamma$ with a constant pair, starting with those $(n-1)$-cross-section lines that belong to $\faces_0^0(\gamma)$, and ending with those that belong to $\faces_{n-2}^0(\gamma)$. This demonstrates that the $(n-1)$-pivot line of $\gamma$ is $(n-1)$-supported by some $\zeta \in \Delta(\theta_0, \dots, \theta_{n-1})$, and the proof is finished. 

\end{proof}

\begin{cor}
Let $\A$ be an algebra, $n \geq 2$, and take $(\theta_0, \dots, \theta_{n-1}) \in \Con(\A)^n$. The hypercommutator is independent of the order of its arguments, i.e.\  
\[
[\theta_0, \dots, \theta_{n-1}]_H = [\theta_{\sigma(0)}, \dots, \theta_{\sigma(n-1)}]_H,
\]
for all permutations $\sigma \in S_n$.
\end{cor}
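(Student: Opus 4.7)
My plan is to use the characterization given by Theorem \ref{thm:charhypercomm}, specifically the equivalence between (1) and (2): $\langle x, y \rangle \in [\theta_0, \dots, \theta_{n-1}]_H$ if and only if $\com_n(x,y) \in \Delta(\theta_0, \dots, \theta_{n-1})$. Since the commutator cube $\com_n(x,y)$ is defined by the totally symmetric rule $\com_n(x,y)_f = x$ for $f \neq \textbf{1}$ and $\com_n(x,y)_\textbf{1} = y$, it is ``shape-indifferent'' to any relabeling of coordinates. This reduces the problem to showing that membership of $\com_n(x,y)$ in $\Delta(\theta_0, \dots, \theta_{n-1})$ is invariant under permutation of the congruence arguments.

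The key tool I will introduce is the coordinate-permuting map $\pi_\sigma : \A^{2^n} \to \A^{2^n}$ defined by $\pi_\sigma(\gamma)_f = \gamma_{f \circ \sigma}$ for $f \in 2^n$ and $\sigma \in S_n$. Because the basic operations of $\A^{2^n}$ act coordinatewise, $\pi_\sigma$ is an automorphism of $\A^{2^n}$; moreover, it permutes the face directions of the $(n)$-dimensional cube structure according to $\sigma$, so it sends $(n)$-dimensional congruences to $(n)$-dimensional congruences. A direct check against the definition of $\cube_i$ yields $\pi_\sigma(\cube_i(\theta_i)) = \cube_{\sigma(i)}(\theta_i)$, since $(f \circ \sigma)(i) = f(\sigma(i))$. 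Applying $\pi_\sigma$ to the generators of $\Delta(\theta_0, \dots, \theta_{n-1})$ and reindexing with $j = \sigma(i)$ gives
\[
\pi_\sigma(\Delta(\theta_0, \dots, \theta_{n-1})) = \Delta(\theta_{\sigma^{-1}(0)}, \dots, \theta_{\sigma^{-1}(n-1)}).
\]

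Finally, I will observe that $\pi_\sigma(\com_n(x,y)) = \com_n(x,y)$: indeed, $f \circ \sigma = \textbf{1}$ iff $f = \textbf{1}$, so $\pi_\sigma$ fixes $\com_n(x,y)$ pointwise in its coordinate labels. Combining everything,
\[
\com_n(x,y) \in \Delta(\theta_0, \dots, \theta_{n-1}) \iff \com_n(x,y) \in \Delta(\theta_{\sigma^{-1}(0)}, \dots, \theta_{\sigma^{-1}(n-1)}),
\]
and letting $\tau = \sigma^{-1}$ range over all of $S_n$ gives the desired invariance via Theorem \ref{thm:charhypercomm}. I do not foresee any genuine obstacle; the work is carried entirely by Theorem \ref{thm:charhypercomm}, with the corollary amounting to the observation that a totally symmetric generator cannot distinguish between orderings of the coordinates.
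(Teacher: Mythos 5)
Your proposal is correct and is essentially the paper's own argument: the paper also deduces the corollary from the equivalence of \emph{(1)} and \emph{(2)} in Theorem \ref{thm:charhypercomm}, leaving implicit the coordinate-permutation details (that $\pi_\sigma$ sends $\Delta(\theta_0,\dots,\theta_{n-1})$ to $\Delta$ of the permuted tuple while fixing $\com_n(x,y)$) that you spell out. Your computations $\pi_\sigma(\cube_i(\theta_i))=\cube_{\sigma(i)}(\theta_i)$ and $\pi_\sigma(\com_n(x,y))=\com_n(x,y)$ are exactly right, so this is a faithful, fully detailed version of the intended proof.
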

\begin{proof}
This is follows immediately from the equivalence of \emph{(1)} and \emph{(2)} in Theorem \ref{thm:charhypercomm}.
\end{proof}

With Theorem \ref{thm:charhypercomm} in hand, we are now able to prove that the hypercommutator satisfies what we call HHC8, which is the condition that 

\[
[\theta_0, \dots, \theta_{m-1},[\theta_{m}, \dots, \theta_{n-1}]_H]_H \leq [\theta_0, \dots, \theta_{n-1}]_H ,
\]
for any algebra $\A$, $n \geq 3$, $m \in n-1$, and $(\theta_0, \dots, \theta_{n-1}) \in \Con(\A)^n$. To prove it, we will demonstrate that $\Delta(\theta_0, \dots, \theta_{m-1}, [\theta_m, \dots, \theta_{n-1}]_H)$ is a subset of the projection of a special subalgebra of $\Delta(\theta_0, \dots, \theta_{n-1})$ onto a coordinate system with fewer dimensions. This construction will produce an $n$-dimensional commutator cube for any pair of elements belonging to the congruence defined by the nested expression on the left hand side of the HHC8 inequality. 

Let $\A$ be an algebra, $n\geq 3$, $m \in n-1$, and $(\theta_0, \dots, \theta_{n-1}) \in \Con(\A)$. We define the \textbf{$m$-nest} of $\Delta(\theta_0, \dots, \theta_{n-1})$ as
\begin{align*}
N(\theta_0, \dots, \theta_{n-1}) = \{ \gamma \in \Delta(\theta_0, \dots, \theta_{n-1}): \cut_{m}&(\gamma)_f \text{ is an $(n-m)$-dimensional}\\
&\text{ commutator cube for all $f \in 2^{m}$} 
\}.
\end{align*}
Equivalently, 
\begin{align*}
N(\theta_0, \dots, \theta_{n-1}) = \{ \gamma \in\Delta(\theta_0, \dots, \theta_{n-1}):& \cut_{\{m, \dots, n-1\}}(\gamma) \text{ is an $(n-m)$}\\
&\text{-dimensional commutator cube }\\
&\text{with vertices labeled by elements of $A^{2^{m}}$} 
\}.
\end{align*}
The $m$-nest for $n=3$ and $m=1$ was used in the proof of Theorem \ref{thm:bthhc8}. We provide a picture of a typical $2$-nest element $\gamma$ when $n=4$ in Figure \ref{fig:2nest}, where $\faces_3^1(\gamma)$ is the `inner' cube, and $\cut_{\{0,1\}}(\gamma)_\textbf{1}$ is outlined in bold. It is clear from the definition that $N(\theta_0, \dots, \theta_{n-1}) \leq \Delta(\theta_0, \dots, \theta_{n-1})$. The next lemma establishes an important property of the $m$-nest.

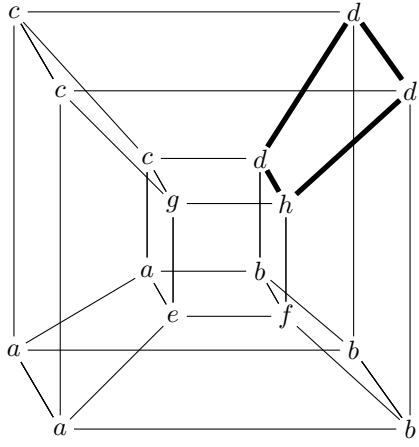
\begin{figure}[bt]
 \centering
 \scalebox{1}
 {
 \begin{tikzpicture}[scale=1.5]
	 \tikzstyle{vertex}=[circle,minimum size=10pt,inner sep=0pt]
	 \tikzstyle{selected vertex} = [vertex, fill=red!24]
	 \tikzstyle{selected edge} = [draw,line width=5pt,-,red!50]
	 \tikzstyle{edge} = [draw,-,black]
	 \node[vertex] (v0) at (0,0) {$e$};
	 \node[vertex] (v1) at (0,1) {$g$};
	 \node[vertex] (v2) at (1,0) {$f$};
	 \node[vertex] (v3) at (1,1) {$h$};
	 
	 \node[vertex] (v4) at (-0.23, 0.4) {$a$};
	 \node[vertex] (v5) at (-0.23,1.4) {$c$};
	 \node[vertex] (v6) at (.77,0.4) {$b$};
	 \node[vertex] (v7) at (.77,1.4) {$d$};
	 
	 \node[vertex] (v8) at (-1,-1) {$a$};
	 \node[vertex] (v9) at (-1,2) {$c$};
	 \node[vertex] (v10) at (2.1,-1) {$b$};
	 \node[vertex] (v11) at (2.1,2) {$d$};
	  
	 \node[vertex] (v13) at (-1.41,2.7) {$c$};
	 \node[vertex] (v12) at (-1.41,-0.3) {$a$};
	 \node[vertex] (v14) at (1.60,-0.3) {$b$};
	 \node[vertex] (v15) at (1.60,2.7) {$d$};
	
	 \draw[edge] (v0) -- (v1) -- (v3) -- (v2) -- (v0);
	 \draw[edge] (v0) -- (v4) -- (v5) -- (v1) -- (v0);
	 \draw[edge] (v2) -- (v6) -- (v7) -- (v3) -- (v2);
	 \draw[edge] (v4) -- (v6) -- (v7) -- (v5) -- (v4);
	 \draw[edge] (v8) -- (v9) -- (v13) -- (v12) -- (v8);
	 \draw[edge] (v0) -- (v4) -- (v12) -- (v8) -- (v0);
	 \draw[edge] (v1) -- (v9) -- (v13) -- (v5) -- (v1);
	 \draw[edge] (v2) -- (v10) -- (v14) -- (v6) -- (v2);
	 \draw[edge] (v8) -- (v10) -- (v14) -- (v12) -- (v8);
	 \draw[edge] (v3) -- (v11) -- (v15) -- (v7) -- (v3);
	 \draw[edge] (v10) -- (v11) -- (v15) -- (v14) -- (v10);
	 \draw[edge] (v9) -- (v11) -- (v15) -- (v13) -- (v9);
	 \draw[edge, line width=2pt] (v7) -- (v15) -- (v11) -- (v3) -- (v7);
	 
 \end{tikzpicture}
 }
 \caption{Typical $2$-nest element $\gamma$ when $n=4$.}\label{fig:2nest}
 \end{figure}

\begin{lem}\label{lem:nest1}
Let $\A$ be an algebra, $n\geq 3$, $m \in n-1$, and $(\theta_0, \dots, \theta_{n-1}) \in \Con(\A)$. Then,
$
\Delta(\theta_0, \dots, \theta_{m-1}, [ \theta_m, \dots, \theta_{n-1}]_H) \leq 
\cut_{\{m+1, \dots, n-1\}}(N(\theta_0, \dots, \theta_{n-1}))_\textbf{1}.
$
\end{lem}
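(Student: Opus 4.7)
The plan is to show that $R := \cut_{\{m+1, \dots, n-1\}}(N(\theta_0, \dots, \theta_{n-1}))_{\textbf{1}}$ is an $\A$-admissible subalgebra of $\A^{2^{\{0, \dots, m\}}}$, is $(\{0, \dots, m\})$-transitive, and contains the generators of $M(\theta_0, \dots, \theta_{m-1}, [\theta_m, \dots, \theta_{n-1}]_H)$. Since by Lemma \ref{lem:basicdelta1} the relation $M$ is exactly the subalgebra generated by these generators, and by Lemma \ref{lem:basicdelta2} $\Delta$ equals $\tc(M)$, these three properties force $\Delta(\theta_0, \dots, \theta_{m-1}, [\theta_m, \dots, \theta_{n-1}]_H) \leq R$.

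I would verify generator containment first. For $i \in \{0, \dots, m-1\}$ and $\langle x,y \rangle \in \theta_i$, the cube $\gamma := \cube_i(x,y)$ belongs to $M(\theta_0, \dots, \theta_{n-1}) \leq \Delta(\theta_0, \dots, \theta_{n-1})$; each face $\cut_{\{0, \dots, m-1\}}(\gamma)_f$ is the constant $(n-m)$-cube, a degenerate commutator cube, so $\gamma \in N$, and directly $\cut_{\{m+1, \dots, n-1\}}(\gamma)_{\textbf{1}} = \cube_i(x,y)$ as an $(m+1)$-cube. For $\langle x,y \rangle \in [\theta_m, \dots, \theta_{n-1}]_H$, Theorem \ref{thm:charhypercomm} yields $\com_{\{m, \dots, n-1\}}(x,y) \in \Delta(\theta_m, \dots, \theta_{n-1})$; Lemma \ref{lem:deltaprojectstodelta}(2) produces some $\gamma_0 \in \Delta(\theta_0, \dots, \theta_{n-1})$ projecting to this commutator cube, and Corollary \ref{lem:hyperreflexive} lifts to $\gamma \in \Delta(\theta_0, \dots, \theta_{n-1})$ with every face $\cut_{\{0, \dots, m-1\}}(\gamma)_f = \com_{\{m, \dots, n-1\}}(x,y)$, whence $\gamma \in N$ and $\cut_{\{m+1, \dots, n-1\}}(\gamma)_{\textbf{1}} = \cube_m(x,y)$. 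For $\A$-admissibility, applying any basic operation $t$ coordinate-wise to elements $\gamma_1, \dots, \gamma_k \in N$ yields a cube each of whose $\cut_{\{0, \dots, m-1\}}$-faces is the $t$-combination of commutator cubes in $A$, hence is itself a commutator cube; so $N$ is a subuniverse of $\A^{2^n}$, and $R$ is its image under the homomorphism $\gamma \mapsto \cut_{\{m+1, \dots, n-1\}}(\gamma)_{\textbf{1}}$.

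The main obstacle is establishing $(\{0, \dots, m\})$-transitivity of $R$. For $i < m$, gluing in direction $i$ commutes with $\cut_{\{m+1, \dots, n-1\}}(\cdot)_{\textbf{1}}$ and preserves $N$-membership (the gluing acts faceways within each commutator-cube structure), so this case reduces immediately to the $(n)$-transitivity of $\Delta(\theta_0, \dots, \theta_{n-1})$. The hard case is $i = m$: given $\gamma_1, \gamma_2 \in N$ with $\cut_{\{m, \dots, n-1\}}(\gamma_j) = \com_{\{m, \dots, n-1\}}(\zeta_j, \xi_j)$ and matching condition $\xi_1 = \zeta_2$, I must produce $\gamma^* \in N$ with $\cut_{\{m, \dots, n-1\}}(\gamma^*) = \com_{\{m, \dots, n-1\}}(\zeta_1, \xi_2)$. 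Since the $m$-faces of $\gamma_1$ and $\gamma_2$ do not themselves match in the required form, a direct gluing fails; I would instead mimic the construction from the proof of Theorem \ref{thm:bthhc8}. In the ternary case, a bridging cube $A \in \Delta(\theta_0, \theta_1, \theta_2)$ is built (via Lemma \ref{lem:deltaprojectstodelta} and Corollary \ref{lem:hyperreflexive}) whose $m$-faces both equal the common matching face of $\gamma_1, \gamma_2$ and whose remaining faces coincide with appropriate faces of the $\gamma_j$'s; two applications of $(3)$-transitivity then combine $A$, $\gamma_1$, and $\gamma_2$ into the desired $\gamma^*$. For general $n$ and $m$, I would recursively construct such bridging cubes, using Lemma \ref{lem:deltaprojectstodelta} and Corollary \ref{lem:hyperreflexive} to produce intermediaries with prescribed face-structures, and combine them through a chain of $(n)$-transitivity applications in $\Delta(\theta_0, \dots, \theta_{n-1})$. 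The combinatorial bookkeeping of face-matchings, which grows with $n - m$, is the technical heart of the argument.
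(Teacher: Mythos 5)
Your overall architecture matches the paper's: show that $R := \cut_{\{m+1,\dots,n-1\}}(N(\theta_0,\dots,\theta_{n-1}))_{\textbf{1}}$ contains the generators of $M(\theta_0,\dots,\theta_{m-1},[\theta_m,\dots,\theta_{n-1}]_H)$ and is an $(m+1)$-dimensional congruence, then conclude via $\Delta = \tc(M) \leq \tc(R) = R$. Your generator-containment argument (constant cubes for $i<m$; Theorem \ref{thm:charhypercomm} plus Lemma \ref{lem:deltaprojectstodelta} plus Corollary \ref{lem:hyperreflexive} for the $m$-th coordinate) is essentially the paper's. Two points, one minor and one substantive. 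Minor: for transitivity in direction $i<m$, the matching hypothesis only equates the projections $\faces_i^1(\cut_{\{m+1,\dots,n-1\}}(\lambda)_{\textbf{1}})$ and $\faces_i^0(\cut_{\{m+1,\dots,n-1\}}(\nu)_{\textbf{1}})$, not the full $(n-1)$-dimensional faces of $\lambda$ and $\nu$; one must use the rigidity of commutator cubes (every non-pivot label equals the antipivot label) to upgrade this to $\faces_i^1(\lambda)=\faces_i^0(\nu)$ before gluing. This is the computation the paper carries out case-by-case on $(f_m,\dots,f_{n-1})$, and your phrase ``reduces immediately'' elides it, though the idea is right.

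The substantive gap is the $i=m$ case, which you correctly identify as the hard step but do not actually carry out: ``I would recursively construct such bridging cubes \dots and combine them through a chain of $(n)$-transitivity applications'' is a plan, not a proof, and the claimed combinatorial bookkeeping ``which grows with $n-m$'' is never exhibited. The paper closes this case with a single uniform appeal to a fact already established inside the proof of Theorem \ref{thm:charhypercomm}: the set of pairs totally supported by elements of a higher-dimensional congruence is a transitive relation. Applying this to $\cut_{\{m,\dots,n-1\}}(\Delta(\theta_0,\dots,\theta_{n-1}))$, viewed (via Lemma \ref{lem:higherconofhighercon}) as an $(n-m)$-dimensional congruence on $\Delta(\theta_0,\dots,\theta_{m-1})$, and noting that the matching hypothesis says exactly $b=c$ for the supported pairs $\langle a,b\rangle$ and $\langle c,d\rangle$, one gets $\com_{n\setminus m}(a,d)$ in the cut directly, and gluing it back lands in the nest. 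So no recursion in $n-m$ is needed; the difficulty you flag was already absorbed into Theorem \ref{thm:charhypercomm}. To repair your write-up, either cite that transitivity fact or genuinely reproduce its proof (symmetrize one commutator cube, glue along the common constant face, then restore total support via the $(2)\Leftrightarrow(3)$ equivalence of Theorem \ref{thm:charhypercomm}); as it stands the central step is missing.
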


\begin{proof}
Set $\alpha = [\theta_m, \dots, \theta_{n-1}]_H$. The lemma is a consequence of the following two facts:
\begin{enumerate}
\item $M(\theta_0, \dots, \theta_{m-1}, \alpha) \leq 
\cut_{\{m+1, \dots, n-1\}}(N(\theta_0, \dots, \theta_{n-1}))_\textbf{1}$, and
\item $\cut_{\{m+1, \dots, n-1\}}(N(\theta_0, \dots, \theta_{n-1}))_\textbf{1}$ is an $(m+1)$-dimensional congruence. 
\end{enumerate}
Before we proceed we point out that, although the notation $\cube_i(x,y)$ does not specify a dimension, the dimension should be discernible from the dimension of the relation to which we assert it belongs. Recall that 
\begin{align*}
M(\theta_0, \dots, \theta_{m-1}, \alpha) = \Sg_{\A^{2^{m+1}}}
\bigg(
&\Union_{i \in m}
\left\{ \cube_i(x,y): \langle x,y \rangle \in \theta_i
\right\}\\
&\union 
\{\cube_m(x,y): \langle x,y \rangle \in \alpha \}
\bigg).
\end{align*}
To establish (1) it is enough to show that these generators belong to \newline $\cut_{\{m+1, \dots, n-1\}}(N(\theta_0, \dots, \theta_{n-1}))_\textbf{1}$. There are two cases to address, the first dealing with $i \in m$, and the second dealing with the last coordinate $m$. 

For the first case, let $i \in m$ and take $\gamma = \cube_i(a,b) \in A^{2^n}$ for $\langle a, b  \rangle \in \theta_i$. Now, $\gamma$ is a generator of $\Delta(\theta_0, \dots, \theta_{n-1})$ and by definition we have that 
$
\cut_{\{0, \dots, m-1\}}(\gamma)_f 
$
is a constant cube with value either $a$ or $b$, depending on whether $f(i) = 0$ or $f(i)=1$. Therefore, $\gamma \in N(\theta_0, \dots, \theta_{n-1})$. On the other hand, observe that 
\[
\cut_{\{m+1, \dots, n-1\}}(\gamma)_\textbf{1} = \cube_i(a,b) \in A^{2^{m+1}}.
\]
This shows that 
$
\{\cube_i(x,y): \langle x,y \rangle \in \theta_i \} \subseteq \cut_{\{m+1, \dots, n-1\}}(N(\theta_0, \dots, \theta_{n-1}))_\textbf{1}.
$

Now we deal with the second case. Now, Lemma \ref{lem:deltaprojectstodelta} indicates that
\[
\Delta(\theta_m, \dots, \theta_{n-1}) = \cut_{\{0, \dots, m-1\}}(\Delta(\theta_0, \dots, \theta_{m-1}))_\textbf{1},
\]
Suppose $\langle a, b \rangle \in \alpha$. It follows from Theorem \ref{thm:charhypercomm} that $\com_{n \setminus m}(a,b) \in \Delta(\theta_m, \dots, \theta_{n-1})$. Therefore, there is $\epsilon \in \Delta(\theta_0, \dots, \theta_{n-1})$ so that 
\[
\cut_{\{0, \dots, m-1\}}(\epsilon)_\textbf{1}= \com_{n\setminus m}(a,b)
\]
We apply Corollary \ref{lem:hyperreflexive} to this situation and conclude that there is $\mu \in \Delta(\theta_0, \dots, \theta_{n-1})$ so that 
$
\cut_{\{0, \dots, m-1\}}(\mu)_f = \com_{n \setminus m}(a,b)
$
for all $f \in 2^m$. It is immediate that $\mu \in N(\theta_0, \dots, \theta_{n-1})$ . We now establish that 
$
\cut_{\{m+1, \dots, n-1\}}(\mu)_\textbf{1} = \cube_m(a,b),
$
which will finish the proof of (1). 

Indeed, for $\textbf{1} \in 2^{\{m+1, \dots, n-1\}} $ and $g\in 2^{\{0, \dots, m\}}$, we compute
\begin{align*}
\left(\cut_{\{m+1, \dots, n-1\}}\mu)_\textbf{1}\right)_{g} &= 
\mu_{g \union \textbf{1}} \\
&= \mu_{(g_0, \dots, g_{m-1}, g_m, 1, \dots, 1)}
= \begin{cases} a & \text{if $g_m = 0$}\\
				b & \text{if $g_m = 1$},
\end{cases}
\end{align*}
where the case distinction follows from the fact that the first $m$ arguments of $g$ provide coordinates for a vertex label of $\cut_{\{0, \dots, m-1\}}(\mu)$.

Now we establish (2). Set $R = \cut_{\{m+1, \dots, n-1\}}(N(\theta_0, \dots, \theta_{n-1}))_\textbf{1}$. It is immediate that $R$ is compatible and easy to see that $R$ is $(m+1)$-reflexive. Let us show that $R$ is $(m+1)$-transitive. Take $i \in m+1$ and $\lambda, \nu \in N(\theta_0, \dots, \theta_{n-1})$ satisfying
\[
\faces_i^1(\cut_{\{m+1, \dots, n-1\}}(\lambda)_\textbf{1}) 
=
\faces_i^0(\cut_{\{m+1, \dots, n-1\}}(\nu)_\textbf{1}).
\]
We want to show that 
\[
\tau = \glue_{\{i\}}(\langle \faces_i^0(\cut_{\{m+1, \dots, n-1\}}(\lambda)_\textbf{1}) 
,
\faces_i^1(\cut_{\{m+1, \dots, n-1\}}(\nu)_\textbf{1})
\in R.
\]

Either $i\in m$ or $i=m$. The first case is easier to understand, because in this situation $\faces_i^1(\lambda) = \faces_1^0(\nu)$. Indeed, take $f = (f_0, \dots, f_{i-1}, f_{i+1}, \dots, f_{n-1}) \in 2^{n \setminus \{i\}}$. Notice that
\begin{align*}
(\faces_i^1(\lambda))_f &= \lambda_{(f_0, \dots, f_{i-1}, 1, f_{i+1}, \dots, f_{n-1}) }\\
&= (\cut_m(\lambda)_{(f_0, \dots, f_{i-1}, 1, f_{i+1}, \dots, f_{m-1})})_{(f_m, f_{m+1}, \dots, f_{n-1})}.
\end{align*}
We assume that $\lambda \in N(\theta_0, \dots, \theta_{n-1})$, so $\cut_m(\lambda)_{(f_0, \dots, f_{i-1}, 1, f_{i+1}, \dots, f_{m-1})}$ is an $(n-m)$-dimensional commutator cube. If $f_m = 0$ then we finish the computation as follows:
\begin{align*}
(\faces_i^1(\lambda))_f
&= (\cut_m(\lambda)_{(f_0, \dots, f_{i-1}, 1, f_{i+1}, \dots, f_{m-1})})_{(0, f_{m+1}, \dots, f_{n-1})} \\
&= (\cut_m(\lambda)_{(f_0, \dots, f_{i-1}, 1, f_{i+1}, \dots, f_{m-1})})_{(0, 1, \dots, 1)} \\
&= (\cut_m(\nu)_{(f_0, \dots, f_{i-1}, 0, f_{i+1}, \dots, f_{m-1})})_{(0, 1, \dots, 1)} \\
&= (\cut_m(\nu)_{(f_0, \dots, f_{i-1}, 0, f_{i+1}, \dots, f_{m-1})})_{(0, f_{m+1}, \dots, f_{n-1})} \\
&= \faces_i^1(\nu)_f
\end{align*}
If $f_m=1$ but $(f_m, \dots, f_{n-1}) \neq \textbf{1}$, the computation is completed as follows:

\begin{align*}
(\faces_i^1(\lambda))_f
&= (\cut_m(\lambda)_{(f_0, \dots, f_{i-1}, 1, f_{i+1}, \dots, f_{m-1})})_{(1, f_{m+1}, \dots, f_{n-1})} \\
&= (\cut_m(\lambda)_{(f_0, \dots, f_{i-1}, 1, f_{i+1}, \dots, f_{m-1})})_{(0, 1, \dots, 1)} \\
&= (\cut_m(\nu)_{(f_0, \dots, f_{i-1}, 0, f_{i+1}, \dots, f_{m-1})})_{(0, 1, \dots, 1)} \\
&= (\cut_m(\nu)_{(f_0, \dots, f_{i-1}, 0, f_{i+1}, \dots, f_{m-1})})_{(1, f_{m+1}, \dots, f_{n-1})} \\
&= \faces_i^1(\nu)_f
\end{align*}
The case when $(f_m, \dots, f_{n-1}) = \textbf{1}$ is handled similarly. We conclude that $\zeta = \glue_{\{i\}}(\langle\faces_i^0, \faces_i^1 \rangle ) \in N(\theta_0, \dots, \theta_{n-1})$. It is easy to see that 
\[\cut_{\{m+1, \dots, n-1 \}}(\zeta)_\textbf{1} = \tau,
\]
so the case when $i \in m$ is finished. 

Suppose now that $i = m$. Notice that $\cut_{\{m, \dots, n-1\}}(N(\theta_0, \dots, \theta_{n-1}))$ is a collection of $(n-m)$-dimensional commutator cubes whose vertices are labeled by elements of $\Delta(\theta_0, \dots, \theta_{m-1})$. That is,  
\begin{align*}
\cut_{\{m, \dots, n-1\}}(\lambda) &= \com_{n\setminus m}
(a,b), \text{ and}\\
\cut_{\{m, \dots, n-1\}}(\nu) &= \com_{n\setminus m}
(c,d),
\end{align*}
where 
\begin{align*}
a &= \cut_{\{m, \dots, n-1\}}(\lambda)_{(0, 1, \dots, 1)},\\
b &= \cut_{\{m, \dots, n-1\}}(\lambda)_{(1, 1, \dots, 1)},\\
c &= \cut_{\{m, \dots, n-1\}}(\nu)_{(0, 1, \dots, 1)}, \text{ and}\\
d &= \cut_{\{m, \dots, n-1\}}(\nu)_{(1, 1, \dots, 1)}.
\end{align*}
The assumption that 
$\faces_m^1(\cut_{\{m+1, \dots, n-1\}}(\lambda)_\textbf{1})
= 
\faces_m^0(\cut_{\{m+1, \dots, n-1\}}(\nu)_\textbf{1})
$
exactly means that $b=c$. A consequence of Lemma \ref{lem:higherconofhighercon} is that 
\[
\cut_{\{m, \dots, n-1\}}(\Delta(\theta_0, \dots, \theta_{n-1}))
\in \Con_{n \setminus m}(\Delta(\theta_0, \dots, \theta_{m-1})).
\] 

We demonstrated in the proof of Theorem \ref{thm:charhypercomm} that the collection of pairs $\langle x,y \rangle $ that are totally supported by some higher dimensional congruence is a transitive relation. In the current situation this means that 
\[
\com_{n\setminus m}(a,d) \in \cut_{\{m, \dots, n-1\}}(\Delta(\theta_0, \dots, \theta_{n-1})).
\]
Set $ \rho = \glue_{\{m, \dots, n-1\}}(\com_{n\setminus m}(a,d) )$.
Evidently $\rho \in N(\theta_0, \dots, \theta_{n-1})$, and a routine computation shows that $\cut_{\{m+1, \dots, n-1\}}(\rho)_\textbf{1}= \tau$. This finishes the proof that $R$ is $(m+1)$-transitive. A similar kind of argument shows that $R$ is $(m+1)$-symmetric. 

The lemma now follows. Indeed, having established $(1)$ and $(2)$, we are now able to conclude that 
\[
\Delta(\theta_0, \dots, \theta_{m-1}, \alpha) = \tc(M(\theta_0, \dots, \theta_{m-1}, \alpha) \leq \tc(R) = R,
\]
as desired.
\end{proof}

\begin{thm}[HHC8]\label{thm:HHC8}
If $\A$ is an algebra, $n \geq 3$, and $(\theta_0, \dots, \theta_{n-1}) \in \Con(\A)^n$, then
\[
[\theta_0, \dots, \theta_{m-1}, [\theta_m, \dots, \theta_{n-1}]_H]_H \leq [\theta_0, \dots, \theta_{n-1}]_H
\]
for all $m \in n-1$. 
\end{thm}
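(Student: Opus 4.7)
The plan is to combine Theorem \ref{thm:charhypercomm} (the commutator cube characterization of the hypercommutator) with Lemma \ref{lem:nest1} (which embeds $\Delta(\theta_0,\dots,\theta_{m-1},[\theta_m,\dots,\theta_{n-1}]_H)$ into a projection of the $m$-nest $N(\theta_0,\dots,\theta_{n-1})$). Modulo coordinate bookkeeping, these two ingredients do all the work: the theorem lets us trade the nested containment for a statement about a single commutator cube, while the lemma lifts that $(m+1)$-dimensional cube into a witness in the full $(n)$-dimensional congruence.

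In detail, let $\langle x,y\rangle$ lie in the left-hand side and set $\alpha=[\theta_m,\dots,\theta_{n-1}]_H$. By Theorem \ref{thm:charhypercomm} applied to the outer hypercommutator, $\com_{m+1}(x,y)\in\Delta(\theta_0,\dots,\theta_{m-1},\alpha)$. Lemma \ref{lem:nest1} then supplies some $\mu\in N(\theta_0,\dots,\theta_{n-1})\subseteq\Delta(\theta_0,\dots,\theta_{n-1})$ with $\cut_{\{m+1,\dots,n-1\}}(\mu)_{\textbf{1}}=\com_{m+1}(x,y)$. The goal is then to verify that $\mu=\com_n(x,y)$; once this is established, one more application of Theorem \ref{thm:charhypercomm} yields $\langle x,y\rangle\in[\theta_0,\dots,\theta_{n-1}]_H$, as required.

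For the identification $\mu=\com_n(x,y)$, use the equivalent description of the $m$-nest: $\cut_{\{m,\dots,n-1\}}(\mu)$ is an $(n-m)$-dimensional commutator cube whose $\textbf{1}$-vertex carries a label $b\in A^{2^m}$ and whose other vertices all carry a common label $a\in A^{2^m}$. The relation $\cut_{\{m+1,\dots,n-1\}}(\mu)_{\textbf{1}}=\com_{m+1}(x,y)$ records the values of $\mu$ on the $(m+1)$-dimensional face where the last $n-m-1$ coordinates equal $1$; splitting this face according to the value of the $m$-th coordinate and combining with the outer commutator-cube pattern identifies $a$ with the constant cube of value $x$ and $b$ with $\com_m(x,y)$. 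Reassembling the two vertex labels into $\mu$ shows $\mu_f=y$ precisely when $f=\textbf{1}$ and $\mu_f=x$ otherwise, i.e., $\mu=\com_n(x,y)$.

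There is no real obstacle once Lemma \ref{lem:nest1} is in hand; the only care required is tracking the three different cuts that appear in the argument, namely $\cut_{\{m+1,\dots,n-1\}}$ (which encodes the hypothesis on $\mu$), $\cut_{\{m,\dots,n-1\}}$ (which encodes membership in $N$), and the ambient $2^n$-coordinate system (in which one finally recognises $\mu$ as $\com_n(x,y)$). The main difficulty of the theorem has thus already been isolated into Lemma \ref{lem:nest1}, whose proof is the technically demanding step of the development.
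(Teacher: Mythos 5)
Your proposal is correct and follows exactly the paper's route: apply Theorem \ref{thm:charhypercomm} to obtain $\com_{m+1}(x,y)\in\Delta(\theta_0,\dots,\theta_{m-1},[\theta_m,\dots,\theta_{n-1}]_H)$, lift it via Lemma \ref{lem:nest1} to an element of the $m$-nest, identify that element as $\com_n(x,y)$, and apply Theorem \ref{thm:charhypercomm} again. Your explicit verification that the nest element must equal $\com_n(x,y)$ (splitting on the $m$-th coordinate) correctly fills in a step the paper only asserts.
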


\begin{proof}
Suppose that $\langle x,y \rangle \in 
[\theta_0, \dots, \theta_{m-1}, [\theta_m, \dots, \theta_{n-1}]_H]_H$. Applying Theorem \ref{thm:charhypercomm} shows that 
\[
\com_{m+1}(x,y) \in \Delta(\theta_0, \dots, \theta_{m-1}, [\theta_m, \dots, \theta_{n-1}]_H).
\]
Lemma \ref{lem:nest1} allows us to conclude that there is $\gamma \in N(\theta_0, \dots, \theta_{n-1})$ such that 
$\cut_{\{m+1, \dots, n-1\}}(\gamma)_\textbf{1} = \com_{m+1}(x,y)$, and the definition of $N(\theta_0, \dots, \theta_{n-1})$ forces $\gamma = \com_n(x,y)$. Applying Theorem \ref{thm:charhypercomm} yet again shows that $\langle x,y \rangle \in [\theta_0, \dots, \theta_{n-1}]_H$, and the proof is finished. 
\end{proof}
We finish the section with a corollary and the theorem promised by the title. 
\begin{cor}\label{cor:diagHC8}
Let $n \geq 3$ and $m\in n-1$. If $\theta$ is a congruence of a Taylor algebra $\A$, then 
\[
[\underbrace{\theta, \dots, \theta}_{m},[\underbrace{\theta, \dots, \theta}_{n-m}]_{TC}]_{TC}\leq [\underbrace{\theta, \dots, \theta}_{n}]_{TC}.
\]
\end{cor}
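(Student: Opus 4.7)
The plan is to imitate the proof of Corollary \ref{cor:ternarynestingproperty} verbatim, simply raising the arities. Concretely, I will produce a single chain of four (in)equalities connecting the left-hand side to the right-hand side, each justified by a previously established result. Since $m \in n-1$ forces $n-m \geq 2$ (and the outer commutator has arity $m+1 \geq 2$ in the non-degenerate range), the requisite Taylor-variety diagonal identifications are all available.

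The chain is
\[
[\underbrace{\theta,\dots,\theta}_{m},[\underbrace{\theta,\dots,\theta}_{n-m}]_{TC}]_{TC}
\leq
[\underbrace{\theta,\dots,\theta}_{m},[\underbrace{\theta,\dots,\theta}_{n-m}]_{TC}]_{H}
=
[\underbrace{\theta,\dots,\theta}_{m},[\underbrace{\theta,\dots,\theta}_{n-m}]_{H}]_{H}
\leq
[\underbrace{\theta,\dots,\theta}_{n}]_{H}
=
[\underbrace{\theta,\dots,\theta}_{n}]_{TC}.
\]
The first bound is item \emph{(4)} of Theorem \ref{thm:basicpropertiescommutator}. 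The first equality uses Theorem \ref{thm:allarityH=TC} applied to the inner commutator (arity $n-m \geq 2$), replacing $[\theta,\dots,\theta]_{TC}$ with $[\theta,\dots,\theta]_H$. The second bound is the hypercommutator nesting inequality HHC8 (Theorem \ref{thm:HHC8}), applied with all $\theta_i = \theta$. The final equality is another invocation of Theorem \ref{thm:allarityH=TC}, this time for the full $n$-ary commutator.

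There is no real obstacle: all of the heavy lifting has been done. The only thing to check is that the arities make sense in the intended range. Taking $1 \leq m \leq n-2$ gives $2 \leq m+1 \leq n-1$ for the outer commutator and $2 \leq n-m \leq n-1$ for the inner one, so every bracket in the displayed chain is a commutator of arity at least two, and Theorems \ref{thm:basicpropertiescommutator}, \ref{thm:allarityH=TC}, and \ref{thm:HHC8} apply as written. The boundary value $m=0$ reduces the statement to the trivial identity $[\theta,\dots,\theta]_{TC} \leq [\theta,\dots,\theta]_{TC}$ and needs no separate argument.
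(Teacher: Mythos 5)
Your proof is correct and follows essentially the same route as the paper's: the paper uses the identical chain, merely compressing your first two steps (Theorem \ref{thm:basicpropertiescommutator}\emph{(4)} followed by the inner-argument identification via Theorem \ref{thm:allarityH=TC}) into a single inequality justified by Theorem \ref{thm:basicpropertiescommutator}, before applying Theorem \ref{thm:HHC8} and Theorem \ref{thm:allarityH=TC} exactly as you do.
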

\begin{proof}
The proof is the same as the proof of Corollary \ref{cor:ternarynestingproperty}. We observe that the following sequence of congruences is increasing:
\[
[\underbrace{\theta, \dots, \theta}_{m},[\underbrace{\theta, \dots, \theta}_{n-m}]_{TC}]_{TC}
\leq 
[\underbrace{\theta, \dots, \theta}_{m},[\underbrace{\theta, \dots, \theta}_{n-m}]_{H}]_{H}
\leq 
[\underbrace{\theta, \dots, \theta}_{n}]_H
= 
[\underbrace{\theta, \dots, \theta}_{n}]_{TC}.
\]
Indeed, the first bound is a consequence of Theorem \ref{thm:basicpropertiescommutator}, the second bound is a consequence of Theorem \ref{thm:HHC8}, and the third equality is a consequence of Theorem \ref{thm:allarityH=TC}.
\end{proof}
\begin{thm}\label{thm:supniltaylorarenil}
Supernilpotent Taylor algebras are nilpotent.
\end{thm}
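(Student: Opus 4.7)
The strategy is to deduce the theorem directly from Corollary~\ref{cor:diagHC8}. Let $\A$ be a Taylor algebra and write $1$ for its top congruence. Unpacking Subsection~\ref{subsec:nilpotenceandsupernilpotence}, the algebra $\A$ is supernilpotent when some $[\underbrace{1, \dots, 1}_{N}]_{TC}$ vanishes, while $\A$ is nilpotent when some iterate $(1]_k$ of the lower central series, defined by $(1]_0 = 1$ and $(1]_{j+1} = [1, (1]_j]_{TC}$, vanishes. My goal is therefore to bound iterated binary term-condition commutators by a single higher term-condition commutator of appropriate arity.

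The central step is the inequality
\[
(1]_k \leq [\underbrace{1, \dots, 1}_{k+1}]_{TC} \quad \text{for all } k \geq 1,
\]
which I would prove by induction on $k$. The base case $k=1$ is the definition of $(1]_1$. For the inductive step, assume the bound holds at $k$. Then monotonicity of the term-condition commutator in each coordinate (Theorem~\ref{thm:basicpropertiescommutator}(2)) yields
\[
(1]_{k+1} = [1, (1]_k]_{TC} \leq \bigl[1, [\underbrace{1, \dots, 1}_{k+1}]_{TC}\bigr]_{TC},
\]
and Corollary~\ref{cor:diagHC8}, applied with $m=1$ and arity $k+2$, collapses the nested expression on the right into $[\underbrace{1, \dots, 1}_{k+2}]_{TC}$, completing the step.

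With this inequality in hand, the theorem is immediate: if $\A$ is supernilpotent then $[\underbrace{1, \dots, 1}_{N}]_{TC} = 0$ for some $N$, which forces $(1]_{N-1} = 0$, so $\A$ is nilpotent. The genuine work is already packaged inside Corollary~\ref{cor:diagHC8}, which in turn draws on Theorem~\ref{thm:allarityH=TC} (identifying the term-condition and hyper-commutators on the diagonal in a Taylor algebra) and Theorem~\ref{thm:HHC8} (the HHC8 inequality for the hypercommutator). At this final stage no new combinatorial or structural difficulty arises; what might superficially look like an obstacle is only the bookkeeping needed to align the recursive definition of $(1]_k$ with the arities appearing in Corollary~\ref{cor:diagHC8}, and this is dispatched cleanly by the induction above.
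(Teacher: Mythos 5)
Your proposal is correct and follows essentially the same route as the paper: an induction establishing $(\theta]_k \leq [\theta,\dots,\theta]_{TC}$ (with $k+1$ arguments), using monotonicity from Theorem \ref{thm:basicpropertiescommutator} and the collapsing of the nested commutator via Corollary \ref{cor:diagHC8}. Your version specializes to $\theta = 1$, which suffices for the statement, and your indexing in the inductive step is handled cleanly.
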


\begin{proof}
Let $\A$ be a Taylor algebra and $\theta \in \Con(\A)$. We show that 

\[
(\theta]_n \leq [\underbrace{\theta, \dots, \theta}_{n+1}]_{TC}.
\]
We proceed inductively over the lower central series of $\A$. The basis is clear, because $\theta \leq \theta$. Suppose the bound holds for $n$. A consequence of Theorem \ref{thm:basicpropertiescommutator} together with Corollary \ref{cor:diagHC8} is that

\[
(\theta]_{n+1} = [\theta, (\theta]_n]_{TC} \leq  [\theta, [\underbrace{\theta, \dots, \theta}_{n}]_{TC}]_{TC}
\leq [\underbrace{\theta, \dots, \theta}_{n+1}]_{TC},
\]
and the result follows.

\end{proof}

\section{A characterization of congruence meet-semidistributivity}\label{sec:sdmeet}

A variety $\var$ is said to \textbf{congruence meet-semidistributive}, or $\sd$, if each of its congruence lattices satisfies the implication

\[
(\gamma \meet \alpha = \gamma \meet \beta)
\implies 
(\gamma \meet (\alpha \join \beta) = \gamma \meet \alpha).
\]
A variety $\var$ is said to be \textbf{congruence neutral} if 
\[
[\alpha, \beta]_{TC} = \alpha \meet \beta,
\]
for all algebras $\A \in \var$ and $\alpha, \beta \in \Con(\A)$.
It is well known that every $\sd$ variety is congruence neutral, and vice versa \cite{kearnesszendreirel}. 
Along these lines, let $n \geq 2$. We say that an operation $[\underbrace{\cdot, \dots, \cdot}_n]: \mathcal{L}^n \to \mathcal{L}$ on a lattice $\mathcal{L}$ is \emph{neutral} if
\[
[\theta_0, \dots, \theta_{n-1}] =  \Meet_{i\in n} \theta_i
\]
for all $(\theta_0, \dots, \theta_{n-1}) \in \mathcal{L}^n$.

\begin{prop}\label{prop:hypneutral=tcneutral}
Let $\A$ be an algebra and $n\geq 2$. The $(n)$-ary hypercommutator is neutral on $\Con(\A)$ if and only if the $(n)$-ary term condition commutator is neutral on $\Con(\A)$.
\end{prop}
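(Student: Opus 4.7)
The ($\Leftarrow$) direction is immediate: by items (1) and (4) of Theorem \ref{thm:basicpropertiescommutator} we have $[\theta_0,\dots,\theta_{n-1}]_{TC} \leq [\theta_0,\dots,\theta_{n-1}]_H \leq \bigwedge_i \theta_i$ for every $(\theta_0,\dots,\theta_{n-1}) \in \Con(\A)^n$, so if the outer terms coincide then so does the middle.

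For ($\Rightarrow$), my plan is first to reduce neutrality to idempotence on constant tuples. If $[\theta,\dots,\theta] = \theta$ for every $\theta$, then monotonicity (item (2) of Theorem \ref{thm:basicpropertiescommutator}) applied at $\bigwedge_i \theta_i \leq \theta_i$ gives $\bigwedge_i \theta_i = [\bigwedge_i \theta_i,\dots,\bigwedge_i \theta_i] \leq [\theta_0,\dots,\theta_{n-1}]$, which together with item (1) yields neutrality. The reduction is valid for both commutators, so the task becomes: if $[\theta,\dots,\theta]_H = \theta$ for every $\theta \in \Con(\A)$ then $[\theta,\dots,\theta]_{TC} = \theta$ for every $\theta \in \Con(\A)$.

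Fix $\theta$ and set $\delta = [\theta,\dots,\theta]_{TC} \leq \theta$; the goal is to show $\theta \leq \delta$. I would pass to the quotient algebra $\A/\delta$: there $M(\theta/\delta,\dots,\theta/\delta)$ has $(0,n-1)$-centrality by the definition of $\delta$, and the symmetry of the constant tuple under coordinate permutation lifts this to $(0,i)$-centrality for every $i \in n$. H-idempotence is inherited by $\A/\delta$, since the $n$-dimensional congruence machinery (generation, $\tc$, and the centrality conditions) descends to the quotient. Thus the problem reduces to proving the following: if an algebra has H idempotent on every congruence, and $\eta$ is a congruence with $M(\eta,\dots,\eta)$ having $(0,i)$-centrality for every $i \in n$, then $\eta = 0$.

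To finish, take $\langle x,y \rangle \in \eta$. H-idempotence together with Theorem \ref{thm:charhypercomm} gives $\com_n(x,y) \in \Delta(\eta,\dots,\eta) = \tc(M(\eta,\dots,\eta))$. I would then show that $\tc(M(\eta,\dots,\eta))$ inherits $(0,n-1)$-centrality from $M(\eta,\dots,\eta)$, which forces $\com_n(x,y)$ to be trivial, so $x=y$ and hence $\eta=0$. This centrality-preservation step is the crux of the argument: each $\circ_i$-closure operation in the construction of $\tc$ must be shown to preserve $(0,j)$-centrality in every coordinate $j$, by induction on the depth of the $\tc$-construction. It should substitute for the Taylor-term machinery of Propositions \ref{prop:perpstage} and \ref{prop:parallelstage}; the symmetry of the constant tuple together with the totality of $(0,i)$-centrality in every direction $i$ (rather than only the pivot direction) is what allows a gluing chain of $M$-cubes in one direction to be prevented from assembling a nontrivial commutator configuration in any other direction. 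The already-worked-out $n=2$ case is illustrative: from a chain $(u_0,v_0),\dots,(u_k,v_k)$ in $\faces_0(M)^\circ$ with $u_0=v_0$, iterated application of $(0,1)$-centrality of $M$ to each link propagates the equality $u_\ell = v_\ell$ through the chain, yielding $u_k=v_k$ and ruling out a non-trivial $(0,0)$-commutator square in $\tc(M)$; in general dimension the same principle is to be applied inductively, with each direction's centrality used to control the perpendicular gluing in another direction.
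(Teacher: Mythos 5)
Your ($\Leftarrow$) direction and your reduction of neutrality to idempotence at constant tuples are both correct, and the passage to $\A/\delta$ can be made to work. The fatal problem is the step you yourself flag as the crux: the claim that $\tc(M(\eta,\dots,\eta))$ inherits centrality from $M(\eta,\dots,\eta)$. Specialized to the zero congruence, that claim is exactly the content of Theorem \ref{thm:allarityH=TC} (equivalently, of Propositions \ref{prop:perpstage} and \ref{prop:parallelstage}); its proof is the entire Taylor-term machinery of Section \ref{sec:higherarities}, and it is not a fact about general algebras --- if centrality passed to the transitive closure unconditionally, the paper's main theorem would be vacuous, contradicting the existence of supernilpotent non-nilpotent algebras cited in the introduction. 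Your extra hypothesis of $H$-idempotence is never actually invoked in this step, so you are in effect asserting $\mathrm{H}=\mathrm{TC}$ at $0$ for an arbitrary algebra.

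Your $n=2$ illustration shows why the sketch cannot close. Propagating constancy link-by-link along a $\circ_0$-chain handles only the perpendicular direction: it shows $R^{\circ_0}$ has $(\delta,1)$-centrality when $R$ does. But $\tc$ alternates $\circ_0$ and $\circ_1$, so your induction on the depth of the $\tc$-construction also needs $(\delta,0)$-centrality of $R^{\circ_0}$ --- the parallel stage --- and that is precisely where Lemma \ref{lem:binaryh=tc} must invoke the Taylor identities $t(x,\phi(x,y))\approx t(y,\psi(x,y))$; no amount of symmetry of the constant tuple substitutes for them (and for $n\geq 3$ even the perpendicular stage uses rotations built from the Taylor term). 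The paper's own proof of the proposition is instead a short lattice computation, $\Meet_{i}\theta_i=[\theta_0,\dots,\theta_{n-1}]_H=[\Meet_i\theta_i,\dots,\Meet_i\theta_i]_H=[\Meet_i\theta_i,\dots,\Meet_i\theta_i]_{TC}\leq[\theta_0,\dots,\theta_{n-1}]_{TC}\leq\Meet_i\theta_i$, with the hard middle equality outsourced to Theorem \ref{thm:allarityH=TC} (which itself presupposes a Taylor term --- underscoring that the equality you need is not free). To repair your argument you must either cite that theorem as the paper does, or find a genuinely new way to exploit the $H$-idempotence hypothesis in the centrality-preservation step; the proposal as written does neither.
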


\begin{proof}

Suppose that $[\alpha_0, \dots, \alpha_{n-1}]_H = \Meet_{i \in n} \alpha_i$ for all $(\alpha_0, \dots, \alpha_{n-1}) \in \Con(\A)^n$. This assumption, along with Theorems \ref{thm:basicpropertiescommutator} and \ref{thm:allarityH=TC}, is used to produce the nondecreasing sequence of congruences
\begin{align*}
\Meet_{i \in n} \theta_i &= [\theta_0, \dots, \theta_{n-1}]_H
= [[\theta_0, \dots, \theta_{n-1}]_H, \dots, [\theta_0, \dots, \theta_{n-1}]_H]_H\\
&= \left[ \Meet_{i \in n} \theta_i, \dots, \Meet_{i \in n} \theta_i \right]_H 
= \left[ \Meet_{i \in n} \theta_i, \dots, \Meet_{i \in n} \theta_i \right]_{TC} \\
& \leq [ \theta_0, \dots, \theta_{n-1}]_{TC}\\
&\leq \Meet_{i \in n} \theta_i,
\end{align*}
which forces the $(n)$-ary term condition commutator to be neutral. The other direction is an obvious consequence of Theorem \ref{thm:basicpropertiescommutator}.
\end{proof}

We can now apply some of the theory developed in this article to extend the congruence neutral characterization of congruence meet semidistributive varieties.

\begin{thm}
Let $\var$ be a variety of algebras. The following conditions are equivalent:

\begin{enumerate}
\item $\var$ is $\sd$.
\item $\Delta(\alpha, \alpha) = \rect(\alpha, \alpha)$ for all congruences $\alpha$ of algebras $\A \in \var$.
\item $\Delta(\underbrace{\alpha, \dots, \alpha}_n) = \rect(\underbrace{\alpha, \dots, \alpha}_n)$ for every $n \geq 2$, for all congruences $\alpha$ of algebras $\A \in \var$.
\item $\Delta(\underbrace{\alpha, \dots, \alpha}_n) = \rect(\underbrace{\alpha, \dots, \alpha}_n)$ for some fixed $n \geq 2$, for all congruences $\alpha$ of algebras $\A \in \var$.
\item There exists $n\geq 2$ so that the $(n)$-ary hypercommutator is neutral across $\var$.
\item The binary hypercommutator is neutral across $\var$.
\end{enumerate}
\end{thm}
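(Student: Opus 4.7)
The plan is to close the cycle $(1) \Leftrightarrow (6) \Leftrightarrow (5)$ together with $(6) \Rightarrow (3) \Rightarrow (4) \Rightarrow (2) \Rightarrow (6)$. The equivalence $(1) \Leftrightarrow (6)$ is immediate from the cited characterization of $\sd$ as neutrality of the binary term condition commutator, combined with Proposition \ref{prop:hypneutral=tcneutral}. For $(6) \Leftrightarrow (5)$, the forward direction is trivial, and for the converse I assume $n$-ary hypercommutator neutrality and take arbitrary $\alpha, \beta$: setting all slots equal to $\alpha \meet \beta$ and iterating part (3) of Theorem \ref{thm:basicpropertiescommutator} gives
\[
\alpha \meet \beta = [\underbrace{\alpha \meet \beta, \dots, \alpha \meet \beta}_n]_H \leq [\alpha \meet \beta, \alpha \meet \beta]_H,
\]
and monotonicity (part (2)) together with the meet bound (part (1)) yields $[\alpha \meet \beta, \alpha \meet \beta]_H \leq [\alpha, \beta]_H \leq \alpha \meet \beta$, so $[\alpha, \beta]_H = \alpha \meet \beta$.

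Of the four $\Delta/\rect$ implications, three are easy. The step $(3) \Rightarrow (4)$ is trivial; for $(4) \Rightarrow (2)$, apply Lemma \ref{lem:deltaprojectstodelta} together with a direct verification that $\cut_Q$ sends $\rect(\alpha, \dots, \alpha)$ to the rectangle of the appropriately lower arity, since a line of a cut is a line of the original. For $(2) \Rightarrow (6)$, Proposition \ref{prop:binhypchar} shows that any $\langle x, y \rangle \in \alpha$ yields $\Square[x][x][x][y] \in \rect(\alpha, \alpha) = \Delta(\alpha, \alpha)$, hence $\langle x, y \rangle \in [\alpha, \alpha]_H$; the mixed case then follows from $\alpha \meet \beta = [\alpha \meet \beta, \alpha \meet \beta]_H \leq [\alpha, \beta]_H \leq \alpha \meet \beta$.

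The main step is $(6) \Rightarrow (3)$. From the established $(6) \Leftrightarrow (5)$, every $m$-ary hypercommutator is neutral, so by Theorem \ref{thm:charhypercomm} each cube whose labels are all some $x$ except for one $\alpha$-related $y$ placed at an arbitrary corner lies in $\Delta(\alpha, \dots, \alpha)$; these symmetric versions of $\com_m(x, y)$, together with the generators $\cube_i(\alpha)$, will be the building blocks. I prove $\rect_n \subseteq \Delta_n$ by induction on $n$. In the base case $n = 2$, an arbitrary $\Square[a][c][b][d] \in \rect(\alpha, \alpha)$ is exhibited inside $\Delta(\alpha, \alpha)$ by three face-transitive glueings: $\Square[a][a][b][d]$ is the coordinate-$0$ composite of $\Square[a][a][b][b] \in \cube_0(\alpha)$ with the commutator cube $\Square[b][b][b][d]$ along their shared column $\langle b, b \rangle$; $\Square[a][c][d][d]$ is the coordinate-$0$ composite of the symmetric commutator cube $\Square[a][c][a][a]$ with $\Square[a][a][d][d] \in \cube_0(\alpha)$ along $\langle a, a \rangle$; finally $\Square[a][c][b][d]$ arises as the coordinate-$1$ composite of these two cubes along the shared row $\langle a, d \rangle$.

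For the inductive step $n \to n+1$, the key observation is that any cube in $\Delta_n$ extends by constancy in the added coordinate to a cube in $\Delta_{n+1}$: the extension carries each generator $\cube_i(\alpha)$ to the corresponding $(n+1)$-dimensional generator and commutes with term operations, symmetries, reflexivities, and transitivities in the original coordinates. Given $\gamma \in \rect_{n+1}$, I start from the constant $(n+1)$-cube with value equal to the antipivot of $\gamma$ and change $\gamma$ one vertex at a time; each single-vertex modification from $x$ to an $\alpha$-related $y$ is implemented by a transitive glueing against an appropriate symmetric version of the $(n+1)$-dimensional commutator cube $\com_{n+1}(x, y)$, whose two faces in the glueing coordinate differ only at the targeted vertex. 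The main obstacle will be the bookkeeping in this inductive procedure: matching intermediate $n$-faces cleanly, verifying that each successive glueing is admissible, and ordering the sequence of modifications so that it terminates at $\gamma$ with all intermediate cubes remaining in $\Delta_{n+1}$.
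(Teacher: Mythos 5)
Most of your cycle is sound, but the load-bearing implication $(6)\Rightarrow(3)$ is circular. You open that step by asserting that ``every $m$-ary hypercommutator is neutral,'' citing the established equivalence $(6)\Leftrightarrow(5)$; but $(5)$ is an existential statement, and your proof of $(5)\Rightarrow(6)$ only uses the inequality $[\theta,\dots,\theta]_H\leq[\theta,\theta]_H$, which goes the wrong way for deducing $m$-ary neutrality from binary neutrality. The missing inequality $\alpha\leq[\alpha,\dots,\alpha]_H$ is, via Theorem \ref{thm:charhypercomm}, exactly the statement that $\com_m(x,y)\in\Delta(\alpha,\dots,\alpha)$ for every $\alpha$-pair $\langle x,y\rangle$ --- a special case of the inclusion $\rect\subseteq\Delta$ that you are trying to prove. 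The same circularity reappears inside your induction: the step from $n$ to $n+1$ glues against ``the $(n+1)$-dimensional commutator cube $\com_{n+1}(x,y)$,'' whose membership in $\Delta_{n+1}$ is an instance of the conclusion for $n+1$ and is not deliverable from the inductive hypothesis $\rect_n=\Delta_n$ (the obvious candidate, the doubled cube $\glue_{\{n\}}(\langle\com_n(x,y),\com_n(x,y)\rangle)$, fails to $(i)$-support $\langle x,y\rangle$ for any $i$ because one of its supporting lines is the non-constant pair $\langle x,y\rangle$). There is also the secondary problem you flag yourself: the single-vertex modification requires composing, via transitivity, a cube whose relevant face is constant with a face of the current cube that is generally not constant, so the gluing does not typecheck without a much more delicate ordering of the modifications.

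The paper's induction for the dimension-raising step sidesteps all of this with a different idea: it applies the already-established two-dimensional case not to $\A$ but to the algebra $\rect(\alpha,\dots,\alpha)$ on $n-2$ coordinates, equipped with the congruence $\zeta$ identifying two rectangles exactly when their pivots are $\alpha$-related; the inductive hypothesis in dimension $n-1$ shows that the generators $\cube_0(\zeta)$ and $\cube_1(\zeta)$ land inside $\cut_{\{0,1\}}(\Delta(\alpha,\dots,\alpha))$, and then $\Delta(\zeta,\zeta)=\rect(\zeta,\zeta)$ in the bigger algebra glues back to give $\rect(\alpha,\dots,\alpha)\subseteq\Delta(\alpha,\dots,\alpha)$ in dimension $n$. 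Your remaining implications ($(1)\Leftrightarrow(6)$, $(5)\Leftrightarrow(6)$, $(3)\Rightarrow(4)\Rightarrow(2)\Rightarrow(6)$) are correct, and $(4)\Rightarrow(2)$ via Lemma \ref{lem:deltaprojectstodelta} is a legitimate alternative to the paper's route through $(5)$, but the argument as a whole does not close without a non-circular proof of the step from dimension $2$ to dimension $n$.
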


\begin{proof}
We prove that \emph{(1)}$\implies \dots \implies$\emph{(6)}$\implies$\emph{(1)}. It is well known that the binary term condition commutator is neutral (see Corollary 4.7 of \cite{kearnesszendreirel}), so \ref{prop:hypneutral=tcneutral} indicates the binary hypercommutator is also neutral. Take $\A \in \var$ and $\alpha \in \Con(\A)$. Suppose that 
\[
\mu = \Square[a][c][b][d] \in \rect(\alpha, \alpha).
\]
We want to show that $\mu \in \Delta(\alpha, \alpha)$. We are assuming that $[\alpha, \alpha]_H = \alpha$, so Theorem \ref{thm:charhypercomm} indicates that the outer two squares of the sequence 
\[
\begin{tikzpicture}[scale=1]
	 \tikzstyle{vertex}=[circle,minimum size=10pt,inner sep=0pt]
	 \tikzstyle{selected vertex} = [vertex, fill=red!24]
	 \tikzstyle{selected edge} = [draw,line width=5pt,-,red!50]
	 \tikzstyle{edge} = [draw,-,black]
	 \node[vertex] (v0) at (0,0) {$a$};
	 \node[vertex] (v1) at (0,1) {$c$};
	 \node[vertex] (v2) at (1,0) {$a$};
	 \node[vertex] (v3) at (1,1) {$a$};
	 \node[vertex] (v4) at (2,0) {$b$};
	 \node[vertex] (v5) at (2,1) {$b$};
	 \node[vertex] (v6) at (3,0) {$b$};
	 \node[vertex] (v7) at (3,1) {$d$};

	 \draw[edge] (v0) -- (v1) -- (v3) -- (v2) -- (v0);
	 \draw[edge] (v4) -- (v5) -- (v7) -- (v6) -- (v4);
	 \draw[edge] (v3) -- (v5);
	 \draw[edge] (v2) -- (v4);

 \end{tikzpicture}
\]
belong to $\Delta(\alpha, \alpha)$. The middle square is a generator of $\Delta(\alpha, \alpha)$, so an application of $(2)$-transitivity finishes the proof that \emph{(1)} implies \emph{(2)}. 

Suppose that \emph{(2)} holds. Take $\A \in \var$ and $\alpha \in \Con(\A)$. We proceed by induction on $n\geq 2$. Suppose that 
\[
\Delta(\underbrace{\alpha, \dots, \alpha}_{n-1}) = 
\rect(\underbrace{\alpha, \dots, \alpha}_{n-1})
\]
follows from \emph{(2)}, for $n-1 \geq 2$. We show that this also holds for $n$.

First define the congruence 

\begin{align*}
\zeta &= \left\{ \langle a, b \rangle \in (\A^{2^{n\setminus 2}})^2:
a, b \in \rect(\{\alpha_i = \alpha\}_{i \in n\setminus 2} ) 
\text{ and }
\langle a_\textbf{1},  b_\textbf{1} \rangle \in \alpha 
\right\}\\
&\in \Con( \rect(\underbrace{\alpha, \dots, \alpha}_{n-2})) \text{ (up to a change of coordinates).}
\end{align*}
We claim that $\Delta(\zeta, \zeta) \subseteq \cut_{\{0,1\}}(\Delta(\underbrace{\alpha, \dots, \alpha}_n))$. Indeed, it suffices to show that 
$
\cube_0(\zeta) \union \cube_1(\zeta) \subseteq \cut_{\{0,1\}}(\Delta(\alpha, \dots, \alpha))
$. Lemma \ref{lem:deltaprojectstodelta} and the inductive assumption show that  
\[
\epsilon =\faces_0^0(\Delta(\underbrace{\alpha, \dots, \alpha}_n)) = \Delta(\{\alpha_i = \alpha\}_{n\setminus 1}) =
\rect(\{\alpha_i = \alpha\}_{n\setminus 1}).
\]
However, $\faces_1(\epsilon) = \zeta$, so 
$
\cube_0(\zeta) \subseteq \cut_{\{0,1\}}(\Delta(\alpha, \dots, \alpha)).
$
A similar argument shows that 
$
\cube_1(\zeta) \subseteq \cut_{\{0,1\}}(\Delta(\alpha, \dots, \alpha)).
$
We assume that \emph{(2)} holds, so $\Delta(\zeta, \zeta) = \rect(\zeta, \zeta)$. We have demonstrated that 
\[
\glue_{\{0,1\}}
\left(
\left\{
\Square[a][c][b][d]: a,b,c,d \text{ belong to the same $\zeta$-class }
\right\}
\right)
\subseteq \Delta(\underbrace{\alpha, \dots, \alpha}_n),
\]
or equivalently, that $\rect(\underbrace{\alpha, \dots, \alpha}_n) \subseteq \Delta(\underbrace{\alpha, \dots, \alpha}_n)$.

Obviously, \emph{(3)} implies \emph{(4)}. Suppose that \emph{(4)} holds. Let $\A \in \var$ and $(\theta_0, \dots, \theta_{n-1}) \in \Con(\A)^n$. Because

\[
\rect
\left(\Meet_{i \in n} \theta_i, \dots, \Meet_{i\in n} \theta_i
\right)
= 
\Delta
\left(\Meet _{i \in n} \theta_i, \dots, \Meet_{i\in n} \theta_i
\right)
\leq 
\Delta(\theta_0, \dots, \theta_{n-1}),
\]
it follows that $\com_n(a,b) \in \Delta(\theta_0,  \dots, \theta_{n-1})$ for all $\langle a,b \rangle \in \Meet_{i\in n} \theta_i$. In view of Theorem \ref{thm:charhypercomm}, this shows that $[\theta_0, \dots, \theta_{n-1}]_H = \Meet_{i\in n} \theta_i$. Therefore, \emph{(5)} holds.

The remaining implications are consequences of Theorem \ref{thm:basicpropertiescommutator} and Proposition \ref{prop:hypneutral=tcneutral}, respectively. 

\end{proof}

\section{Acknowledgments}
The author wishes to acknowledge Keith Kearnes and Alexander Wires for stimulating conversations regarding this topic.

\bibliographystyle{amsplain}   % {{{1
\bibliography{refs.bib}
\begin{center}
  \rule{0.61803\textwidth}{0.1ex}   % 1/(golden ratio)
\end{center}
\subjclass{MSC 08A40 (08A05, 08B05)}
\end{document}